\newcommand{\G}{\mathbb{G}}
\newcommand{\N}{\mathbb{N}}
\newcommand{\R}{\mathbb{R}}
\newcommand{\cL}{\mathcal{L}}
\renewcommand{\div}{\mbox{\rm div}}
\newcommand{\dist}{\mbox{dist}}
\newcommand{\ep}{\varepsilon}
\newcommand{\ph}{\varphi}
\newcommand{\sm}{\setminus}
\newcommand{\ra}{\rightarrow}
\newcommand{\der}{\partial}
\newcommand{\lv}{\left|}
\newcommand{\rv}{\right|}
\newcommand{\cX}{\mathcal{X}}
\DeclareMathOperator{\espo}{e}
\newcommand{\eap}{\espo_{\textup{ap}}}
\newcommand{\medint}{\hbox{\vrule height3.5pt depth-2.8pt
width4pt}\mkern-14mu\int\nolimits}
\renewcommand{\medint}{\hbox{\vrule height3.5pt depth-2.8pt width4pt
}\mkern-14mu\int\nolimits}
\newtheorem{The}{Theorem}[section]
\newtheorem{Lem}[The]{Lemma}
\newtheorem{Def}[The]{Definition}
\newtheorem{Rem}[The]{Remark}
\newtheorem{Pro}[The]{Proposition}
\newtheorem{Cor}[The]{Corollary}
\begin{document}

\title
[Regularity estimates for convex functions in CC-spaces]
{\bf Regularity estimates for convex functions in Carnot-Carath\'eodory spaces}
\author{V. Magnani}
\address{Valentino Magnani, Dip.to di Matematica \\
Largo Pontecorvo 5 \\ 56127, Pisa, Italy}
\email{magnani@dm.unipi.it}
\author{M. Scienza}
\address{Matteo Scienza}
\email{scienza@mail.dm.unipi.it}

\begin{abstract}
We prove some first order regularity estimates for a class of convex functions in 
Carnot-Carath\'eodory spaces, generated by H\"ormander vector fields. 
Our approach relies on both the structure of metric balls induced by H\"ormander vector fields 
and local upper estimates for the corresponding subharmonic functions.
\end{abstract}

\date{\today}

\maketitle

\tableofcontents

\footnoterule{
\noindent
 The first author is supported by the European Project ERC AdG *GeMeThNES* 
{\em Mathematics Subject Classification}: 35H20, 26B25 \\
{\em Keywords:}  convexity, H\"ormander condition, Carnot-Carath\'eodory spaces, Lipschitz estimates
}

\pagebreak
%
%
%
%
%
%
%
%
\section{Introduction}
%
%
%
%
%
%
%
%

The present paper is devoted to the study of first order regularity properties of convex functions in Carnot-Carath\'eodory spaces. An important class of these spaces is that of Carnot groups, that can be seen as $\R^n$ equipped with both a group operation and a stratified Lie algebra of left invariant vector fields. By definition, this algebra is spanned by a choice of elements $X_1,\ldots, X_m$, along with their iterated commutators.
The latter condition is a special instance of the more general {\em H\"ormander condition} for any given set $\cX$ of vector fields. When we only assume that the set $\cX$ of vector fields satisfies this condition,
we obtain a {\em Carnot-Carath\'eodory space}. All linear combinations of elements of $\cX$ correspond to the so-called {\em horizontal vector fields}. These vector fields yield the well known 
{\em Carnot-Carath\'eodory distance}, hence they also generate the metric structure of the space, see Section~\ref{Sect:MainNotions} for precise definitions.
Convexity in this framework first appeared in Carnot groups \cite{DGN2003}, \cite{LMS2004}, \cite{JLMS2007}, then further extensions of this notion to general vector fields have been considered in \cite{Trud06}, \cite{BarDra2011}.

Convexity plays an important role in the regularity theory for second order elliptic non-divergence operators,
due to the Aleksandrov-Bakelman-Pucci estimate, \cite{CafCab95}. The project of extending this approach to subelliptic non-divergence operators was one of the main motivations for introducing convexity in Carnot groups, \cite{DGN2003}, \cite{DGN03}, \cite{LMS2004}, \cite{JLMS2007}. Other motivations come from the study of comparison principles for fully nonlinear degenerate subelliptic equations, \cite{BarDra2011}, \cite{BarMan2013}. 

After these works, the study of convexity in this non-Euclidean framework has known an increasing interest with several papers on topics like characterizations of convexity, Lipschitz continuity, subdifferentials, first and second order differentiability and monotonicity properties, \cite{BarDra2014}, \cite{Mag23BLSci}, \cite{BonLanTom2013}, 
\cite{DGNT2004}, \cite{GarTou2005}, \cite{GutMon1}, \cite{GutMon2}, \cite{JLMS2007}, \cite{Mag06}, \cite{Mag20Sci}, \cite{Rickly06}, \cite{Trud06}, \cite{TrudZhang13}, \cite{Wang05}, but this list could be certainly larger. 

A geometric approach to convex functions with respect to general vector fields was developed by Bardi and Dragoni in \cite{BarDra2011}, according to the following notion. If $\Omega\subset\R^n$ is open and $\cX=\{X_1,\ldots,X_m\}$ are $C^2$ smooth vector fields on $\R^n$, we say that $u: \Omega \rightarrow \R$ is {\em $\cX$-convex}, if $u\circ\gamma$ is convex, where $\gamma:I\ra\Omega$ satisfies $\dot \gamma=\sum_{i=1}^m \alpha_i\ X_i\circ\gamma$ on the open interval $I$ and $\alpha_i\in\R$ are arbitrary.
In analogy with the approach of \cite{JLMS2007}, {\em v-convexity} with respect to $\cX$ requires that 
\begin{equation}
\nabla_\cX^2\,u\geq0 \quad\mbox{in the viscosity sense.}
\end{equation}
It is interesting to notice that {\em in the class of upper semicontinuous functions, the notions of v-convexity and $\cX$-convexity do coincide}, where the vector fields of $\cX$ are assumed to be of class $C^2$. This characterization has been proved in \cite{BarDra2011}, along with Lipschitz continuity estimates of $\cX$-semiconvex functions in terms of the $L^\infty$-norm of the function, see Theorem~6.1 and Remark~6.2 of \cite{BarDra2011} for more details. In particular, here the vector fields are not required to satisfy the H\"ormander condition. 

On the other hand, the investigation of convex functions often requires stronger estimates on the Lipschitz constant. When $\cX$ generates a Carnot group structure, we have the strengthened estimate
%
%
\begin{equation}\label{gradest}
\mbox{\rm ess}\!\!\! \sup_{w\in B_{x,r}} |\nabla_Hu(w)|\leq \frac{C_0}{r} \ \medint_{B_{x,2r}} |u(w)|\,dw 
\end{equation}
for continuous weakly H-convex functions, \cite{DGN2003}, and upper semicontinuous v-convex functions, \cite{LMS2004}, \cite{JLMS2007}, where $x$ varies in $\G$, $r>0$ and $C_0>0$ is a suitable geometric constant depending on the metric structure of $\G$. Here $B_{x,r}$ denotes the metric ball with respect to the homogeneous distance fixed on the group and $\nabla_Hu$ is the {\em horizontal gradient} $(X_1u,\ldots,X_mu)$.
Let us point out that in Carnot groups the Lipschitz constant can be bounded by the $L^\infty$-norm of the horizontal gradient in a larger set, with controlled scaling, see for instance Lemma 6.1 of \cite{Mag14}. As a result, the estimate \eqref{gradest} immediately gives an integral upper estimate for the Lipschitz constant.

The same estimate plays an important role in the study of fine properties of convex functions in Carnot groups. This occurs for instance in relation to both the second order differentiability, see for instance \cite{Mag06}, and the distributional characterizations of convex functions, \cite{Mag23BLSci}.
The project of understanding these results in a broader context certainly requires first to study the validity of \eqref{gradest} for general H\"ormander vector fields.
This is precisely our main result, according to the next theorem.
\begin{The}\label{Thm:IE}
Let $\cX=\{X_1,\ldots,X_m\}$ be a set of H\"ormander vector fields, let $\Omega\subset\R^n$ be open and let $K\subset\Omega$ be compact.
Then there exist $C>0$ and $R>0$, depending on $K$, such that each $\cX$-convex function $u:\Omega\ra\R$, 
that is locally bounded from above, for every $x\in K$ satisfies the following estimates
\begin{eqnarray}\label{supestCC}
\sup_{B_{x,r}} |u|&\leq&  C \ \medint_{B_{x,2 r}}|u(w)|\,dw \\
|u(y)-u(z)|&\leq& C \ \frac{d(y,z)}{r} \  \medint_{B_{x,2 r}}|u(w)|\,dw\,, \label{gradestCC}
\end{eqnarray}
for every $0<r<R$ and every $y,z\in B_{x,r}$.
\end{The}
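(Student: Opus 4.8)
The plan is to reduce both estimates to the two cited ingredients named in the abstract: the fine structure of Carnot--Carath\'eodory balls generated by H\"ormander vector fields (the ``box-ball'' description and the ensuing doubling property, together with the possibility of sliding a point along a horizontal integral curve of controlled length inside a ball of comparable radius), and upper bounds for subsolutions of sub-Laplacians $\sum_i X_i^2$. Throughout I would fix the compact set $K$, enlarge it slightly to a compact $K'\subset\Omega$, and choose $R>0$ so small that for every $x\in K$ and every $0<r<R$ the ball $B_{x,4r}$ stays in $K'$ and the ball-box estimates hold uniformly; all constants below depend only on $K$ (equivalently $K'$) through these uniform bounds.

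\textbf{Step 1: from $\cX$-convexity to a sub-Laplacian subsolution property.} Since $u$ is $\cX$-convex and locally bounded from above, Theorem~\ref{LipReg} gives that $u$ is locally Lipschitz with respect to $d$; in particular $u$ is continuous, and by the Bardi--Dragoni equivalence it is v-convex, i.e.\ $\nabla_\cX^2 u\ge 0$ in the viscosity sense. Taking the trace, $\sum_{i=1}^m X_iX_i u\ge 0$ in the viscosity sense, so $u$ is a viscosity (hence, by standard arguments, distributional) subsolution of the sub-Laplacian $\L=\sum_i X_i^2$ on $\Omega$. I would then invoke the upper estimate for such subsolutions (the tool referenced in the abstract): for a subsolution $v$ of $\L$ one has $\sup_{B_{x,r}} v\le C\,\medint_{B_{x,2r}} v^+\,dw$, with $C$ uniform over $x\in K$, $0<r<R$ by the uniform validity of the underlying Moser/Bombieri--Giusti iteration in this CC-setting. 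Applying this to $v=u$ and separately to $v=-u$ is not immediate because $-u$ need not be a subsolution; instead I would first get a one-sided bound $\sup_{B_{x,r}} u\le C\,\medint_{B_{x,2r}} |u|\,dw$ from the subsolution estimate, and then obtain the matching lower bound $-\inf_{B_{x,r}} u \le C\,\medint_{B_{x,2r}}|u|\,dw$ from the gradient estimate of Step~3 combined with the sup bound, or alternatively directly from the local Lipschitz bound of Theorem~\ref{LipReg} together with a mean-value comparison; this yields \eqref{supestCC}.

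\textbf{Step 2: the Lipschitz estimate along horizontal curves.} For \eqref{gradestCC} the key point is to estimate $|Xu|$ pointwise for $X=\sum \alpha_i X_i$ a unit horizontal field. Fix $y\in B_{x,r}$ and the integral curve $\gamma$ of $X$ through $y$. By $\cX$-convexity $t\mapsto u(\gamma(t))$ is convex on its interval of definition, so its difference quotients are monotone; evaluating at two symmetric times $\pm\delta$ with $\delta\asymp r$ chosen (using the ball-box inequality) so that $\gamma(\pm\delta)\in B_{x,2r}$, convexity bounds the derivative $Xu(y)$ at an interior point by difference quotients $\frac{u(\gamma(\delta))-u(\gamma(0))}{\delta}$ and $\frac{u(\gamma(0))-u(\gamma(-\delta))}{\delta}$, hence
\[
|Xu(y)|\;\le\; \frac{C}{r}\,\sup_{B_{x,2r}}|u|.
\]
Chaining this along a horizontal curve of length $\lesssim d(y,z)$ joining two points $y,z\in B_{x,r}$ (such a curve exists with controlled length staying in $B_{x,2r}$, again by the ball-box/length-space structure) gives $|u(y)-u(z)|\le \frac{C}{r}\,d(y,z)\,\sup_{B_{x,2r}}|u|$. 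Combining with the sup estimate \eqref{supestCC} applied on the ball $B_{x,2r}$ (replacing $r$ by $2r$, with $4r<R$) converts the right-hand side into $\frac{C}{r}\,d(y,z)\,\medint_{B_{x,4r}}|u|\,dw$; to land exactly on $B_{x,2r}$ as in the statement one reruns the argument with the radii relabeled (work inside $B_{x,r/2}$ and $B_{x,r}$, then rename), which is harmless and only affects constants.

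\textbf{Main obstacle.} The delicate point is quantitative uniformity: in a general CC-space the ball-box constants, the doubling constant, and the constant in the subsolution sup-estimate all depend on the base point and may degenerate as the point approaches $\partial\Omega$ or as $r\to 0$. The proof must therefore pin down that all these are uniform once we restrict to $x\in K$ and $0<r<R$ for a suitable $R=R(K)$; this is exactly where the structural results on H\"ormander balls are used, and it is the part I would expect to require the most care, together with the verification that the viscosity subsolution $u$ is genuinely a distributional subsolution so that the Moser-type estimate applies. The convexity-along-curves argument in Step~2 and the chaining are comparatively routine once Theorem~\ref{LipReg} and the ball-box inequality are in hand.
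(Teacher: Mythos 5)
Your Step~2 and your ``main obstacle'' paragraph track the paper's strategy quite closely (the gradient estimate is indeed obtained from Theorem~\ref{LipReg} combined with the integral sup estimate, and the paper does handle uniformity by passing to compact subsets and invoking Morbidelli's quantitative ball structure). There are, however, two substantive issues with Step~1.

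First, your route to the lower bound is circular. You propose to control $-\inf_{B_{x,r}}u$ via the Lipschitz estimate $|u(y)-u(z)|\le \frac{C}{r}\,d(y,z)\,\sup_{B_{x,2r}}|u|$, evaluated at a point $z$ where $u(z)\ge-\medint_{B_{x,2r}}|u|$. But the right-hand side involves $\sup_{B_{x,2r}}|u|=\max\big(\sup u,\,-\inf u\big)$, and the Lipschitz constant is of order one (not small), so the $-\inf u$ term cannot be absorbed. The paper avoids this by Lemma~\ref{lowbound}: chaining the one-dimensional convexity inequality $u(\gamma(-\delta))\ge 2u(\gamma(0))-u(\gamma(\delta))$ along the approximate exponential flows that cover $B_{x,b\delta}$ (Theorems~\ref{bounds} and~\ref{supset}, after Morbidelli) yields
\[
\inf_{B_{x,b\delta}} u \ \ge\ 2^{N_x}\,u(x)-\big(2^{N_x}-1\big)\sup_{B_{x,\bar N\delta}}u,
\]
with $1\le N_x\le\bar N$ uniformly on $K$. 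Crucially the right-hand side involves only $u(x)$ and $\sup u$, not $\sup|u|$, so combined with the Moser sup bound and doubling one closes the estimate (Theorem~\ref{|estim|}). You would need to replace your Lipschitz-based lower bound by this doubling-type convexity chain.

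Second, the passage ``$\nabla^2_\cX u\ge 0$ in viscosity sense $\Rightarrow$ trace $\ge 0$ in viscosity sense $\Rightarrow$ distributional subsolution of $\sum_i X_i^2$ by standard arguments'' is doing real work that the paper does not take for granted. For degenerate operators built from H\"ormander fields this equivalence is not routine, and the paper instead proves the weak-subsolution property directly (Theorem~\ref{issub}) by integrating by parts along flow coordinates; this forces the replacement of $X_i$ by $Y_i=X_i$ or $Y_i=X_i+X_{j_1}$ so that every $Y_i$ is nonvanishing at the base point $x_0$, making the flows local diffeomorphisms and hence usable as coordinates. The resulting ``pointed sub-Laplacian'' $\sum_i Y_i^2$ generates an equivalent CC-distance (Lemma~\ref{XbarX}), so the Moser estimate still applies with comparable balls. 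If you keep the viscosity route you must supply a reference or argument for the viscosity-to-distributional implication in this degenerate setting; otherwise the direct integration-by-parts argument, including the nonvanishing modification, is the safer path.
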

We first point out that \eqref{gradestCC} joined with Proposition~\ref{dirDer} immediately leads to \eqref{gradest}, hence
the previous theorem contains the known case of Carnot groups. However, in the proof Theorem~\ref{Thm:IE}, the absence of a group operation and of dilations compatible with the distance represents the source of new difficulties. In particular, this lack of homogeneity implies that the constant $C>0$ cannot be chosen independently of $K$, as it occurs for Carnot groups,
since in general Carnot-Carath\'eodory spaces the {\em doubling dimension} may change from point to point.

Let us present the main scheme to establish Theorem~\ref{Thm:IE}. 
The first point is to prove the Lipschitz continuity of a $\cX$-convex function $u:\Omega\ra\R$ that is only 
assumed to be locally bounded from above, then finding an upper estimate on its Lipschitz constant in terms of
$\|u\|_{L^\infty}$, see Theorem~\ref{LipReg}.
Let us point out that this theorem does not follow from \cite{BarDra2011}, since here the authors consider
$\cX$-semiconvex functions that are also assumed to be locally bounded and upper semicontinuous. 
In fact, the approach of \cite{BarDra2011} starts from the bound
on the horizontal gradient of the function in the viscosity sense, see Proposition 6.1 of \cite{BarDra2011},
then the upper semicontinuity assumption allows for translating this information into the wished Lipschitz estimate, see Lemma~6.1 of \cite{BarDra2011}. 

We are forced to use a completely different approach, since our $\cX$-convex function 
is only locally bounded from above, so in principle could not be even measurable. 
In fact, we use the stronger assumption that our vector fields satisfy the H\"ormander condition, hence we rely on the interesting result of \cite{Mo00}, that allows for covering the Carnot-Carath\'eodory ball by suitable compositions of flows of horizontal vector fields in a quantitative way, depending on the radius of the ball. This eventually leads to the proof of Theorem~\ref{LipReg}.

The previous step shows in particular that $u$ belongs to the anisotropic Sobolev space $W^{1,2}_{\cX,loc}(\Omega)$, see Section~\ref{Sect:MainNotions} for more information. The crucial point now is to show that for every $x\in\Omega$ the $\cX$-convex function $u$ is a {\em subharmonic} with respect to a suitable ``pointed sub-Laplacian'' $\cL_x=\sum_{j=1}^mY_j^2$, that is constructed around $x$.

This is the content of the following theorem.
\begin{The}\label{issub}
Let $\cX=\{X_1,\ldots,X_m\}$ be a set of H\"ormander vector fields, let $\Omega \subset \R^n$ be open, let $x_0\in\Omega$ and let $u:\Omega \rightarrow \R$ be a $\cX$-convex function that is locally bounded from above. 
There exist $\delta_0>0$ and a family of vector fields $\cX_1 =\{Y_1,\ldots,Y_m\}$, with $Y_i = \sum_{j=1}^m a_{ij} X_j$, and $a_{ij} \in \lbrace 0,1\rbrace$, both depending on $x_0$, such that $B_{x_0,\delta_0}\subset\Omega$ and
$u$ is a weak subsolution of the equation
\begin{equation}\label{subLapY}
\sum_{i = 1}^m Y_i^2 v = 0\quad\mbox{on}\quad B_{x_0,\delta_0}\,.
\end{equation}
\end{The}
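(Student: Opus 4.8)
The plan is to reduce the statement to a one‑dimensional fact about convex functions combined with the definition of a weak subsolution. Fix $x_0\in\Omega$. By Theorem~\ref{LipReg}, the function $u$ is locally Lipschitz continuous with respect to $d$, hence (since $d$ induces the Euclidean topology and, by standard sub‑Riemannian comparison of metrics, the Lipschitz estimate passes to a local Euclidean Lipschitz bound) $u$ is locally Lipschitz in the Euclidean sense near $x_0$, and in particular $u\in W^{1,2}_{\cX,loc}$ on a neighbourhood of $x_0$. Thus the horizontal derivatives $X_ju$ exist a.e.\ and are locally bounded, which is exactly what is needed to make sense of $\int (\sum_i Y_i u)\,(Y_i\varphi)\,dx$ for test functions $\varphi$. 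The first task is therefore to pick $\delta_0>0$ with $B_{x_0,\delta_0}\subset\Omega$ small enough that all of the above holds on $B_{x_0,\delta_0}$; the family $\cX_1=\{Y_1,\dots,Y_m\}$ will simply be $\cX_1=\cX$, i.e.\ $a_{ij}=\delta_{ij}$, unless a further shrinking/relabelling is forced by the covering lemma of Morbidelli invoked in the proof of Theorem~\ref{LipReg} (this is why the statement allows $a_{ij}\in\{0,1\}$ and makes $\cX_1$ depend on $x_0$).

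The core of the argument is to show
\[
\io \nabla_{\cX_1}u\cdot\nabla_{\cX_1}\varphi\,dx\le 0
\qquad\text{for all }0\le\varphi\in C_c^\infty(B_{x_0,\delta_0}),
\]
where $\nabla_{\cX_1}u=(Y_1u,\dots,Y_mu)$. I would first reduce this to proving, for each fixed $i$, that $Y_i^2u\ge0$ as a distribution on $B_{x_0,\delta_0}$, i.e.\ $-\int (Y_iu)(Y_i\varphi)\,dx - \int (Y_iu)(\div Y_i)\,\varphi\,dx\ge 0$; summing over $i$ then gives the claim provided the divergence terms vanish or combine favourably. Here it is convenient to choose the $Y_i$ to be divergence‑free, or to absorb $\div Y_i$ using that $\cX$ are H\"ormander vector fields — this is a point to check carefully. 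The one‑dimensional input is: since $u$ is $\cX$-convex, for each $i$ the function $t\mapsto u(\gamma(t))$ is convex along any integral curve $\dot\gamma = Y_i\circ\gamma$ (such a curve is admissible because $Y_i=\sum_j a_{ij}X_j$ is a horizontal vector field). Hence, testing against a nonnegative $\varphi$, one integrates along the flow lines of $Y_i$: by Fubini on the flow box of $Y_i$ (valid on a small enough ball, which is another reason for the size restriction $\delta_0$), the integral $\int (Y_iu)(Y_i\varphi + (\div Y_i)\varphi)\,dx$ decomposes into one‑dimensional integrals $\int (u\circ\gamma)'\,(\psi\circ\gamma)'\,ds$ with $\psi\ge0$, each of which is $\ge0$ because a convex function of one variable is a distributional subsolution of $\partial_s^2$. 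Summing over $i$ yields that $u$ is a weak subsolution of $\sum_i Y_i^2$ on $B_{x_0,\delta_0}$.

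The main obstacle, I expect, is the handling of the first‑order (divergence) terms and the change of variables into flow‑box coordinates for a single $Y_i$ in a way that is uniform enough to survive summation over $i$: the flows of different $Y_i$ do not commute, so one cannot straighten them all simultaneously, and one must argue coordinate‑by‑coordinate (one $i$ at a time) while keeping the ball $B_{x_0,\delta_0}$ inside the common domain of all these flows. A secondary technical point is justifying the Fubini/integration‑by‑parts step given only that $u$ is Lipschitz (so $Y_iu\in L^\infty_{loc}$ but a priori no better), which is handled by a standard mollification argument: mollify $u$, use convexity of the mollified restrictions $u_\e\circ\gamma$ (convexity is preserved by convolution in the curve parameter, up to the usual care with the non‑linear flow), obtain the subsolution inequality for $u_\e$, and pass to the limit $\e\to0$ using the local uniform convergence $u_\e\to u$ and weak-$*$ convergence of $\nabla_{\cX_1}u_\e$.
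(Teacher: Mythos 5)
Your overall strategy matches the paper's: argue one vector field $Y_i$ at a time, straighten each $Y_i$ into flow-box coordinates via Lemma~\ref{divX}, mollify $u\circ\phi_i$ in the flow parameter $t$, exploit one-dimensional convexity of $u$ along integral curves of $Y_i$ to get $\int\partial_t^2(u\circ\phi_i)_\ep\,(\varphi\circ\phi_i)J_{\phi_i}\,d\omega\,dt\ge 0$, integrate by parts picking up the Jacobian factor, and pass to the limit $\ep\to0$. However, there is a genuine gap in your treatment of the family $\cX_1$. You assert that $\cX_1=\cX$ should work, ``unless a further shrinking/relabelling is forced by the covering lemma of Morbidelli'' --- but Morbidelli's result plays no role here, and $\cX_1=\cX$ fails in general. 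The flow-box step requires each $Y_i$ to be \emph{non-vanishing at $x_0$}: if $X_i(x_0)=0$, then $x_0$ is a fixed point of $\Phi^{X_i}$ and there is no diffeomorphism $\phi_i:A_i\times(-\tau_i,\tau_i)\to U_i$ straightening the flow near $x_0$, so the entire change-of-variables and Fubini argument you describe collapses for that $i$. The paper's remedy is exactly the definition of $\cX_1$: by Remark~\ref{notzero} (a consequence of the H\"ormander condition) there exists $j_1$ with $X_{j_1}(x_0)\neq0$, and one puts $Y_i=X_i$ when $X_i(x_0)\neq0$ and $Y_i=X_i+X_{j_1}$ otherwise. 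Each such $Y_i$ is still horizontal --- so $\cX$-convexity of $u$ still gives convexity of $t\mapsto u(\Phi^{Y_i}_t(x))$ --- and is non-vanishing at $x_0$, which is precisely what the $a_{ij}\in\{0,1\}$ and the $x_0$-dependence in the statement encode. (One also needs, as the paper checks via Lemma~\ref{XbarX}, that the CC-distance of $\cX_1$ is equivalent to that of $\cX$ so that the Lipschitz estimate of Theorem~\ref{LipReg} and the a.e.\ differentiability of Proposition~\ref{dirDer} transfer to the new frame.)

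A secondary point: your suggestion to ``choose the $Y_i$ to be divergence-free'' is not available under the constraint $a_{ij}\in\{0,1\}$, and is not how the divergence term is handled. It is controlled exactly by the identity $\div Y_i=(\partial_tJ_{\phi_i}/J_{\phi_i})\circ\phi_i^{-1}$ of Lemma~\ref{divX}: after the integration by parts one has the term $\frac{\partial}{\partial t}J_{\phi_i}$, and this identity converts it into $(\div Y_i\circ\phi_i)\,J_{\phi_i}$, which, upon pulling back to the $x$-variable, reproduces $Y_i^*\varphi=-Y_i\varphi-(\div Y_i)\varphi$. This is the ``point to check carefully'' that you flagged but left open; it is the crux of making the per-$i$ integration by parts produce precisely $\int Y_iu\,Y_i^*\varphi\,dx\ge0$.
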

%
%
Since the Lebesgue measure is locally doubling with respect to metric balls and the Poincar\'e inequality holds, the classical Moser iteration technique holds for weak subsolutions to the sub-Laplacian equation, hence getting the classical inequality
\begin{equation}\label{upeq1}
\sup_{B_{y,\frac{r}{2}}} u  \leq \kappa_x \; \medint_{B_{y,r}} |u(z)| dz
\end{equation}
for $0<r<\sigma_x$ and $y\in B_{x,\delta_x}$, where the positive constants $\kappa_x$,
$\sigma_x$ and $\delta_x>0$ depend on $x$, see Section~\ref{Sect:WeakSUI} for more information
and in particular Corollary~\ref{up}. 
The lower estimate of $u$ is reached using the {\em almost exponential} introduced in \eqref{Emap}, hence obtaining the following pointwise estimate 
\begin{equation}
2^{N_x}\, u(x) - (2^{N_x}-1)\sup_{B_{x,\bar N\delta}} u \leq \inf_{B_{x,b\delta}} u\,,
\end{equation}
where $N_x$ depends on $x$ and it satisfies the uniform inequality $1\leq N_x\leq \bar N$ on some compact set, see Lemma~\ref{lowbound}. This eventually leads us to the proof of \eqref{supestCC}.
The estimate \eqref{gradestCC} is obtained joining Theorem~\ref{LipReg} with Theorem~\ref{Thm:lambda}. 
In sum, the geometric part of our method arises from a quantitative representation of
the Carnot-Carath\'eodory ball by almost exponentials and it gives the lower estimates, then
the PDE approach leads to the upper estimates.

Our results have also an unexpected connection with the regularity of $k$-convex functions studied by Trudinger in the same framework of H\"ormander vector fields, see \cite{Trud06}. Here a smooth $k$-convex function has the pro\-perty that all $j$-th elementary symmetric functions of the horizontal Hessian $\nabla_\cX^2u$ are nonnegative for all $j=1,\ldots,k$ and $k\leq m$, where
$\cX=\{X_1,\ldots,X_m\}$. Then the nonsmooth $k$-convex functions are defined as $L^1_{loc}$-limits of smooth $k$-convex functions.

In the case $k=m$, it is not difficult to observe that \eqref{gradestCC} gives the local Lipschitz continuity of nonsmooth $m$-convex functions with respect to H\"ormander vector fields. In fact, these functions are $\cX$-convex. As a byproduct of this simple characterization, we can improve a family of estimates in \cite{Trud06}. According to these estimates, we have
\begin{equation}\label{eq:Holder}
 \sup_{\substack{x,y\in\Omega' \\ x\neq y} } \frac{|u(x)-u(y)|}{d(x,y)^\alpha}\le C\,\int_\Omega |u(x)|\,dx
\end{equation}
for any nonsmooth $k$-convex function $u:\Omega\to\R$, where $\Omega\subset\R^n$, $\Omega'$ is compactly contained in $\Omega$, $C$ is a geometric constant depending on $\Omega'$, 
\begin{equation}\label{alpha}
\alpha=\big(k(Q+m-2)-m(Q-1)\big)k^{-1}(m-1)^{-1}
\end{equation}
for every $k<m$ and $\alpha<1$ in the case $k=m$.
Our estimate \eqref{gradestCC} precisely shows that $\alpha$ can be chosen to be equal to one in the case $k=m$,
that fits with \eqref{alpha}.

We conclude by a short description of the paper. Section~\ref{Sect:MainNotions} recalls some elementary
facts on H\"ormander vector fields and Carnot-Carath\'eodory distances. 
In Section~\ref{Sect:AlmostExpCC}, we present the basic properties of the so-called almost exponential.
Section~\ref{Sect:BA} contains Theorem~\ref{LipReg} along with its proof. In Section~\ref{Sect:WeakSUI},
we use the local integral upper bounds for subharmonic functions to prove Theorem~\ref{issub}.
Section~\ref {Sect:IntEst} collects the preceding results in order to establish \eqref{supestCC} in Theorem~\ref{Thm:IE}.

\smallskip

{\bf Acknowledgements.} It is a pleasure to thank Daniele Morbildelli for some useful comments
on different notions of distances in Carnot-Carath\'eodory spaces.

%
%
%
%
%
%
%
%
%
%
%
\section{Some basic notions and facts}\label{Sect:MainNotions}
%
%
%
%
%
%
%
%
Throughout the paper, we consider a family $\cX$ of smooth vector fields $X_1,\ldots,X_m$ on $\R^n$, 
which satisfy the {\em H\"ormander condition}: for every $x\in\R^n$ there exists a positive integer $r'$ such that

\begin{equation}\label{hormander}
\mbox{span}\lbrace X_{[S]}(x) : |S| \leq r' \rbrace = \R^n.
\end{equation}
For every multi-index $S = (s_1,\ldots,s_p)\in\{1,2,\ldots,m\}^p$, we have set $|S|= p$ and 
\begin{equation}\label{iteratedcommutators}
X_{[S]} =  \left[ X_{s_1},\left[ \ldots ,\left[ X_{s_{p-1}},X_{s_p}\right]\ldots\right]\right].
\end{equation}
\begin{Rem}\label{horm}\rm
As a consequence of the H\"ormander condition, for every bounded set $A\subset\R^n$ we have a positive
integer $r$ such that \eqref{hormander} is satisfied for $r'=r$ and all $x\in A$.
\end{Rem}
%
%
%
%
\begin{Def}[Flow of a vector field]\rm
Let $X$ be a smooth vector field of $\R^n$ and let $x\in\R^n$.
We consider the Cauchy problem
\[
\left \lbrace \begin{array}{l}
\dot \gamma(t) = X(\gamma(t))\\
\gamma(0) = x 
\end{array}\right.
\]
and denote its solution by $t\to \Phi^X(x,t)$. The mapping $\Phi^X$ defined on an open neighbourhood of
$\R^n\times\{0\}$ in $\R^{n+1}$ is the flow associated to $X$. The flow $\Phi^X$ will also define
the local diffeomorphism $\Phi^X_t(\cdot)=\Phi^X(\cdot,t)$ on bounded open sets for $t$
sufficiently small.
\end{Def}
%
%
%
%
\begin{Def}[CC-distances and metric balls]\label{d_FP}\rm
For every $x,y \in \R^n$ we define the following distance
\begin{equation}
d(x,y) =  \inf \lbrace t>0 :\ \mbox{there exists}\ \gamma \in \Gamma_{x,y}(t)\rbrace\,,
\end{equation}
where $\Gamma_{x,y}(t)$ denotes the family of all absolutely continuous curves 
$\gamma:[0,t]\ra \R^n$ with $\gamma(0)=x$, $\gamma(t)=y$ and such that for a.e. $s\in[0,t]$ we have
\[
\dot\gamma(s) = \sum_{j=1}^m a_j(s) X_j(\gamma(s)) \quad \mbox{and} \quad 
\max_{1\leq j\leq m}|a_j(s)|\leq 1 \,.
\]
This distance along with its properties can be found in \cite{NSW}. If the previous
condition is modified replacing $\max_{1\leq j\leq m}|a_j(s)|$ with $(\sum_{1\leq j\leq m}a_j(s)^2)^{1/2}$, 
then in the context of PDEs this distance first appeared in a work by Fefferman and Phong, \cite{FefPho81}.
Metric balls are defined using the following notation
\[ 
B_{x,r}=\{z\in\R^n:d(z,x)<r\},\quad D_{x,r}=\{z\in\R^n:d(z,x)\leq r\}
\]
for any $r>0$ and $x\in\R^n$.
We say that $d$ is the Carnot-Carath\'eodory distance, in short {\em CC-distance}, 
with respect to $\cX$. Another analogous distance that will be important for the sequel
is the following one. Let $\Gamma_{x,y}^c(t)$ be the family of all absolutely continuous curves 
$\gamma:[0,t]\ra \R^n$ with $\gamma(0)=x$, $\gamma(t)=y$, such that for a.e. $s\in[0,t]$ we have
\[
\dot\gamma(s) = \sum_{j=1}^m a_j(s) X_j(\gamma(s)) \quad \mbox{and} \quad 
(a_1,\ldots,a_m)\in\{\pm e_1,\ldots,\pm e_m \}\,,
\]
where the curve $(a_1,\ldots,a_m)$ is piecewise constant on $[0,t]$ and $(e_1,\ldots,e_m)$
is the canonical basis of $\R^m$. Thus, we define the distance
\begin{equation}
\rho(x,y) =  \inf \lbrace t>0 :\ \mbox{there exists}\ \gamma \in \Gamma^c_{x,y}(t)\rbrace\,.
\end{equation}
This distance in the framework of PDEs has been first introduced by Franchi and Lanconelli, 
\cite{Franchi83}, \cite{Lanconelli83}, \cite{FL83}.
\end{Def}
\begin{Rem}\label{LipdexpX}\rm
Let us consider $X \in \cX$ and $t,\tau \in \R$, by definition of $d$ and $\rho$, we have
\[
\max\{d(\Phi_t^{X}(x), \Phi_\tau^X(x)),\rho(\Phi_t^{X}(x), \Phi_\tau^X(x))\} \leq |t - \tau|
\]
for any $x\in\R^n$, whenever the flows are defined for times $t$ and $\tau$.
\end{Rem}
%
%
%
%
\begin{Rem}\label{equivd}\rm
Let $\cX$ be the family of smooth H\"ormander vector fields $X_1,\ldots,X_m$
introduced in Section~\ref{Sect:MainNotions}. Then by a rescaling argument, one can easily check
that there holds
\begin{equation}
d(x,y) = \inf \left \{\delta>0 : \mbox{there exists}\ \gamma\in \Gamma_{x,y}^\delta \right\}\,,
\end{equation}
where $\Gamma_{x,y}^\delta(\cX)$ is the family of absolutely continuous curves 
$\gamma:[0,1] \rightarrow \R^n$ such that $\gamma(0) = x$, $\gamma(1) = y$ and for a.e. $t\in[0,1]$ we have
\[
\dot\gamma(t) = \sum_{j=1}^m a_j(t)X_j(\gamma(t)) \quad \mbox{and}\quad \max_{1\leq j\leq m}|a_j(t)| < \delta\,,
\]
where $d$ is introduced in Definition~\ref{d_FP}.
\end{Rem}
\begin{Lem}\label{XbarX}
Let $d$ and $d_1$ two CC-distances associated to the families of smooth 
H\"ormander vector fields $\cX =\{X_1,\ldots,X_m\}$ and $\cX_1 = \{Y_1,\ldots,Y_m\}$, respectively. 
Let $\{i_1,j_1,\ldots,j_{m-1}\}=\{1,2,\ldots,m\}$ and assume that $Y_j = X_j$ for all $j\neq i_1$ and 
$Y_{i_1} = X_{i_1} + X_{j_1}$. Then we have $4^{-1}d\leq d_1\leq 4d$.
\end{Lem}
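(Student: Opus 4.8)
The plan is to reduce the comparison of $d$ and $d_1$ to a single invertible change of horizontal coordinates, and then to transport admissible curves between the two systems by means of the rescaled description of the CC-distance in Remark~\ref{equivd}.

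First I would note that $d_1$ is a genuine CC-distance: since $Y_j=X_j$ for $j\ne i_1$ and $Y_{i_1}=X_{i_1}+X_{j_1}$, the Lie algebra generated by $\{Y_1,\dots,Y_m\}$ equals the one generated by $\{X_1,\dots,X_m\}$, so $\cX_1$ inherits the H\"ormander condition \eqref{hormander}. The key elementary point is that the change of generators is invertible with explicit, controlled coefficients: because $j_1\ne i_1$ we have $X_{j_1}=Y_{j_1}$, hence $X_{i_1}=Y_{i_1}-Y_{j_1}$ and $X_j=Y_j$ for $j\ne i_1$. Therefore, for any real numbers $a_1,\dots,a_m$,
\[
\sum_{j=1}^m a_jX_j=\sum_{j=1}^m b_jY_j,\qquad b_{i_1}=a_{i_1},\quad b_{j_1}=a_{j_1}-a_{i_1},\quad b_j=a_j\ \ (j\ne i_1,j_1),
\]
so $\max_j|b_j|\le 2\max_j|a_j|$; running the same computation in the other direction, $\sum_j b_jY_j=\sum_j a_jX_j$ with $a_{i_1}=b_{i_1}$, $a_{j_1}=b_{j_1}+b_{i_1}$ and $a_j=b_j$ otherwise, so $\max_j|a_j|\le 2\max_j|b_j|$.

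Then I would combine this with Remark~\ref{equivd}. Fix $x,y\in\R^n$ and $\delta>d(x,y)$, and take $\gamma\in\Gamma_{x,y}^\delta(\cX)$, that is $\gamma:[0,1]\to\R^n$ with $\gamma(0)=x$, $\gamma(1)=y$ and $\dot\gamma(t)=\sum_j a_j(t)X_j(\gamma(t))$, $\max_j|a_j(t)|<\delta$ for a.e.\ $t$. By the identity above, the very same curve satisfies $\dot\gamma(t)=\sum_j b_j(t)Y_j(\gamma(t))$ with $\max_j|b_j(t)|<2\delta$ for a.e.\ $t$; hence $\gamma\in\Gamma_{x,y}^{2\delta}(\cX_1)$ and so $d_1(x,y)\le 2\delta$. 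Letting $\delta\downarrow d(x,y)$ gives $d_1\le 2d$, and exchanging the roles of $\cX$ and $\cX_1$ (using the reverse identity) gives $d\le 2d_1$, i.e.\ $\tfrac12 d\le d_1$. In particular $4^{-1}d\le d_1\le 4d$, which is the assertion — in fact with the sharper constant $2$.

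I do not expect a genuine obstacle here. The only points needing a little care are: checking that $\{Y_j\}$ still satisfies the H\"ormander condition (immediate, since the change of generators is linear and invertible); making sure no reparametrization issue arises when passing from one system to the other (this is precisely why I would phrase the argument through Remark~\ref{equivd}, so that literally the same curve is reused and only the bound on the coefficients changes); and the fact that the curve families involved are nonempty, so that the infima are finite, which is part of the statement that $d$ is a distance inducing the Euclidean topology, cf.\ \cite{NSW}.
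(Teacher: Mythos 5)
Your proof follows the same route as the paper's: reuse the very same admissible curve, rewrite its velocity coefficients in the new frame via the invertible change of generators ($b_{i_1}=a_{i_1}$, $b_{j_1}=a_{j_1}-a_{i_1}$, $b_j=a_j$ otherwise), and compare the two infima through Remark~\ref{equivd}. The only difference is that you let $\delta\downarrow d(x,y)$ rather than fixing $\delta=2d(x,y)$ as the paper does, which yields the sharper two-sided bound $2^{-1}d\le d_1\le 2d$; the paper's cruder choice of $\delta$ is the sole reason its stated constant is $4$.
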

\begin{proof}
We can use for $d$ and $d_1$ the equivalent definition stated in Remark~\ref{equivd}.
Taking this into account, we fix a compact set $K\subset\R^n$ and choose any $x_1,x_2\in K$, 
setting $d(x_1,x_2) =\delta/2$, for some $\delta>0$. Then there exists an 
absolutely continuous curve $\gamma : [0,1] \ra \R^n$ belonging to $\Gamma^\delta_{x,y}(\cX)$.
Clearly, we observe that
\[
\dot \gamma =a_{i_1} Y_{i_1}(\gamma)+ (a_{j_1}-a_{i_1}) Y_{j_1}(\gamma) + 
\sum_{s=2}^{m-1} a_{j_s}\,Y_{j_s}(\gamma),
\]
hence $\gamma\in\Gamma_{x,y}^{2\delta}(\cX_1)$, then $d_1(x,y)\leq2\delta=4\,d(x,y)$.
In analogous way we get $ d(x_1,x_2) \leq 4\, d_1(x_1,x_2)$, concluding the proof.
\end{proof}
Next, we introduce the anisotropic Sobolev space $W_\cX^{1,p}$ with respect to the family $\cX$. 
Throughout, for every open set $\Omega \subset \R^n$ we denote by $C^\infty_c(\Omega)$, the class of 
smooth functions with compact support.
\begin{Def}\rm
Given an open  set $\Omega \subset \R^n$, we define the {\em $\cX$-Sobolev space}
$W_\cX^{1,p}(\Omega)$, with $ 1\leq p \leq \infty$, as follows
\[
W_\cX^{1,p}(\Omega)  = \left \lbrace  f\in L^p(\Omega),\ X_jf \in L^p(\Omega),\ j=1,\ldots,m\right \rbrace,
\]
where $X_ju$ is the {\em distributional derivative} of $u \in L^1_{loc}(\Omega)$, namely
\[
\langle X_i u , \phi \rangle = \int_\Omega u\ X_i^* \phi\ dx, \quad \phi \in C^{\infty}_0(\Omega),
\]
and $X_i^*$ is the formal adjoint of $X_i$, namely, $X_i^*=-X_i-\div X_i$.
\end{Def}
The linear space $W^{1,p}_\cX(\Omega)$ is turned into a Banach space by the norm 
\[
 \Vert f \Vert_{W_\cX^{1,p}(\Omega)} := \Vert f \Vert_{L^p(\Omega)} + \sum_{j=1}^m \Vert X_i f \Vert_{L^p(\Omega)}\,.
\]
A function $u \in W_\cX^{1,2}(\Omega)$ is an {\em $\cL$-weak subsolution} of
\begin{equation}\label{subLap}
\cL u = \sum_{i = 1}^m X_i^2 u = 0,
\end{equation}
if for every nonnegative $\eta \in W_{\cX,0}^{1,2}(\Omega)$, we have
$\displaystyle \sum_{i=1}^m \int_{\Omega} X_i u X^*_i \eta dx \geq 0$.
\begin{Lem}\label{flows}
Let $\Omega'$ be an open set compactly contained in $\Omega$ and let $X \in \cX$.
There exists $T >0$ such that  the map  $\Phi^X$ is well defined on $\Omega'\times (-2T,2T)$
and for every $t \in (-2T,2T)$, the mapping $\Phi^X(\cdot,t)  : \Omega' \ra \R^n$ 
is bi-Lipschitz onto its image with inverse $\Phi^X(\cdot,-t)$. The Jacobian 
$J_X$ of $\Phi^X$ satisfies 
\[
J_X(x,t) = 1 + \tilde J_X(x,t)\quad\mbox{and}\quad |\tilde J_X(x,t)| \leq C|t|
\]
for all $x\in\Omega'$ and $|t|<2T$, where $C>0$ is independent of $x$ and $t$.
\end{Lem}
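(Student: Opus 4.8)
The plan is to derive everything from classical smooth ODE theory for the single vector field $X$: the compactness of $\overline{\Omega'}$ gives a uniform existence time and uniform bounds along the relevant trajectories, Gr\"onwall's inequality controls the differential of the flow and of its inverse, and Liouville's formula gives the Jacobian. First I would fix $T$. Since $X$ is smooth on $\R^n$, the flow $\Phi^X$ is smooth on its maximal domain, an open subset of $\R^{n+1}$ containing $\overline{\Omega'}\times\{0\}$; by compactness of $\overline{\Omega'}$ there is $T>0$ with $\overline{\Omega'}\times[-2T,2T]$ contained in this domain, so in particular $\Phi^X$ is well defined and smooth on $\Omega'\times(-2T,2T)$. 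The ``flow tube'' $\cT:=\Phi^X(\overline{\Omega'}\times[-2T,2T])$ is then compact, and I would fix finite bounds $\|DX\|\le M$ and $|\div X|\le M_0$ on $\cT$. Note that for $x\in\Omega'$ and $|s|\le|t|<2T$ one has $\Phi^X(x,s)\in\cT$, while the elementary flow identity $\Phi^X(\Phi^X(x,t),-t)=x$ already shows that $\Phi^X(\cdot,t)$ is injective on $\Omega'$ with inverse $\Phi^X(\cdot,-t)$ on its image.

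For the bi-Lipschitz statement, for fixed $x\in\Omega'$ the matrix $A(t)=D_x\Phi^X(x,t)$ solves the variational equation $\dot A=DX(\Phi^X(x,t))\,A$ with $A(0)=I$, while $A(t)^{-1}$ solves $\frac{d}{dt}A^{-1}=-A^{-1}DX(\Phi^X(x,t))$ with the same initial datum; Gr\"onwall's inequality then gives $\|A(t)\|\le e^{M|t|}\le e^{2MT}$ and $\|A(t)^{-1}\|\le e^{2MT}$ for $|t|<2T$. To pass from these pointwise bounds on the differentials of $\Phi^X(\cdot,t)$ and $\Phi^X(\cdot,-t)$ to a genuine bi-Lipschitz estimate on $\Omega'$, I would split pairs $x_1,x_2\in\Omega'$ into those at Euclidean distance below a fixed threshold $\delta$ (small enough that the segment $[x_1,x_2]$ stays in a neighbourhood of $\overline{\Omega'}$ where the same bounds hold, so that one may integrate the differential along the segment) and those farther apart (where one simply uses $\diam\cT<\infty$). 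This produces a Lipschitz constant for $\Phi^X(\cdot,t)$, and symmetrically for $\Phi^X(\cdot,-t)$, that is uniform in $t\in(-2T,2T)$, which gives the claim.

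Finally, writing $J_X(x,t)=\det D_x\Phi^X(x,t)$, Liouville's formula gives
\[
J_X(x,t)=\exp\Big(\int_0^t(\div X)\big(\Phi^X(x,s)\big)\,ds\Big),
\]
hence $J_X(x,t)=1+\tilde J_X(x,t)$ with $\tilde J_X(x,t)=\exp\big(\int_0^t(\div X)(\Phi^X(x,s))\,ds\big)-1$. Since the exponent has absolute value at most $M_0|t|\le2M_0T$, the elementary bound $|e^a-1|\le|a|\,e^{|a|}$ gives $|\tilde J_X(x,t)|\le M_0\,e^{2M_0T}\,|t|=:C|t|$, with $C$ depending only on $M_0$ and $T$, hence independent of $x\in\Omega'$ and $|t|<2T$. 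I expect the only genuine obstacle to be the first step: a smooth vector field on $\R^n$ need not be complete, so the uniform existence time and the compact flow tube carrying the uniform bounds on $DX$ and $\div X$ must be extracted with some care from the compactness of $\overline{\Omega'}$; once that bookkeeping is done, all the estimates reduce to Gr\"onwall's lemma and Liouville's formula.
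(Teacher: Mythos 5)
Your proposal is correct and fills in, via the variational equation, Gr\"onwall's inequality, and Liouville's formula, exactly the ``standard ODE methods'' that the paper invokes without giving a proof (it simply cites \cite{FSSC97} for the Lipschitz case). The only subtlety you flag --- enlarging $\Omega'$ to a slightly bigger compactly contained open set so that the near-pair segments and the backward flow on the image $\Phi^X(\Omega',t)$ stay in a region where the uniform bounds on $DX$, $\div X$, and the flow's domain hold --- is indeed the only bookkeeping issue, and you address it correctly.
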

The proof of this lemma can be achieved by standard ODEs methods,
see also \cite{FSSC97} for the general case of a Lipschitz vector field.
\begin{The}\label{existenceDer}
Every Lipschitz function on an open set $\Omega\subset \R^n$ belongs to $W_\cX^{1,\infty}(\Omega)$.
\end{The}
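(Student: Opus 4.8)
The plan is to reduce the statement to the classical fact that a Lipschitz function has locally bounded distributional Euclidean derivatives, and then to transfer this information to the vector fields of $\cX$ by an integration by parts against the formal adjoints $X_i^*$.

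First I would fix a Lipschitz $f:\Omega\to\R$ and recall that $f$ has locally bounded Euclidean weak gradient. This can be obtained either from Rademacher's theorem, which gives that $f$ is differentiable a.e. with $|\nabla f|\le\Lip(f)$ a.e. and that this pointwise gradient is the distributional one, or, more elementarily, by mollification: the regularizations $f_\e=f*\r_\e$ are smooth, converge to $f$ locally uniformly, and satisfy $\|\nabla f_\e\|_{L^\infty(\Omega')}\le\Lip(f)$ for every $\Omega'\Subset\Omega$ and all sufficiently small $\e$, so any weak-$*$ limit $g\in L^\infty_{loc}(\Omega;\R^n)$ of $\nabla f_\e$ has $|g|\le\Lip(f)$ a.e.\ and coincides with the distributional gradient of $f$. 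In either case one gets
\[
\int_\Omega f\,\partial_{x_k}\phi\,dx=-\int_\Omega(\partial_{x_k}f)\,\phi\,dx,\qquad \phi\in C^\infty_c(\Omega),\ k=1,\dots,n,
\]
with $\partial_{x_k}f\in L^\infty_{loc}(\Omega)$ and $\|\partial_{x_k}f\|_{L^\infty}\le\Lip(f)$.

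Next I would write each vector field in coordinates, $X_i=\sum_{k=1}^n c_{ik}(x)\,\partial_{x_k}$ with $c_{ik}\in C^\infty(\R^n)$, so that $X_i^*\phi=-X_i\phi-(\div X_i)\phi=-\sum_{k=1}^n\partial_{x_k}(c_{ik}\phi)$. For $\phi\in C^\infty_c(\Omega)$ the function $c_{ik}\phi$ lies again in $C^\infty_c(\Omega)$, hence applying the displayed identity to it,
\begin{align*}
\langle X_i f,\phi\rangle&=\int_\Omega f\,X_i^*\phi\,dx=-\sum_{k=1}^n\int_\Omega f\,\partial_{x_k}(c_{ik}\phi)\,dx\\
&=\sum_{k=1}^n\int_\Omega c_{ik}\,(\partial_{x_k}f)\,\phi\,dx=\int_\Omega\Big(\sum_{k=1}^n c_{ik}\,\partial_{x_k}f\Big)\phi\,dx.
\end{align*}
Thus the distributional derivative $X_i f$ is represented by the function $\sum_{k=1}^n c_{ik}\,\partial_{x_k}f$; since the $c_{ik}$ are smooth, hence locally bounded, and $|\partial_{x_k}f|\le\Lip(f)$ a.e., we conclude $X_i f\in L^\infty_{loc}(\Omega)$ for every $i$. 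As $f$ is continuous it is also in $L^\infty_{loc}(\Omega)$, and this yields $f\in W^{1,\infty}_\cX(\Omega)$ (understood locally when $\Omega$ is unbounded, and globally as soon as $\Omega$ is bounded, since then $\sup_\Omega|c_{ik}|<\infty$).

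I do not expect a serious obstacle here: the only genuine input is the equivalence between being Lipschitz and having a bounded Euclidean weak gradient, and the only point requiring a little care is the bookkeeping with $X_i^*$ and the term $\div X_i$, which is absorbed once $X_i^*\phi$ is written in the divergence form $-\sum_k\partial_{x_k}(c_{ik}\phi)$ so that the integration by parts above applies to the genuine coordinate derivatives of $f$.
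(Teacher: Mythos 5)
There is a genuine gap: you have read ``Lipschitz'' as Lipschitz with respect to the \emph{Euclidean} distance, but in this paper (and in the references Proposition~2.9 of \cite{FSSC97} and Theorem~1.3 of \cite{GN98} that the authors invoke for the proof) it means Lipschitz with respect to the \emph{Carnot-Carath\'eodory distance} $d$. This is also forced by the way the theorem is used in the proof of Theorem~\ref{issub}, where it is applied to a function $u$ that Theorem~\ref{LipReg} only shows to be locally Lipschitz with respect to $d$. These two Lipschitz conditions are not the same: locally $|x-y|\lesssim d(x,y)$, so $d$-Lipschitz is a strictly weaker hypothesis than Euclidean Lipschitz. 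In the Heisenberg group, for instance, $p\mapsto d(p,0)$ is $1$-Lipschitz with respect to $d$ and lies in $W^{1,\infty}_{\cX,loc}$, yet along the center it behaves like $\sqrt{|t|}$ and so is not Euclidean Lipschitz and has unbounded Euclidean gradient near the $t$-axis.

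This breaks the very first step of your argument: a $d$-Lipschitz function need not have a locally bounded Euclidean weak gradient, so you cannot appeal to Rademacher's theorem or to the mollification bound $\|\nabla f_\ep\|_{L^\infty}\le\Lip(f)$, and the subsequent identity $X_if=\sum_k c_{ik}\,\partial_{x_k}f$ has no meaning. The actual content of the theorem is the nontrivial passage from one-dimensional Lipschitz control along the integral curves of each $X_i$ (which the $d$-Lipschitz condition does give, cf.\ Remark~\ref{LipdexpX}) to the existence of distributional horizontal derivatives $X_if\in L^\infty_{loc}$. The route taken in the cited references is to straighten the flow of a single $X_i$ and mollify in the flow parameter only, exactly the device the authors use explicitly later, inside the proof of Theorem~\ref{issub}, with the regularization $(u\circ\phi_i)_\ep$. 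Your argument would prove the statement only for the strictly smaller class of Euclidean Lipschitz functions, which is not enough for the applications in this paper.
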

The proof of Theorem~\ref{existenceDer} can be found from either Proposition~2.9 of \cite{FSSC97} or
Theorem~1.3 of \cite{GN98}. From either these papers or the arguments of Theorem 11.7 of \cite{HajKos2000},
it is also not difficult to deduce the following proposition.
\begin{Pro}\label{dirDer}
Let $u : \Omega \ra \R$ be a Lipschitz function. Let $X$ be a vector field of $\cX$ and  fix $ x\in \Omega$. 
Let $\Phi^X_t(x)$ be the flow of $X$ starting at $x$. Then the directional derivative 
$\frac{d}{dt} u(\Phi^X_t(x))|_{t = 0}$ exists almost everywhere and it coincides
with the distributional derivative $Xu$.
\end{Pro}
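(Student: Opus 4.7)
The plan is to reduce the statement to Rademacher's theorem in the Euclidean sense, and then to reconcile the pointwise chain-rule derivative with the distributional $Xu$ defined via the formal adjoint.

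First I would invoke Rademacher's theorem: since $u$ is Lipschitz on $\Omega$ (Euclidean sense), there is a negligible set $N\subset\Omega$ outside of which $u$ is Fr\'echet differentiable, with classical gradient $\nabla u(x)$. Fix $x\in\Omega\setminus N$. Because $X$ is smooth, the flow enjoys the first-order expansion $\Phi^X_t(x)=x+tX(x)+o(t)$ as $t\to 0$, and in particular $|\Phi^X_t(x)-x|=O(|t|)$. Combining this with the definition of differentiability,
\[
u(\Phi^X_t(x))-u(x) \;=\; \nabla u(x)\cdot\bigl(\Phi^X_t(x)-x\bigr)+o(|\Phi^X_t(x)-x|) \;=\; t\,\bigl(\nabla u(x)\cdot X(x)\bigr)+o(t),
\]
so $\tfrac{d}{dt}\big|_{t=0}u(\Phi^X_t(x))$ exists and equals the pointwise quantity $X(x)\cdot\nabla u(x)$ for every $x\in\Omega\setminus N$.

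Second, I would identify this classical value with the distributional derivative $Xu$ used in the excerpt. Writing $X=\sum_{j=1}^n X^j\partial_j$ with smooth coefficients, a direct computation gives $X^*\phi=-X\phi-(\div X)\phi=-\sum_j\partial_j(X^j\phi)$. Rademacher also yields $u\in W^{1,\infty}_{\textup{loc}}(\Omega)$ in the Euclidean sense, so the classical a.e.\ partial derivatives $\partial_j u$ agree with the Euclidean distributional ones and one may integrate by parts: for every $\phi\in C^\infty_c(\Omega)$,
\[
\int_\Omega \bigl(X(x)\cdot\nabla u(x)\bigr)\,\phi(x)\,dx \;=\; -\sum_{j=1}^n\int_\Omega u\,\partial_j(X^j\phi)\,dx \;=\; \int_\Omega u\,X^*\phi\,dx \;=\; \langle Xu,\phi\rangle.
\]
Hence the $L^\infty_{\textup{loc}}$ functions $X(\cdot)\cdot\nabla u(\cdot)$ and $Xu$ coincide almost everywhere, which together with the first part proves the proposition.

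The main point that needs care is the reconciliation between the pointwise Rademacher derivative and the distributional derivative defined through the formal adjoint $X^*$; this is a routine integration by parts but relies on the smoothness of the coefficients $X^j$ assumed throughout the paper. Everything else reduces to the elementary remark that Fr\'echet differentiability at $x$ yields a first-order expansion of $u$ along any $C^1$ curve through $x$, applied here to the integral curve $t\mapsto\Phi^X_t(x)$.
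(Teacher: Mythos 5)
The paper does not write out a proof of this proposition; it is deduced from Proposition~2.9 of \cite{FSSC97}, Theorem~1.3 of \cite{GN98}, or Theorem~11.7 of \cite{HajKos2000}. Still, your argument has a genuine gap: it reads ``Lipschitz'' as \emph{Euclidean} Lipschitz, whereas in this paper (and in all three cited references) the hypothesis is Lipschitz continuity with respect to the Carnot--Carath\'eodory distance $d$. This is the only reading consistent with how the proposition is used: in the proof of Theorem~\ref{issub} the input is that the $\cX$-convex function $u$ is locally Lipschitz \emph{with respect to $d$} (Theorem~\ref{LipReg}), and such functions are in general only Euclidean H\"older continuous of exponent $1/r$, $r$ being the local spanning step (think of the homogeneous gauge on the Heisenberg group, which behaves like $|t|^{1/2}$ in the vertical direction). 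For such $u$ the classical Rademacher theorem is simply not available, so your first step --- Fr\'echet differentiability a.e.\ and the expansion $u(\Phi^X_t(x))-u(x)=t\,\nabla u(x)\cdot X(x)+o(t)$ --- collapses, and with it the identification $Xu=X\cdot\nabla u$ by Euclidean integration by parts. (Your argument is correct as a proof of the proposition under the stronger Euclidean Lipschitz hypothesis, but that version does not cover the functions to which the paper applies it.)

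The intended route is one-dimensional and adapted to the flow. By Remark~\ref{LipdexpX}, $d(\Phi^X_t(x),\Phi^X_\tau(x))\le|t-\tau|$, so for each fixed $x$ the map $t\mapsto u(\Phi^X_t(x))$ is Lipschitz on an interval, and the one-dimensional Rademacher--Lebesgue theorem gives existence of $\frac{d}{dt}u(\Phi^X_t(x))$ for a.e.\ $t$. Near a point where $X\neq0$ one straightens the flow as in Lemma~\ref{divX}, i.e.\ uses the coordinates $(\omega,t)\mapsto\phi(\omega,t)=\Phi^X_t(\omega)$, and Fubini's theorem converts ``a.e.\ $t$ for every $\omega$'' into existence of the derivative at $t=0$ for a.e.\ base point. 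The identification with the distributional derivative $Xu$ is then obtained by integrating by parts in the $t$-variable in these coordinates, using the Jacobian identity \eqref{divJ} to produce the $\div X$ term in $X^*$ and the membership $u\in W^{1,\infty}_{\cX,loc}$ from Theorem~\ref{existenceDer}; this is exactly the computation the authors carry out explicitly inside the proof of Theorem~\ref{issub}. If you want a self-contained proof, that is the argument to reproduce.
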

%
%
%
%
%
%
%
%
\section{Almost exponentials and CC-distances}\label{Sect:AlmostExpCC}
%
%
%
%
%
%
%
%
In this section, we introduce a kind of ``discrete exponential mappings'' for vector fields and recall their properties, following notations and results of \cite{Mo00}. We define 
\begin{eqnarray}\nonumber
 \cX^{(1)} &=& \lbrace X_1,\ldots,X_m\rbrace,\\ \nonumber
 \cX^{(2)} &=& \lbrace X_{[i_1,i_2]},\  1 \leq i_1 < i_2 \leq m \rbrace
\end{eqnarray}
and so on, in such a manner that elements of $\cX^{(k)}$ are the commutators of length $k$. 
We denote by $Y_1,\ldots,Y_q$ an enumeration of all the elements of
$\cX^{(1)},\ldots, \cX^{(r)}$, where $r$ is an integer large enough to ensure that $Y_1,\ldots,Y_q$ span $\R^n$ at each point of a fixed bounded open set $\Omega \subset \R^n$, see Remark~\ref{horm}. 
We call $r$ the {\em local spanning step} and $q$ the {\em local spanning number} of $\cX$, 
to underly that they depend on $\Omega$. It may be worth to stress that the Lie algebra spanned by $\cX$ 
at some point need not be nilpotent, although the local spanning step is finite.

If $Y_i$ is an element of $\cX^{(j)}$, we say $Y_i$ has formal degree $d_i := d(Y_i) = j$.
Let $I = (i_1,\ldots,i_n)\in\{1,2,\ldots,q\}^n$ be a multi-index and define from \cite{NSW}
the functions
$$
\lambda_I(x) = \mbox{det}\left[ Y_{i_1}(x),\ldots, Y_{i_n}(x) \right] \quad\mbox{and}\quad
\Vert h \Vert_I = \max_{j= 1,\ldots,n} |h_j|^{
\frac{1}{d(Y_{i_j}) }}.
$$
As a consequence of the choice of $(Y_1,\ldots,Y_q)$, we have that for every $x\in\Omega$ there exists 
$I\in \{1,2,\ldots,q\}^n$ with $\lambda_I(x)\neq0$.
We denote by $d(I)$ the integer $d_{i_1} + \ldots + d_{i_n}$, where $d_{i_k} = d(Y_{i_k})$.
%
%
%
%
\begin{Def}\rm
Let $X,S\in\cX$ and consider the mappings $\Phi^X_t$ and $\Phi^S_t$, that coincide with 
$\Phi^{tX}_1$ and $\Phi^{tS}_1$, respectively.
Thus, for $t$ sufficiently small, we can define the {\em local exponentials} 
$\exp (t X):=\Phi^{tX}_1$ and $\exp(t S):=\Phi^{tS}_1$, along with the {\em local product}
\[
\exp(t X)\exp(tS)=\Phi^{tX}_1\circ\Phi^{tS}_1\,.
\]
\end{Def}
Let $S_1,\ldots,S_l$ be vector fields belonging to the family $\cX$. Therefore,
for every $a\in \R$ sufficiently small, we can define 
\begin{equation*}
\begin{aligned}
&C_1(a,S_1) = \mbox{exp}(a S_1),\\ 
&C_2(a,S_1,S_2) = \mbox{exp}(-a S_2)\mbox{exp}(-a S_1)\mbox{exp}(a S_2)\mbox{exp}(a S_1),\\ \nonumber
&C_l(a,S_1,\ldots,S_l) =  C_{l-1}(a;S_2,\ldots,S_l)^{-1}\mbox{exp}(-a S_1)C_{l-1}(a;S_2,\ldots,S_l)\mbox{exp}(a S_1).
\end{aligned}
\end{equation*}
By (14) of \cite{Mo00}, for $\sigma \in \R$ sufficiently small we define the {\em approximate exponential}
\begin{equation} \label{almostexp}
\eap^{\sigma S_{[(1,\ldots,l)]}} = 
\left\lbrace
\begin{array}{lcr}
C_l(\sigma^{\frac{1}{l}},S_1,\ldots,S_l), & & \sigma>0, \\
C_l(|\sigma|^{\frac{1}{l}},S_1,\ldots,S_l)^{-1}, & & \sigma<0.
\end{array}
\right.
\end{equation}
Following (16) of \cite{Mo00}, given a multi-index $I = (i_1,\ldots,i_n)$, 
$1\leq i_j \leq q$ and $h \in \R^n$ small enough, then we introduce the {\em almost exponential}
\begin{equation}\label{Emap}
E_{I,x}(h) = \eap^{h_1 Y_{i_1}}\cdots \eap^{h_n Y_{i_n}}(x).
\end{equation}
The next theorem, that is contained in Theorem~3.1 of \cite{Mo00}, shows that the 
almost exponentials give a good representation of the Carnot-Carath\'eodory balls.
%
%
%
%
\begin{The}\label{bounds}
If $\Omega\subset\R^n$ is an open bounded set with local spanning number $q$ and
$K \subset \Omega$ is a compact set, then there exist $\delta_0>0$ and positive 
numbers $a$ and $b$, $b<a<1$, so that, given any $I\in\{1,\ldots,q\}^n$ such that
\begin{equation}\label{lambdacond}
|\lambda_I(x)|\delta^{d(I)} \geq \frac{1}{2} \max_{J\in\{1,\ldots,q\}^n}|\lambda_J(x)| \delta^{d(J)},
\end{equation}
for  $x \in K$ and $0<\delta < \delta_0$, it follows that 
$
B_{x,b\delta} \subset E_{I,x}(\{h\in\R^n: \Vert h \Vert_I < a\delta\}) \subset B_{x,\delta}.
$
\end{The}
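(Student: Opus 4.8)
\textbf{Proof plan for Theorem~\ref{bounds}.}

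The plan is to recall that this is essentially Theorem~3.1 of \cite{Mo00}, so the real work is to package Morbidelli's statement in a form adapted to the compact set $K$ and the chosen enumeration $Y_1,\ldots,Y_q$. First I would fix the bounded open set $\Omega$ with its local spanning number $q$ and local spanning step $r$, and recall from the discussion preceding the statement that for every $x\in\Omega$ there is at least one multi-index $I\in\{1,\ldots,q\}^n$ with $\lambda_I(x)\neq0$; hence the quantity $\max_{J}|\lambda_J(x)|\delta^{d(J)}$ is strictly positive for each $x\in K$, and condition \eqref{lambdacond} singles out a ``nearly optimal'' frame $Y_{i_1},\ldots,Y_{i_n}$ at $x$. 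The key point from \cite{Mo00} is that along such a nearly optimal frame the approximate exponential map $E_{I,x}(h)=E_I(x,h)$ of \eqref{Emap} is, for $\|h\|_I<a\delta$, a bi-Lipschitz-type chart whose image is comparable to the metric ball $B_{x,\delta}$: the outer inclusion $E_{I,x}(\{\|h\|_I<a\delta\})\subset B_{x,\delta}$ follows by estimating the CC-length of the concatenation of the curves defining $\eap^{h_jY_{i_j}}$, using that each $C_l(\sigma^{1/l},S_1,\ldots,S_l)$ is a composition of flows of horizontal vector fields of total CC-cost $O(|\sigma|^{1/l})$ (compare Remark~\ref{LipdexpX}), while the inner inclusion $B_{x,b\delta}\subset E_{I,x}(\{\|h\|_I<a\delta\})$ is the substantive part, proved in \cite{Mo00} via a quantitative inverse function / ball-box argument resting on the lower bound for $|\lambda_I(x)|$ supplied by \eqref{lambdacond}.

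Next I would address the uniformity in $x\in K$. The constants $a,b$ with $b<a<1$ and the threshold $\delta_0>0$ come out of \cite{Mo00} depending only on $\Omega$, the vector fields, and the local spanning data; since $K\subset\Omega$ is compact one may, if needed, first shrink $\Omega$ to a slightly smaller bounded open set still containing $K$ so that all flows $\Phi^{Y}$ appearing in the $C_l$ and in \eqref{almostexp} are defined for the required (uniformly bounded) times, invoking Lemma~\ref{flows}. The finitely many choices of multi-index $I\in\{1,\ldots,q\}^n$ make it harmless that the optimal $I$ varies with $x$: the statement is phrased for \emph{any} $I$ satisfying \eqref{lambdacond}, so one simply takes the constants valid simultaneously for all such $(x,I)$, which is a finite intersection of conditions.

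The main obstacle, in a genuinely self-contained account, would be the inner inclusion $B_{x,b\delta}\subset E_{I,x}(\{\|h\|_I<a\delta\})$: this requires showing that the image of the cube under $E_{I,x}$ is not merely contained in but actually \emph{fills} a metric ball of comparable radius, which is the heart of the Nagel--Stein--Wainger ball-box theorem machinery combined with Morbidelli's approximate-exponential refinement. Here I would quote Theorem~3.1 of \cite{Mo00} directly rather than reproduce its proof, since the estimate is exactly the content of that theorem; the only adaptation is to note that our enumeration $Y_1,\ldots,Y_q$ and formal degrees $d_i$ match Morbidelli's, and that the normalization $\|h\|_I=\max_j|h_j|^{1/d(Y_{i_j})}$ together with the homogeneity of the $C_l$ in the parameter $a$ (each bracket of formal degree $l$ carrying the scaling $a\mapsto a^{1/l}$, hence $\eap^{h_jY_{i_j}}$ scaling like $|h_j|^{1/d_{i_j}}$) is precisely what makes the cube $\{\|h\|_I<a\delta\}$ the correct domain. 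Once these identifications are in place, the two inclusions are the verbatim statement of \cite[Theorem~3.1]{Mo00}, completing the proof.
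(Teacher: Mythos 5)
Your approach is essentially the paper's: Theorem~\ref{bounds} is stated without a self-contained proof and is simply imported from Theorem~3.1 of \cite{Mo00}, with the surrounding discussion checking that the conventions (enumeration $Y_1,\ldots,Y_q$, formal degrees $d_i$, the norm $\Vert\cdot\Vert_I$, and the almost-optimal-frame condition \eqref{lambdacond}) match Morbidelli's setup. Your remarks on uniformity over the compact $K$, the finiteness of the set of multi-indices $I$, and the domains of definition of the flows via Lemma~\ref{flows} are all reasonable and consistent with the paper's intent.

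The one point you gloss over, and which the paper makes explicit in the remark immediately following the theorem statement, is that the inclusions are \emph{not} literally ``the verbatim statement'' of \cite[Theorem~3.1]{Mo00}: Morbidelli's result is phrased for a distance (denoted ``$\rho$'' there) whose definition differs from the distance $d$ of Definition~\ref{d_FP} used here. The two are equivalent in the sense of Definition~\ref{equivdists}, thanks to Theorem~4 of \cite{NSW} combined with Remark~\ref{equivd}, and it is precisely this equivalence that lets one translate Morbidelli's sandwich of balls into a sandwich of $d$-balls \emph{up to a change of the constant $b$}. When citing the result you should therefore not claim a verbatim transfer; you should state that the constant $b$ has been adjusted to absorb the local Lipschitz comparison between the two distances on a neighborhood of $K$, which is the step the paper records in its post-theorem remark.
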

Following the terminology of \cite{NSW}, we introduce the following definition.
\begin{Def}\label{equivdists}\rm 
We say that two distances $\rho_1$ and $\rho_2$ in $\R^n$ are {\em equivalent}, if for every compact
set $K\subset\R^n$, there exist $c_K\geq1$, depending on $K$, such that 
\[ 
c_K^{-1} \rho_1(x,y)\leq\rho_2(x,y)\leq c_K\rho_1(x,y)\quad\mbox{ for all }\quad x,y\in K.
\]
\end{Def}
\begin{Rem}\rm
We have stated Theorem~\ref{bounds} using only metric balls with respect to the distance $d$.
In fact, in \cite{Mo00} the same symbol denotes the same distance, with a different definition,
see Remark~\ref{equivd}. Up to a change of the constant $b>0$ in Theorem~3.1 of \cite{Mo00}, we 
can replace the distance denoted by ''$\rho$'' in \cite{Mo00} with $d$. 
In fact, these two distances are equivalent, due to Theorem~4 of \cite{NSW}, joined with
our Remark~\ref{equivd}. 
\end{Rem}
The following proposition has been pointed out to us by D. Morbidelli. It is a consequence 
of the seminal paper by A. Nagel, E. M. Stein and S. Wainger \cite{NSW}, and it can be
also found as a consequence of Theorem~3.1 of \cite{Mo00}.
\begin{Pro}\label{equivrhod}
The distances $d$ and $\rho$ introduced in Definition~\ref{d_FP} are equivalent.
\end{Pro}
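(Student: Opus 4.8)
The statement to prove is Proposition~\ref{equivrhod}: the distances $d$ and $\rho$ from Definition~\ref{d_FP} are equivalent. Recall that $d$ allows controls $a_j(s)$ with $\max_j|a_j(s)|\leq 1$ (a ``box'' constraint), while $\rho$ only allows controls that are piecewise constant and take values in $\{\pm e_1,\dots,\pm e_m\}$ — that is, move along one vector field at a time, at unit speed.

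\textbf{Plan.}
The plan is to prove the two inequalities separately on a fixed compact set $K$, with the easy direction being $d\leq \rho$ and the substantive direction being $\rho\leq c_K\, d$. For the easy direction: any curve $\gamma\in\Gamma^c_{x,y}(t)$ is in particular an admissible curve for $d$, since its control vector $(a_1,\dots,a_m)$ is one of the $\pm e_i$, which certainly satisfies $\max_j|a_j(s)|\leq 1$. Hence $\Gamma^c_{x,y}(t)\subset\Gamma_{x,y}(t)$ for every $t$, and taking infima gives $d(x,y)\leq\rho(x,y)$ — with constant $1$, uniformly, no compactness needed.

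For the direction $\rho\leq c_K\, d$, the idea is to approximate an arbitrary $d$-admissible curve by a piecewise ``one-field-at-a-time'' curve, paying a bounded multiplicative price. Fix $K$, and note first that by Lemma~\ref{flows} there is a $T>0$ and a neighborhood of $K$ on which all flows $\Phi^{X_j}$ are defined and bi-Lipschitz for $|t|<2T$; also, $d$-balls of small radius around points of $K$ stay in this neighborhood. Take $x,y\in K$ with $d(x,y)=\delta/2$ small, so there is $\gamma\in\Gamma_{x,y}(\delta/2)$, i.e.\ $\dot\gamma=\sum_j a_j(s)X_j(\gamma)$ with $\max_j|a_j(s)|\leq 1$ on $[0,\delta/2]$. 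The standard device is to split $[0,\delta/2]$ into $N$ subintervals of length $\delta/(2N)$ and, on each, replace the simultaneous motion along all the $X_j$ by a concatenation of $m$ successive single-field motions (a ``commutator-free'' Euler/Trotter step): on the $k$th subinterval one moves time $\int a_1$ along $X_1$, then time $\int a_2$ along $X_2$, and so on. Since $|\int_{I_k} a_j(s)\,ds|\leq \delta/(2N)$, each elementary move costs at most $\delta/(2N)$ in $\rho$ (by the definition of $\rho$: moving time $|\tau|$ along a single $\pm e_i$ reaches $\Phi^{X_i}_\tau$, see Remark~\ref{LipdexpX}), so the whole concatenation over all $N$ subintervals costs at most $m\cdot N\cdot \delta/(2N)=m\delta/2$ in $\rho$. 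The remaining point is to control the endpoint error between this concatenated curve and $\gamma$: a Gronwall/Taylor estimate shows that on each subinterval the two agree up to an error $O((\delta/N)^2)$ coming from the commutators of the $X_j$, so after $N$ subintervals the accumulated error is $O(\delta^2/N)\to 0$. Letting $N\to\infty$ one gets $\rho(x,y)\leq m\,d(x,y)$ for $d(x,y)$ small; for pairs at bounded distance one chains a bounded number of such small steps, and the constant depends on $K$ only. (Alternatively, and more in the spirit of the paper, one may invoke Theorem~\ref{bounds}: the $d$-ball $B_{x,b\delta}$ is contained in $E_{I,x}(\{\|h\|_I<a\delta\})$, and each approximate exponential $\eap^{h Y_i}$ is a bounded composition of flows $\Phi^{X_j}_t$ with $|t|\lesssim\|h\|_I$; reading off that each such flow is a $\rho$-geodesic segment of length $|t|$ gives $\rho(x,y)\lesssim_K \delta$ directly from a $d$-bound, which is exactly the remark preceding the Proposition.)

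\textbf{Main obstacle.} The subtle point is the quantitative endpoint estimate in the approximation: one must check that replacing the genuine (simultaneous) horizontal motion by the ordered concatenation of single-field motions produces only a higher-order error, \emph{uniformly for $x$ in $K$} and with constants controlled by bounds on the $X_j$ and their first commutators on a neighborhood of $K$. This is the Baker--Campbell--Hausdorff/Trotter-type estimate, and it is exactly where compactness of $K$ enters (to get uniform $C^1$-bounds on the vector fields and to keep all the intermediate points inside the domain where the flows are defined). Once this uniform estimate is in hand, summing the per-step errors and taking the limit $N\to\infty$ is routine, and combined with the trivial inequality $d\leq\rho$ this yields the claimed equivalence with a constant $c_K$ depending only on $K$ (and on $\cX$).
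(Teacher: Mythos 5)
Your alternative argument (the parenthetical one invoking Theorem~\ref{bounds}) is essentially what the paper does: the paper records Proposition~\ref{equivrhod} as a consequence of Nagel--Stein--Wainger and of Morbidelli's Theorem~3.1 (Theorem~\ref{bounds} here), and your unpacking of this — $y\in B_{x,b\delta}$ implies $y=E_{I,x}(h)$ for some $\|h\|_I<a\delta$, and $E_{I,x}(h)$ is a composition of at most $\bar N$ flows $\exp(\pm|h_j|^{1/d_{i_j}}X^{i_j}_s)$ each of $\rho$-cost at most $a\delta$, so $\rho(x,y)\lesssim_K d(x,y)$ — is correct and complete. The trivial direction $d\leq\rho$ is also handled correctly.

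Your primary argument via Trotter splitting, however, has a real gap in the last step. After the $N$-fold concatenation you obtain, for each $N$, a point $z_N$ with $\rho(x,z_N)\leq m\,\delta/2$ and $|z_N-y|=O(\delta^2/N)\to0$ in the Euclidean norm; but you then assert ``letting $N\to\infty$ one gets $\rho(x,y)\leq m\,d(x,y)$.'' To pass to this limit you need $\rho(z_N,y)\to0$, i.e.\ that Euclidean convergence implies $\rho$-convergence, equivalently that $\rho$-balls about $y$ contain Euclidean neighbourhoods of $y$. That is precisely the nontrivial Chow--Rashevskii-type statement for $\rho$, and it is not a priori available: all you have in hand is the trivial inequality $d\leq\rho$, which goes in the wrong direction. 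Put differently, the Trotter step controls the $\rho$-length of an approximating curve but says nothing about the $\rho$-cost of correcting the endpoint error, and controlling that correction cost is exactly the content of the NSW/Morbidelli covering. So the ``main obstacle'' you flag — the BCH/Trotter endpoint estimate — is not actually the missing ingredient; the estimate you describe is standard and unproblematic, but it only yields Euclidean proximity of $z_N$ to $y$, and converting Euclidean proximity into $\rho$-proximity requires the structure theorem you relegate to the parenthesis. You should therefore either drop the Trotter route or explicitly close it by invoking Theorem~\ref{bounds} (or the NSW ball--box theorem for $\rho$) to control $\rho(z_N,y)$; the cleanest fix is to promote your alternative argument to the main one.
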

\begin{Rem}\rm
Notice that the inequality $d\leq \rho$ is trivial.
As a consequence of the previous proposition, $\cX$-convex functions that are locally bounded
are also locally Lipschitz continuous with respect to $d$ and any other equivalent distance,
according to the notion of equivalence given in Definition~\ref{equivdists} 
\end{Rem}
We fix a multi-index $I = (i_1,\ldots,i_n)$ and for each $Y_{i_k}$ we have a multi index 
\[
J_{i_k} = (j^{i_k}_1,j^{i_k}_2,\ldots,j^{i_k}_{d_{i_k}})\quad\mbox{such that }\quad Y_{i_k} = X_{[J_{i_k}]},
\]
where $d_{i_k}$ is the formal degree of $Y_{i_k}$.  
We notice that $1\leq j^{i_k}_s\leq m$ for all $1\leq s\leq d_{i_k}$ and $d_{i_k}\leq r$,
where $r$ is the local spanning step of $\cX$.
By definition of $\eap$ we get
\begin{equation}\label{eapk}
\eap^{h Y_{i_k}} = 
\left \lbrace
\begin{array}{lr}
\prod_{s=1}^{N_{i_k}}\mbox{exp}(\sigma_s h^\frac{1}{d_{i_k}} X^{i_k}_s) & h\geq 0,\\
\prod_{s=1}^{N_{i_k}}\mbox{exp}( -\sigma_{N_{i_k}+1-s} |h|^\frac{1}{d_{i_k}} X^{i_k}_{N_{i_k}+1-s})& h< 0.
\end{array} \right.
\end{equation}
where $\sigma_s \in \lbrace -1,1\rbrace$, $N_{i_k}$ is the length of 
$\eap^{hY_{i_k}}$ and $X^{i_k}_1,\ X^{i_k}_2,\ldots, X^{i_k}_{N_{i_k}}$ is a suitable possibly iterated
choice among the vectors $X_{j^{i_k}_1},X_{j^{i_k}_2},\ldots, X_{j^{i_k}_{d_{i_k}}}$.
A simple calculation gives  $N_{i_k} = 2^{d_{i_k}} - 2 + 2^{d_{i_k}-1}$. 
We define $N(I) = \sum_{k=1}^n 2 N_{i_k}$ along with the mapping $G_{I,x} : \R^N \rightarrow \R^n$,
that is
%
%
%
%
\begin{equation}\label{GI}
G_{I,x}(w)= \prod_{k=1}^n \left \lbrace 
\prod_{s=1}^{N_{i_k}} \mbox{exp}(w_{k,s,2}X^{i_k}_{N_{i_k}+1-s})
\prod_{s=1}^{N_{i_k}} \mbox{exp}(w_{k,s,1}X^{i_k}_s)
 \right \rbrace(x).
\end{equation}
In the definition of $G_{I,x}$, we use the product to indicate the composition of 
flows according to the order that starts from the right.
The variable $w$ denotes the vector
\[
(w_{1,1,1},w_{1,2,1},\ldots,w_{1,N_{i_1},1},w_{1,1,2},w_{1,2,2},\ldots,w_{1,N_{i_1},2},
\ldots,w_{n,1,2},\ldots,w_{n,N_{i_n},2})
\]
belonging to $\R^{N(I)}$.
The integer $N(I)$ is locally uniformly bounded from above, since every multi-index
$I=(i_1,\ldots,i_n)$ of Theorem~\ref{bounds} depends on $x$ and satisfies $N_{i_k} \leq 2^{r}- 2 + 2^{r-1}$,
where $r$ is the local spanning step of $\cX$, depending on the fixed bounded open set $\Omega$.
Therefore we have a local upper bound $\bar N$ defined as follows
%
%
%
%
\begin{equation}\label{barN}
\bar N = 2n(2^{r+1}-2 + 2^{r-1})
\end{equation}
and clearly $N(I)\leq \bar N$, where $\bar N$ is independent of $I$.
\begin{Def}\label{normN}\rm
For every $N\in\N\sm\{0\}$, we set $\displaystyle \Vert w \Vert_N = \max_{k=1,\ldots,N} |w_k|$,
for every $w \in \R^N$. The corresponding open ball is defined as follows
\[
S_{N,\delta}=\{w\in\R^N:\|w\|_N<\delta\}\,.
\]
\end{Def}
From standard theorems on ODEs, one can establish the following fact.
\begin{Pro}\label{ExN}
If $K\subset\Omega$ is a compact set and $N\in\N$ is positive, then there exists 
$\delta_1>0$ only depending on $K$, $\Omega$ and ${\mathcal X}$ such that for every 
$0<\delta\leq\delta_1$ and every $x\in K$ we have
$B_{x,N\delta_1}\subset\Omega$ and for every integers $1\leq j_1,\ldots,j_N\leq m$, the composition
\[ 
\Big(\exp(w_N X_{j_N})\cdots\exp(w_2 X_{j_2}) \exp(w_1 X_{j_1})\Big)(x) 
\]
is well defined and contained in $B_{x,N\delta}$ for all $w\in S_{N,\delta}$.
\end{Pro}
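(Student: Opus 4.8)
The statement is essentially a quantitative, uniform version of the local existence theorem for flows of smooth vector fields, combined with the fact that each individual flow is $1$-Lipschitz for the CC-distance (Remark~\ref{LipdexpX}). The plan is to proceed by a finite induction on the number $N$ of composed flows, controlling simultaneously (i) that each partial composition stays in $\Omega$ so that the next flow is defined, and (ii) that the CC-distance of the partial composition from the base point $x$ grows by at most $\delta$ at each step.

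First I would fix the compact set $K\subset\Omega$ and choose an intermediate open set $\Omega'$ with $K\subset\Omega'\Subset\Omega$. Applying Lemma~\ref{flows} to $\Omega'$ for each of the finitely many vector fields $X_1,\ldots,X_m$, I obtain a single $T>0$ such that every $\Phi^{X_j}(\cdot,t)$ is defined and bi-Lipschitz on $\Omega'$ for $|t|<2T$. The subtlety is that successive compositions leave $\Omega'$, so one fixed $\Omega'$ is not enough: instead I would set up a finite chain $K=K_0\subset K_1\subset\cdots\subset K_N$ of compact sets, each contained in the next's interior and all contained in $\Omega$, chosen so that $K_{i}$ contains a CC-neighbourhood of $K_{i-1}$ of some fixed radius $\eta>0$ (possible since $\dist_d(K,\Omega^c)>0$ and $d$ induces the Euclidean topology). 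Shrinking the flow time $T$ if necessary so that it works on a neighbourhood of $K_N$ for all $X_j$, and then setting $\delta_1 = \min\{\eta, T, \dist_d(K,\Omega^c)/(N+1)\}$, guarantees both that $B_{x,N\delta_1}\subset\Omega$ for $x\in K$ and that each flow in the composition is applied to a point where it is defined.

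The inductive step is then immediate from Remark~\ref{LipdexpX}: if $y_{k} = \bigl(\exp(w_k X_{j_k})\cdots\exp(w_1 X_{j_1})\bigr)(x)$ satisfies $d(x,y_k)<k\delta$ and lies in $K_k$, then $\exp(w_{k+1}X_{j_{k+1}})$ is defined at $y_k$ (as $y_k\in K_k\subset$ interior of $K_{k+1}$ and $|w_{k+1}|<\delta\leq\delta_1\leq T$), and
\[
d(x,y_{k+1}) \leq d(x,y_k) + d(y_k, \Phi^{X_{j_{k+1}}}_{w_{k+1}}(y_k)) < k\delta + |w_{k+1}| < (k+1)\delta,
\]
using Remark~\ref{LipdexpX} for the second summand; moreover $y_{k+1}\in K_{k+1}$ since it lies within CC-distance $\delta\leq\eta$ of $y_k\in K_k$. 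After $N$ steps this yields $d(x,y_N)<N\delta$, i.e. the composition is well defined and lands in $B_{x,N\delta}$, as claimed.

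I expect the only genuine point requiring care to be the bookkeeping in the first step — making the nested compact sets $K_i$ and the single radius $\delta_1$ work simultaneously for every choice of indices $j_1,\ldots,j_N$ and every $x\in K$. Since there are only finitely many vector fields $X_1,\ldots,X_m$ and $N$ is fixed, all the relevant minima are over finite sets and hence positive; the dependence of $\delta_1$ on $K$, $\Omega$ and $\cX$ is exactly as asserted, with no dependence on the particular indices $j_k$. Everything else is a routine application of Lemma~\ref{flows} and the $1$-Lipschitz property of single flows.
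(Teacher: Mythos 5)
The paper itself gives no proof of Proposition~\ref{ExN}: it is dismissed with the remark ``From standard theorems on ODEs, one can establish the following fact.'' Your argument is therefore not being measured against a written proof but against the intended routine one, and it matches it. The two ingredients you use---Lemma~\ref{flows} for uniform local existence of flows, and Remark~\ref{LipdexpX} for the $1$-Lipschitz property $d(\Phi^X_t(x),x)\le|t|$---are exactly what is needed, and the finite induction on the number of composed flows, with a nested chain $K=K_0\subset K_1\subset\cdots\subset K_N$ of compacts each containing a CC-$\eta$-neighbourhood of the previous one, is the standard way to make the uniformity over $x\in K$ and over all choices of indices $j_1,\ldots,j_N$ precise. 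One small point of care which you implicitly handle but could spell out: Remark~\ref{LipdexpX} requires the flow curve $s\mapsto\Phi^{X_{j_{k+1}}}_s(y_k)$ to exist for all $s$ between $0$ and $w_{k+1}$, not merely at the endpoint, so that the curve itself witnesses the CC-distance bound; this is guaranteed because $|w_{k+1}|<\delta_1\le T$ and $y_k$ lies in $K_k$, on a neighbourhood of which $T$ was chosen to work. With that understood, your choice $\delta_1=\min\{\eta,T,\dist_d(K,\Omega^c)/(N+1)\}$ yields both $B_{x,N\delta_1}\subset\Omega$ and the claimed inclusion $y_N\in B_{x,N\delta}$, and the dependence of $\delta_1$ only on $K$, $\Omega$ and $\cX$ (and the fixed $N$) is as asserted. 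Your proof is correct and complete.
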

The previous proposition immediately leads us to the following consequence.
%
%
%
%
\begin{Cor}\label{subset}
Let $\Omega$ be an open bounded set with local spanning number $q$ and local spanning step $r$. 
If $K\subset\Omega$ is a compact set, then there exist $\delta_1>0$ 
such that for every $x\in K$, every $0<\delta\leq\delta_1$ and every multi-index $I\in\{1,2,\ldots,q\}^n$, 
the mapping $G_{I,x}$ introduced in \eqref{GI} is well defined on $S_{N(I),\delta}$ and
\[
G_{I,x}(S_{N(I),\delta})\subset B_{x,\bar N \delta}\subset D_{x,\bar N\delta_1}\subset\Omega\,,
\]
where $\bar N$ is defined in \eqref{barN}.
\end{Cor}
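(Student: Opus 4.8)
The plan is to deduce the statement directly from Proposition~\ref{ExN}; the only point deserving a little care is reducing the index-dependent length $N(I)$ to the uniform bound $\bar N$ of \eqref{barN}. First I would record that, by the very definition \eqref{GI}, the map $G_{I,x}$ is an ordered composition of exactly $N(I)$ elementary flows $\exp(t X_j)$ with $X_j\in\cX$: after relabelling the $N(I)$ components of $w$ as $(w_1,\ldots,w_{N(I)})$ and the corresponding vector fields $X^{i_k}_s$ as $X_{j_1},\ldots,X_{j_{N(I)}}$ with $1\le j_\ell\le m$, one has
\[
G_{I,x}(w)=\big(\exp(w_{N(I)}X_{j_{N(I)}})\cdots\exp(w_1 X_{j_1})\big)(x),
\]
which is exactly the shape of the composition in Proposition~\ref{ExN}. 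Moreover $N(I)\le\bar N$ for every $I\in\{1,\ldots,q\}^n$, as noted after \eqref{barN}.

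Next I would apply Proposition~\ref{ExN} to the compact set $K$ with the fixed integer $N=\bar N$, obtaining $\bar\delta>0$, depending only on $K$, $\Omega$ and $\cX$, such that $B_{x,\bar N\bar\delta}\subset\Omega$ for every $x\in K$, and such that for every $0<\delta\le\bar\delta$, every $x\in K$ and every choice of $\bar N$ vector fields among the $X_j$, the corresponding $\bar N$-fold composition of elementary flows is well defined on $S_{\bar N,\delta}$ and maps it into $B_{x,\bar N\delta}$. Setting $\delta_1:=\bar\delta/2$ one gets $D_{x,\bar N\delta_1}\subset B_{x,\bar N\bar\delta}\subset\Omega$, which yields the last two inclusions of the Corollary, while $B_{x,\bar N\delta}\subset D_{x,\bar N\delta_1}$ for $0<\delta\le\delta_1$ is immediate from $\delta\le\delta_1$.

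Finally, to obtain the first inclusion for an arbitrary $I$ I would simply pad the composition: given $w\in S_{N(I),\delta}$, extend it to $\tilde w\in S_{\bar N,\delta}$ by adjoining $\bar N-N(I)$ null components and insert the same number of trivial flows $\exp(0\cdot X_1)=\mathrm{id}$ into the product above, so that $G_{I,x}(w)$ becomes a composition of exactly $\bar N$ elementary flows evaluated at $\tilde w\in S_{\bar N,\delta}$. Proposition~\ref{ExN} then gives at once that $G_{I,x}$ is well defined on $S_{N(I),\delta}$ and that $G_{I,x}(S_{N(I),\delta})\subset B_{x,\bar N\delta}$; combining with the previous paragraph completes the proof. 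I do not expect any genuine obstacle here: the statement is a repackaging of Proposition~\ref{ExN}, and the only thing to verify is the harmless length reduction just described. (Alternatively, since $\{1,\ldots,q\}^n$ is finite one could invoke Proposition~\ref{ExN} once for each value of $N(I)$ and take the minimum of the resulting radii, but the padding argument avoids even that bookkeeping.)
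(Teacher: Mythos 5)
Your proposal is correct and takes exactly the route the paper intends: the paper offers no written proof, stating only that the corollary is an immediate consequence of Proposition~\ref{ExN}. You have correctly spelled out the two small points that make the deduction go through: (i) by \eqref{GI} the map $G_{I,x}$ is an ordered composition of $N(I)\le\bar N$ elementary flows $\exp(tX_j)$ with $X_j\in\cX$, so it has precisely the shape handled by Proposition~\ref{ExN}; (ii) applying Proposition~\ref{ExN} once with $N=\bar N$ and padding shorter compositions with $\exp(0\cdot X_1)=\mathrm{id}$ removes any dependence of the admissible radius on $I$. The halving $\delta_1=\bar\delta/2$ to pass from the open ball $B_{x,\bar N\bar\delta}\subset\Omega$ to the closed ball $D_{x,\bar N\delta_1}\subset\Omega$ is the right touch, and the remaining inclusions are as immediate as you say. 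Nothing is missing.
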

For any of the above multi-indexes $I= (i_1,\ldots,i_n)$,
we introduce the function $F_{I,x} : \R^n \rightarrow \R^{N(I)}$ as follows
\begin{eqnarray*}
F_{I,x}(h_1,\ldots,h_n) = (\sigma_{1,1} \delta_1(h_1) h_{1}^{\frac{1}{d_{i_1}}},\ldots,\sigma_{1,N_{i_1}} \delta_1(h_1) h_{1}^{\frac{1}{d_{i_1}}}, -\sigma_{1,N_{i_1}} \delta_2(h_1) |h_{1}|^{\frac{1}{d_{i_1}}}, \\ 
\ldots,-\sigma_{1,1} \delta_2(h_1)|h_{1}|^{\frac{1}{d_{i_1}}},\ldots,
 \sigma_{n,1}\delta_1(h_n) h_n^{\frac{1}{d_{i_n}}} \ldots,\sigma_{n,N_{i_n}}\delta_1(h_n)h_{n}^{\frac{1}{d_{i_n}}},\ldots)
\end{eqnarray*}
where $\sigma_{k,j} \in \lbrace -1,1 \rbrace$, $k=1,\ldots,n$ and $j=1,\ldots,N_{i_k}$. 
More precisely, we have
%
%
%
%
\begin{equation}\label{FI}
F_{I,x}(h)=\sum_{k=1}^n\Big\lbrace \sum_{s=1}^{N_{i_k}} \sigma_{k,s}\,\delta_1(h_k)\,
h_k^{1/d_{i_k}}\,e_{k,s,1}-\sum_{s=1}^{N_{i_k}} \sigma_{k,N_{i_k}+1-s}\;\delta_2(h_k)
\,|h_k|^{1/d_{i_k}}\,e_{k,s,2}
\Big\rbrace\,,
\end{equation}
where we have introduced the canonical basis 
\[
\big\lbrace e_{k,s,i}: 1\leq k\leq n,\ 1\leq s\leq N_{i_k},\ i=1,2\big\rbrace
\]
of $\R^{N(I)}$ and the functions  
\[
\delta_1(x)= \left \lbrace \begin{array}{lr} 1& x\geq0 \\ 0 & x <0 \end{array}\right.
\quad\mbox{and}\quad
\delta_2(x)= \left \lbrace \begin{array}{lr} 0& x\geq0 \\1  & x <0 \end{array}\right.\,.
\]
%
%
%
%
\begin{Rem}\label{EGF}\rm 
From the definitions of $G_{I,x}$ and $F_{I,x}$, it is straightforward to observe that 
$ E_{I,x} = G_{I,x} \circ F_{I,x}$ on a sufficiently small neighbourhood of the origin in $\R^n$.
\end{Rem}
%
%
%
%
\begin{The}\label{supset}
If $\Omega\subset\R^n$ is an open bounded set with local spanning number $q$ and $K \subset \Omega$ is compact, then there exist $\delta_0>0$ and positive numbers $a$ and $b$, $b<a<1$, so that for any 
$x\in K$ and $0<\delta<\delta_0$ and any $I\in\{1,\ldots,q\}^n$ with
\begin{equation}\label{lamI}
|\lambda_I(x)|\delta^{d(I)} \geq \frac{1}{2} \max_{J\in\{1,\ldots,q\}^n}|\lambda_J(x)| \delta^{d(J)}\,,
\end{equation}
we have $B_{x,b\delta} \subset G_{I,x}(\{w\in\R^{N(I)}: \Vert w \Vert_{N(I)} < a\delta\}) 
\subset B_{x,\bar N\delta}\subset D_{x,\bar N \delta_0} \subset\Omega$.
\end{The}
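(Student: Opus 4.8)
The plan is to derive Theorem~\ref{supset} directly from Theorem~\ref{bounds}, using the factorization $E_{I,x}=G_{I,x}\circ F_{I,x}$ of Remark~\ref{EGF} together with the containment of Corollary~\ref{subset}. First I would fix $\Omega$ and $K$ as in the statement and collect the three relevant thresholds: let $\delta_0'>0$ and $0<b'<a'<1$ be the constants given by Theorem~\ref{bounds} for this $\Omega$ and $K$; let $\delta_1>0$ be the constant given by Corollary~\ref{subset}; and let $r_0>0$ be so small that the identity $E_{I,x}=G_{I,x}\circ F_{I,x}$ of Remark~\ref{EGF} holds on $\{h\in\R^n:\|h\|_I<r_0\}$ for every $x\in K$ and every $I\in\{1,\ldots,q\}^n$. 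I would then set $a=a'$, $b=b'$ and $\delta_0=\min\{\delta_0',\,\delta_1,\,r_0/a'\}$; in particular $b<a<1$. Now fix $x\in K$, $0<\delta<\delta_0$ and a multi-index $I$ satisfying \eqref{lamI}. Since \eqref{lamI} is literally condition \eqref{lambdacond}, Theorem~\ref{bounds} applies and gives $B_{x,b\delta}\subset E_{I,x}\big(\{h\in\R^n:\|h\|_I<a\delta\}\big)$.

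The key elementary point is that $F_{I,x}$ preserves the relevant quasinorms: $\|F_{I,x}(h)\|_{N(I)}=\|h\|_I$ for every $h\in\R^n$. Indeed, reading off \eqref{FI}, for each $k\in\{1,\ldots,n\}$ the components of $F_{I,x}(h)$ along the basis vectors $e_{k,s,1}$ and $e_{k,s,2}$ are either $0$ or have absolute value $|h_k|^{1/d_{i_k}}$, because $\delta_1(h_k),\delta_2(h_k)\in\{0,1\}$ and $\sigma_{k,s}\in\{-1,1\}$, and they all vanish precisely when $h_k=0$; hence $\|F_{I,x}(h)\|_{N(I)}=\max_{1\leq k\leq n}|h_k|^{1/d_{i_k}}$, which coincides with $\|h\|_I$ since $d(Y_{i_k})=d_{i_k}$. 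Consequently $F_{I,x}\big(\{\|h\|_I<a\delta\}\big)\subset\{w\in\R^{N(I)}:\|w\|_{N(I)}<a\delta\}$, and since $a\delta<a'\delta_0\leq r_0$ the factorization of Remark~\ref{EGF} applies, giving
\[
E_{I,x}\big(\{\|h\|_I<a\delta\}\big)\subset G_{I,x}\big(\{\|w\|_{N(I)}<a\delta\}\big).
\]
Together with $B_{x,b\delta}\subset E_{I,x}(\{\|h\|_I<a\delta\})$ from Theorem~\ref{bounds}, this yields the first inclusion $B_{x,b\delta}\subset G_{I,x}(\{\|w\|_{N(I)}<a\delta\})$ claimed in Theorem~\ref{supset}.

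For the remaining inclusions I would just invoke Corollary~\ref{subset}. Since $a<1$ and $0<\delta<\delta_0\leq\delta_1$, the set $\{w:\|w\|_{N(I)}<a\delta\}$ equals $S_{N(I),a\delta}$ with $0<a\delta<\delta_1$, so Corollary~\ref{subset} applied with radius $a\delta$ gives $G_{I,x}(S_{N(I),a\delta})\subset B_{x,\bar N a\delta}\subset B_{x,\bar N\delta}$. Moreover $B_{x,\bar N\delta}\subset D_{x,\bar N\delta_0}$ since $\delta<\delta_0$, and $D_{x,\bar N\delta_0}\subset D_{x,\bar N\delta_1}\subset\Omega$ by Corollary~\ref{subset} and $\delta_0\leq\delta_1$. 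Chaining everything gives $B_{x,b\delta}\subset G_{I,x}(\{\|w\|_{N(I)}<a\delta\})\subset B_{x,\bar N\delta}\subset D_{x,\bar N\delta_0}\subset\Omega$, which is exactly the asserted chain. I expect no genuine obstacle here: the one step that needs real care is the quasinorm identity $\|F_{I,x}(h)\|_{N(I)}=\|h\|_I$, to be read off carefully from \eqref{FI}, and the rest is the bookkeeping of merging the thresholds from Theorem~\ref{bounds}, Corollary~\ref{subset} and Remark~\ref{EGF} into a single $\delta_0$. Thus the substantive content of the statement is already carried by Theorem~\ref{bounds} and the approximate-exponential constructions of this section.
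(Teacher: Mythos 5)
Your proof is correct and follows essentially the same route as the paper's: start from Theorem~\ref{bounds}, use the quasinorm identity $\Vert F_{I,x}(h)\Vert_{N(I)}=\Vert h\Vert_I$ together with the factorization $E_{I,x}=G_{I,x}\circ F_{I,x}$ of Remark~\ref{EGF}, and invoke Corollary~\ref{subset} for the outer inclusions. You are if anything a bit more explicit than the paper in merging the three thresholds into a single $\delta_0$ and in verifying the quasinorm identity componentwise.
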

\begin{proof}
From Theorem~\ref{bounds}, we get the existence of $\delta_0,a,b>0$, with $b<a<1$ such that 
for every $x\in K$, $0<\delta<\delta_0$ and $I\in\{1,\ldots,q\}^n$ satisfying \eqref{lamI},
we have the inclusion 
\[
B_{x,b\delta} \subset E_{I,x}(\{h\in\R^n: \Vert h\Vert_I<a\delta\}).
\]
This proves the validity of this inclusion, since for every $x\in K$ and $0<\delta<\delta_0$
the existence of $I$ satisfying \eqref{lamI} is trivial. From formula \eqref{FI}, we have 
\begin{equation}\label{VF_I}
\Vert F_{I,x}(h) \Vert_{N(I)} = \Vert h \Vert_I\quad\mbox{for all}\quad h\in\R^n.
\end{equation}
Remark~\ref{EGF} implies that $E_{I,x}(h)=G_{I,x}\circ F_{I,x}(h)$ for all $h\in\R^n$, possibly small,
such that $G_{I,x}$, introduced in \eqref{GI}, is well defined on $F_{I,x}(h)$. In view of Corollary~\ref{subset}, it is not 
restrictive to choose $\delta_0>0$ possibly smaller, such that $G_{I,x}$ is well defined on 
\begin{equation}\label{GSI}
S_{N(I),\delta_0}\quad \mbox{and}\quad
G_{I,x}(S_{N(I),\delta})\subset B_{x,\bar N\delta}\subset D_{x,\bar N\delta_0}\subset\Omega.
\end{equation}
Taking into account \eqref{VF_I}, we have
$F_{I,x}(\lbrace h\in\R^n: \Vert h \Vert_I < a\delta \rbrace ) \subset S_{N(I),\delta}$, that leads us to
the following inclusions
\begin{equation}
B_{x,b\delta}\subset E_{I,x}(\lbrace h\in\R^n: \Vert h \Vert_I < a\delta \rbrace ) \subset G_{I,x}\big(S_{N(I),\delta}\big)
\subset B_{x,\bar N\delta}
\end{equation}
concluding the proof.
\end{proof}
According to \cite{NSW}, for $x\in\R^n$, we set 
$$
\Lambda (x,\delta) = \sum_{I\in\{1,2,\ldots,q\}^n} \lv \lambda_I (x)\rv\delta^{d(I)}\,.
$$
From Theorem~1 of \cite{NSW}, we get the following important fact.
\begin{The}\label{volume1}
For every $K \subset \R^n$ compact, there exist $\delta_0>0$ and positive constants 
$C_1$ and $C_2$, depending on $K$, so that for all $x \in K$ and 
every $0<\delta<\delta_0$ we have
$$
C_1 \leq \frac{\lv B_{x,\delta}\rv}{\Lambda(x,\delta)} \leq C_2.
$$
\end{The}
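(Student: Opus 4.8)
The plan is to derive the two-sided bound from the ball--box description of $B_{x,\delta}$ contained in Theorem~\ref{bounds}, combined with the Jacobian estimate for the approximate exponential map $E_{I,x}=E_I(x,\cdot)$ of \eqref{Emap}, and the elementary observation that $\Lambda(x,\delta)$ is comparable to its largest summand. Fix a bounded open set $\Omega$ compactly containing $K$, with local spanning number $q$ and local spanning step $r$, and let $\delta_0,a,b$ (with $b<a<1$) be the constants produced by Theorem~\ref{bounds} for $\Omega$ and a compact neighbourhood of $K$. The only non-combinatorial input we need in addition is the following, which is part of the estimates of \cite{NSW} and is re-derived in \cite{Mo00}: there is $c_K\geq1$ so that, whenever $I\in\{1,\ldots,q\}^n$ satisfies \eqref{lambdacond} for $x\in K$ and $0<\delta<\delta_0$, the map $h\mapsto E_{I,x}(h)$ is injective on $\{h\in\R^n:\|h\|_I<a\delta\}$ and its Jacobian satisfies $c_K^{-1}|\lambda_I(x)|\leq |J E_{I,x}(h)|\leq c_K|\lambda_I(x)|$ there. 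At $h=0$ this is an identity, since each $\eap^{hY_{i_k}}$ displaces a point by $h\,Y_{i_k}(\cdot)$ to first order (this is the reason for the $l$-th roots in \eqref{almostexp}); the content is that it persists throughout the whole box, uniformly in $x$ and $\delta$. We also record two trivialities: $\{h:\|h\|_I<a\delta\}=\{h:|h_j|<(a\delta)^{d_{i_j}}\text{ for all }j\}$ has measure $2^n a^{d(I)}\delta^{d(I)}$, which, since $1\leq d_{i_j}\leq r$, is comparable to $\delta^{d(I)}$; and, because $\{1,\ldots,q\}^n$ is finite, $\max_J|\lambda_J(x)|\delta^{d(J)}\leq\Lambda(x,\delta)\leq q^n\max_J|\lambda_J(x)|\delta^{d(J)}$, so any $I$ satisfying \eqref{lambdacond} has $|\lambda_I(x)|\delta^{d(I)}$ comparable to $\Lambda(x,\delta)$.

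For the lower bound, fix $x\in K$, take $0<\delta<\delta_0$, and pick $I$ satisfying \eqref{lambdacond} for $(x,\delta)$. By Theorem~\ref{bounds} we have $E_{I,x}(\{h:\|h\|_I<a\delta\})\subset B_{x,\delta}$, so by injectivity of $E_{I,x}$ on this box and the change of variables formula,
\[
|B_{x,\delta}|\;\geq\;\int_{\{\|h\|_I<a\delta\}}|J E_{I,x}(h)|\,dh\;\geq\;c_K^{-1}\,|\lambda_I(x)|\,2^n a^{d(I)}\delta^{d(I)}\;\geq\;C_1\,\Lambda(x,\delta),
\]
with $C_1>0$ depending only on $K$ and $\cX$.

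For the upper bound, replace $\delta_0$ by $b\delta_0$, so that $\delta/b<\delta_0$ whenever $\delta<\delta_0$ in the new range. Given $x\in K$ and such a $\delta$, apply Theorem~\ref{bounds} at scale $\delta/b$ and pick $I'$ satisfying \eqref{lambdacond} for $(x,\delta/b)$; then $B_{x,\delta}=B_{x,b(\delta/b)}\subset E_{I',x}(\{h:\|h\|_{I'}<a\delta/b\})$. Since $E_{I',x}$ is Lipschitz, the area formula gives $|E_{I',x}(A)|\leq\int_A|J E_{I',x}|$, hence
\[
|B_{x,\delta}|\;\leq\;\int_{\{\|h\|_{I'}<a\delta/b\}}|J E_{I',x}(h)|\,dh\;\leq\;c_K\,|\lambda_{I'}(x)|\,2^n(a/b)^{d(I')}\delta^{d(I')}\;\leq\;C_2\,\Lambda(x,\delta),
\]
where we used $b<a$, $1\leq d(I')\leq nr$, and the trivial bound $|\lambda_{I'}(x)|\delta^{d(I')}\leq\Lambda(x,\delta)$. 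This is the right inequality on the common range $0<\delta<\delta_0$, and the proof is complete.

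The genuinely non-elementary point --- and the one I expect to be the main obstacle --- is the Jacobian estimate above: that $|J E_{I,x}|$ stays comparable to $|\lambda_I(x)|$ on the entire box $\{\|h\|_I<a\delta\}$, and that $E_{I,x}$ is injective there, uniformly over $x\in K$ and $0<\delta<\delta_0$. Establishing it requires quantitative control of the commutator vector fields $Y_{i_k}$ and of their behaviour along the flows used to build $\eap$, which is precisely the analytic core of \cite{NSW} (on which the ball--box inclusions of Theorem~\ref{bounds} also rest). Everything else is bookkeeping of the exponents $d(I)$ and the finiteness of the index set $\{1,\ldots,q\}^n$.
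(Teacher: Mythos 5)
The paper does not actually give a proof of Theorem~\ref{volume1}: the statement is quoted directly from Theorem~1 of Nagel, Stein and Wainger \cite{NSW}, with no further argument. You instead reconstruct the volume estimate from the ball--box inclusions of Theorem~\ref{bounds} together with a uniform two-sided Jacobian bound $|JE_{I,x}(h)|\asymp|\lambda_I(x)|$ on the box $\{h:\|h\|_I<a\delta\}$ and injectivity of $E_{I,x}$ there. Your bookkeeping is sound: the box has measure $2^n a^{d(I)}\delta^{d(I)}$; the maximality condition \eqref{lambdacond} together with the finiteness of $\{1,\ldots,q\}^n$ makes $|\lambda_I(x)|\delta^{d(I)}$ comparable to $\Lambda(x,\delta)$; and for the upper bound you correctly rescale $\delta\mapsto\delta/b$ and then use only the trivial bound $|\lambda_{I'}(x)|\delta^{d(I')}\leq\Lambda(x,\delta)$, so that the choice of $I'$ never needs to be optimal. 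You also identify the essential missing ingredient honestly: the Jacobian comparability, uniformly in $x$ and $\delta$ over the whole box, is precisely the analytic core of \cite{NSW}, and there it is established alongside --- not prior to --- both the ball--box inclusions and the volume estimate. So your derivation is correct but not independent of \cite{NSW}; what it buys is a transparent display of the logical link between Theorem~\ref{bounds} and Theorem~\ref{volume1} through the change-of-variables formula, which the paper's bare citation keeps hidden. Since the Jacobian estimate you rely on is not stated anywhere in the paper, incorporating your argument would require importing it explicitly, at which point the direct citation of \cite{NSW} that the paper uses is the leaner route.
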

The point of this theorem is that it gives the doubling property of metric balls, as pointed out in \cite{NSW}.
In fact, $\Lambda$ is a polynomial with respect to $\delta$, that only depends on the enumeration of
vector fields $Y_1,\ldots, Y_q$ on some fixed open bounded set $\Omega$. 
Thus, we have the following corollary.
%
%
%
%
\begin{Cor}\label{doubling}
For every compact set $K \subset \R^n$ there exist positive constants $C$ and $r_0$, depending on $K$,
such that for every $x \in K$ and every $0<r<r_0$, we have 
\[
|B_{x,2r}| \leq C\ |B_{x,r}|.
\]
\end{Cor}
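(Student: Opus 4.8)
The plan is to derive Corollary~\ref{doubling} directly from Theorem~\ref{volume1} together with the elementary observation that $\delta\mapsto\Lambda(x,\delta)$ is, for fixed $x$, a polynomial in $\delta$ with nonnegative coefficients, whose degree is bounded uniformly for $x$ in a fixed compact set. First I would fix a compact set $K\subset\R^n$ and apply Theorem~\ref{volume1} to obtain $\delta_0>0$ and constants $C_1,C_2>0$ (depending on $K$) such that $C_1\le |B_{x,\delta}|/\Lambda(x,\delta)\le C_2$ for all $x\in K$ and all $0<\delta<\delta_0$.

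Next I would set $r_0=\delta_0/2$, so that for $0<r<r_0$ both $r$ and $2r$ lie in $(0,\delta_0)$, and write
\begin{equation*}
|B_{x,2r}|\le C_2\,\Lambda(x,2r),\qquad |B_{x,r}|\ge C_1\,\Lambda(x,r).
\end{equation*}
The key point is then to bound $\Lambda(x,2r)$ by a constant multiple of $\Lambda(x,r)$. Since
\begin{equation*}
\Lambda(x,\delta)=\sum_{I\in\{1,\ldots,q\}^n}|\lambda_I(x)|\,\delta^{d(I)},
\end{equation*}
each exponent $d(I)$ satisfies $n\le d(I)\le nr$ where $r$ is the local spanning step of a fixed bounded open set containing $K$ (here $q$ and the spanning step come from covering $K$ by such an $\Omega$, using Remark~\ref{horm}). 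Hence for every multi-index $I$ we have $(2r)^{d(I)}=2^{d(I)}r^{d(I)}\le 2^{nr}\,r^{d(I)}$, and summing over $I$ yields $\Lambda(x,2r)\le 2^{nr}\,\Lambda(x,r)$ for all $x\in K$ and all $r>0$, with the factor $2^{nr}$ independent of $x$.

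Combining these inequalities gives
\begin{equation*}
|B_{x,2r}|\le C_2\,\Lambda(x,2r)\le C_2\,2^{nr}\,\Lambda(x,r)\le \frac{C_2}{C_1}\,2^{nr}\,|B_{x,r}|,
\end{equation*}
so the corollary holds with $C=2^{nr}C_2/C_1$ and $r_0=\delta_0/2$, both depending only on $K$ (through $\delta_0$, $C_1$, $C_2$ and the local spanning data of a bounded open neighbourhood of $K$). There is essentially no genuine obstacle here: the only point requiring a little care is the uniform bound on the polynomial degree $d(I)$, which is exactly guaranteed by the finiteness of the local spanning step from Remark~\ref{horm}; without fixing a bounded $\Omega\supset K$ one could not control the degree uniformly, but with it the estimate is immediate.
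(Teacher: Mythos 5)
Your argument is correct and is precisely the one the paper intends: Theorem~\ref{volume1} reduces the doubling inequality to comparing $\Lambda(x,2\delta)$ with $\Lambda(x,\delta)$, and since $\Lambda(x,\cdot)$ is a polynomial with nonnegative coefficients of degree at most $n$ times the local spanning step (uniformly over a bounded open $\Omega\supset K$), the factor $2^{d(I)}$ is bounded, giving the claim. The only blemish is a clash of notation, as you use $r$ simultaneously for the radius and for the local spanning step; renaming the latter (say $s$, so the factor is $2^{ns}$) would remove any ambiguity.
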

%
%
%
%
%
%
%
%
\section{Boundedness from above implies Lipschitz continuity}\label{Sect:BA}
%
%
%
%
%
%
%
%
%
%
This section is devoted to the proof of the local Lipschitz continuity of $\cX$-convex functions
that are locally bounded from above. Precisely, we will prove Theorem~\ref{LipReg}.

%
%
%
%
%
\begin{Lem} \label{lowbound}
Let $u : \Omega \rightarrow \R$ be a $\cX$-convex function on an open set $\Omega\subset\R^n$
and let $K$ be a compact set. Then there exist $\delta_0>0$, $0<b<1$ and an integer $\bar N$ only depending on $K$ and $\cX$ such that for every $x\in K$, there exists an integer $1\leq N_x\leq\bar N$ such that 
for every $0<\delta<\delta_0$ we have $D_{x,\bar N\delta_0}\subset\Omega$ and
\begin{equation}\label{NxB}
2^{N_x}\, u(x) - (2^{N_x}-1)\sup_{B_{x,\bar N\delta}} u \leq \inf_{B_{x,b\delta}} u\,.
\end{equation}
\end{Lem}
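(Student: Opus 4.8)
The plan is to exploit the one-dimensional convexity of $u$ along the flows that appear in the approximate exponential $G_{I,x}$ of Theorem~\ref{supset}, and iterate the defining convexity inequality of an $\cX$-convex function along each of these flows. Fix the compact set $K$ and take $\delta_0>0$, $a,b$ and $\bar N$ as provided by Theorem~\ref{supset} and Corollary~\ref{subset}, so that for every $x\in K$ and every $0<\delta<\delta_0$ there is a multi-index $I=I(x,\delta)$ with $|\lambda_I(x)|\delta^{d(I)}$ maximal (up to a factor $1/2$), and
\[
B_{x,b\delta}\subset G_{I,x}\big(S_{N(I),\delta}\big)\subset B_{x,\bar N\delta}\subset D_{x,\bar N\delta_0}\subset\Omega.
\]
Set $N_x=N(I(x,\delta))$; by \eqref{barN} we have $1\le N_x\le\bar N$. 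The key point is that $G_{I,x}$ is a composition of at most $N_x$ one-parameter flows $\exp(w_k X^{(k)})$ of horizontal vector fields, and along each such flow $u$ restricts to a convex function of the flow parameter.

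The main step is a one-step lemma: if $v$ is convex on an interval containing $0$ and $s$, then $2v(0)-v(s)\le v(0)+(v(0)-v(s))\le v(-s)$ whenever $-s$ is also in the interval; more symmetrically, for $X\in\cX$ and $w\in\R$ small, convexity of $t\mapsto u(\Phi^X_t(p))$ gives $u(p)\le\tfrac12\big(u(\Phi^X_w(p))+u(\Phi^X_{-w}(p))\big)$, hence
\[
2u(p)-\sup_{q\in B_{p,|w|}}u\le u(\Phi^X_{-w}(p)).
\]
One then iterates this along the chain of flows building $G_{I,x}$: starting from a point $z\in B_{x,b\delta}$, write $z=G_{I,x}(w)$ with $\|w\|_{N_x}<\delta$, and peel off the flows one at a time, from the outermost to the innermost, picking up a factor $2$ in front of $u(x)$ and a term $-\sup$ at each of the $N_x$ steps. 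Because each intermediate point lies in $B_{x,\bar N\delta}$ (by the nested inclusions of Corollary~\ref{subset}), every supremum encountered is bounded by $\sup_{B_{x,\bar N\delta}}u$. Collecting the $N_x$ inequalities telescopically yields
\[
2^{N_x}u(x)-(2^{N_x}-1)\sup_{B_{x,\bar N\delta}}u\le u(z),
\]
and taking the infimum over $z\in B_{x,b\delta}$ gives \eqref{NxB}. Here one uses $2^{N_x}-1=\sum_{j=0}^{N_x-1}2^j$, which is exactly the coefficient produced by iterating $2u(p)-S\le u(\text{next point})$.

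The main obstacle is bookkeeping rather than conceptual: one must check that, at each stage of peeling off a flow, the base point of the \emph{remaining} composition still lies in a ball on which $u$ is (finite and) bounded above, so that the convexity inequality and the passage to $\sup$ are legitimate; this is where the nested inclusions $G_{I,x}(S_{N(I),\delta})\subset B_{x,\bar N\delta}\subset\Omega$ of Corollary~\ref{subset} and Theorem~\ref{supset}, together with local boundedness from above of $u$, are used. A secondary technical point is that $N_x$ depends a priori on $\delta$ through $I(x,\delta)$; but since $N(I)$ is controlled solely by the local spanning step $r$ via \eqref{barN}, the bound $1\le N_x\le\bar N$ is uniform, and one may if desired fix $\delta$ small and freeze the choice of $I$, noting the inequality \eqref{NxB} is then stable for all smaller $\delta$ by the same chain of flows with rescaled parameters. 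With these points verified the proof is complete.
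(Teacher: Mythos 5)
Your proposal is correct and follows essentially the same approach as the paper: both rest on the representation $B_{x,b\delta}\subset G_{I,x}(S_{N(I),a\delta})\subset B_{x,\bar N\delta}$ from Theorem~\ref{supset} and iterate the one-dimensional convexity doubling inequality along the chain of horizontal flows composing $G_{I,x}$, telescoping to produce the coefficients $2^{N_x}$ and $2^{N_x}-1$. The paper builds the lower bound inductively from the center outward (setting $\mu_j=2\mu_{j-1}-\sup_{B_{x,\bar N\delta}}u$ and showing $\mu_j\le u\circ G_{I,x}(w_1,\ldots,w_j,0,\ldots,0)$) whereas you peel the flows off $z$ from the outermost inward, but this is the same telescope run in reverse; note only that the radius supplied by Theorem~\ref{supset} is $a\delta$ rather than $\delta$, and that local boundedness from above is not actually needed for this lemma since the inequality becomes vacuous when $\sup_{B_{x,\bar N\delta}}u=+\infty$.
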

\begin{proof}
Let $\Omega'$ be an open bounded set containing $K$ such that $\overline{\Omega'}\subset\Omega$,
let $r$ be the local spanning step and $q$ be the local spanning number with local spanning frame $Y_1,\ldots,Y_q$ on $\Omega'$. We apply Theorem~\ref{supset} to both $K$ and $\Omega'$,
getting an integer $\bar N$ and positive number $\delta_0>0$, $0<b<a<1$, depending on $K$,
$\Omega'$ and $\cX$, having the properties stated in this theorem.
Thus, we choose any $x\in K$ and $0<\delta<\delta_0$, so that we can find a multi-index
$I\in\{1,\ldots,q\}^n$ such that \eqref{lamI} holds. Theorem~\ref{supset} implies that
\[
B_{x,b\delta}\subset G_{I,x}(S_{N(I),a\delta})\subset B_{x,\bar N\delta}
\subset D_{x,\bar N\delta_0}\subset\Omega'
\]
where $G_{I,x}$ is defined in \eqref{GI}. In particular, the closure 
$\overline{ S}_{N(I),x}$ satisfies 
\[
G_{I,x}(\overline{ S}_{N(I),a\delta})\subset\Omega'.
\]
Let us consider the scalar function $\varphi(w)=u\circ G_{I,x}(w)$, that is well defined for all 
$w\in\overline{S}_{N(I),a\delta}$. By definition of $\cX$-convexity, we have
$$
\mu_1=2\varphi(0) - \sup_{B_{x,\bar N\delta}}u \leq 2\varphi(0) - \varphi(-w_1,0,\ldots,0) 
\leq \varphi(w_1,0,\ldots,0)\,,
$$
whenever $|w_1|\leq a\delta$. Notice that $\mu_1=2\, u(x)-\sup_{B_{x,\bar N\delta}}u$. Of course,
in the case $\sup_{B_{x,\bar N\delta}}u=+\infty$, then the inequalities \eqref{muN1} become trivial.
For each $w_1\in[-a\delta,a\delta]$, the function 
\[
[-a\delta,a\delta]\ni s \mapsto \ph(w_1,s,0,\ldots,0),
\]
is convex with respect to $s$, hence arguing as before we get 
\[
\mu_2 = 2\mu_1 - \sup_{B_{x,\bar N\delta}}u \leq 
\ph(w_1,s,0,\ldots,0)).
\]
whenever $|s|\leq a\delta$. We can repeat this argument up to $N(I)$ times, achieving
\begin{equation}\label{muN}
\mu_{N(I)} \leq u\circ G_{I,x_0}(w) \quad \mbox{for every}\quad w\in\overline{S}_{N(I),a\delta},
\end{equation}
where  $\mu_j = 2 \mu_{j-1} - \sup_{B_{x,\bar N\delta}}u$ for $j=1,\ldots,N(I)$. 
In particular, we have
\[
\mu_{N(I)}=2^{N(I)}u(x)-\Big(\sum_{j=0}^{N(I)-1}2^j\Big)M=2^{N(I)}u(x)-2^{N(I)}M+M
\]
with $M=\sup_{B_{x,\bar N\delta}}u$. In sum, we have proved that there exist $\delta_0>0$, $0<b<1$ and an integer $\bar N$ only depending on $K$ and $\cX$ such that for every $x\in K$, we can provide an integer $1\leq N_x\leq\bar N$, depending on $x$, 
such that for every $0<\delta<\delta_0$ we have $D_{x,\bar N\delta_0}\subset\Omega$ and 
\eqref{NxB} holds.
\end{proof}
\begin{Cor}\label{bddX} 
Under the assumptions of Lemma~\ref{lowbound}, we have
\begin{equation}\label{muN1}
\inf_{B_{x,b\delta}} u\geq\left\{\begin{array}{lll}
2\, u(x) - (2^{\bar N}-1)\sup_{B_{x,\bar N\delta}} u & \mbox{\rm if} & u(x)\geq0 \\
2^{\bar N}\, u(x) - (2^{\bar N}-1)\sup_{B_{x,\bar N\delta}} u & \mbox{\rm if} & u(x)<0\ \mbox{\rm and}\ \sup_{B_{x,\bar N\delta}}u\geq0\\
2^{\bar N}\, u(x) - \sup_{B_{x,\bar N\delta}} u & \mbox{\rm if} & \sup_{B_{x,\bar N\delta}}u<0
\end{array}\right.\,.
\end{equation}
\end{Cor}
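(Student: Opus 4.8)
The plan is to derive Corollary~\ref{bddX} as a direct consequence of Lemma~\ref{lowbound} by applying inequality \eqref{NxB} and then exploiting the monotonicity of the map $N\mapsto 2^{N}u(x)-(2^{N}-1)\sup_{B_{x,\bar N\delta}}u$ as a function of the integer $N$, in order to replace the point-dependent integer $N_x$ by the uniform bound $\bar N$ (or by $1$ where that is more favourable). The key observation is that if we write $T(N)=2^{N}u(x)-(2^{N}-1)M$ with $M=\sup_{B_{x,\bar N\delta}}u$, then $T(N+1)-T(N)=2^{N}\bigl(u(x)-M\bigr)$, so $T$ is nonincreasing in $N$ precisely when $u(x)\leq M$ and nondecreasing when $u(x)\geq M$. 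Since $1\leq N_x\leq\bar N$ from Lemma~\ref{lowbound}, in the regime $u(x)\leq M$ we get $T(\bar N)\leq T(N_x)\leq\inf_{B_{x,b\delta}}u$, while in the regime $u(x)\geq M$ we get $T(1)\leq T(N_x)\leq\inf_{B_{x,b\delta}}u$.

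From here I would simply translate these two regimes into the three cases in the statement. First note $M=\sup_{B_{x,\bar N\delta}}u\geq u(x)$ always holds, because $x\in B_{x,\bar N\delta}$; hence the regime $u(x)\geq M$ forces $u(x)=M$, and in that case $T(1)=2u(x)-M=u(x)\leq\inf_{B_{x,b\delta}}u$ follows already from $T(\bar N)\leq T(N_x)$ as well, so there is no loss in always using the nonincreasing direction: $T(\bar N)\leq\inf_{B_{x,b\delta}}u$, i.e.
\[
\inf_{B_{x,b\delta}}u\geq 2^{\bar N}u(x)-(2^{\bar N}-1)M .
\]
This is already the second line of \eqref{muN1}. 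The refinement to the other two lines comes from keeping track of the sign of $u(x)$ and of $M$: when $u(x)\geq 0$ we additionally use that $(2^{\bar N}-2)u(x)\geq 0$ to weaken $2^{\bar N}u(x)$ down to $2u(x)$, obtaining the first line; when $M<0$ we use $(2^{\bar N}-2)M<0$, i.e. $-(2^{\bar N}-1)M\geq -M$, to sharpen the right-hand side to $2^{\bar N}u(x)-M$, obtaining the third line.

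Concretely, the write-up would go: fix $x\in K$ and $0<\delta<\delta_0$, and let $N_x$ and the constants $\bar N,b,\delta_0$ be as furnished by Lemma~\ref{lowbound}, so that \eqref{NxB} holds. Set $M=\sup_{B_{x,\bar N\delta}}u$. If $M=+\infty$ every inequality in \eqref{muN1} is vacuous, so assume $M<+\infty$; then $u(x)\leq M$ since $x\in B_{x,\bar N\delta}$. Writing $T(N)=2^{N}u(x)-(2^{N}-1)M$ and computing $T(N)-T(N_x)=(2^{N}-2^{N_x})(u(x)-M)$, the inequality $u(x)-M\leq 0$ together with $N_x\leq\bar N$ gives $T(\bar N)\leq T(N_x)$, whence by \eqref{NxB},
\[
\inf_{B_{x,b\delta}}u\ \geq\ 2^{N_x}u(x)-(2^{N_x}-1)M\ \geq\ 2^{\bar N}u(x)-(2^{\bar N}-1)M .
\]
If $u(x)\geq 0$, then $2^{\bar N}u(x)\geq 2u(x)$ and hence $\inf_{B_{x,b\delta}}u\geq 2u(x)-(2^{\bar N}-1)M$, the first case. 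If instead $M<0$, then $-(2^{\bar N}-1)M\geq -M$ and hence $\inf_{B_{x,b\delta}}u\geq 2^{\bar N}u(x)-M$, the third case. The remaining case $u(x)<0$ and $M\geq 0$ is exactly the displayed inequality above, which is the second case. This exhausts all possibilities and proves \eqref{muN1}.

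I do not expect a serious obstacle here: the corollary is a bookkeeping consequence of Lemma~\ref{lowbound}, and the only mild subtlety is to notice that $x\in B_{x,\bar N\delta}$ forces $u(x)\leq\sup_{B_{x,\bar N\delta}}u$, which is what makes the exponent-monotonicity argument go in the favourable direction regardless of the unknown value of $N_x$; handling the case $\sup_{B_{x,\bar N\delta}}u=+\infty$ separately (where everything is trivial) is the only other point to mention.
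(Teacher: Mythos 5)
Your proof is correct, and the paper itself states Corollary~\ref{bddX} without proof, as an immediate consequence of Lemma~\ref{lowbound}, so your derivation supplies exactly the missing bookkeeping. The linchpin you identify --- namely that $x\in B_{x,\bar N\delta}$ forces $u(x)\leq M:=\sup_{B_{x,\bar N\delta}}u$, so $T(N)=2^{N}u(x)-(2^{N}-1)M$ is nonincreasing in $N$ and $T(\bar N)\leq T(N_x)$ --- is correct and does all the work; the three displayed cases then follow from the sign observations you give (using $2^{\bar N}u(x)\geq 2u(x)$ when $u(x)\geq 0$, and $-(2^{\bar N}-1)M\geq -M$ when $M<0$). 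The treatment of $M=+\infty$ as vacuous is also the right disposal.

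A minor presentational remark: an equally direct route, which is probably what the case split in the statement reflects, is to bound the two terms in $2^{N_x}u(x)-(2^{N_x}-1)M$ separately using $1\leq N_x\leq\bar N$ together with the signs: when $u(x)\geq 0$ (hence $M\geq u(x)\geq 0$) one uses $2^{N_x}u(x)\geq 2u(x)$ and $(2^{N_x}-1)M\leq(2^{\bar N}-1)M$; when $u(x)<0$ and $M\geq 0$ one uses $2^{N_x}u(x)\geq 2^{\bar N}u(x)$ and the same bound on the $M$-term; when $M<0$ one uses $2^{N_x}u(x)\geq 2^{\bar N}u(x)$ and $(2^{N_x}-1)(-M)\geq -M$. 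This produces each of the three lines of \eqref{muN1} without first isolating the single intermediate bound $T(\bar N)$, but the two routes are equivalent and yours is if anything slightly cleaner.
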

%
%
%
%
The previous corollary immediately leads us to another consequence.
\begin{Cor}\label{locabbe}
Every $\cX$-convex function that is locally bounded from above on an open set is also locally bounded from below.
\end{Cor}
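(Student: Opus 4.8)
The plan is to deduce Corollary~\ref{locabbe} directly from Corollary~\ref{bddX} by a straightforward covering argument. Fix an open set $\Omega\subset\R^n$ and an $\cX$-convex function $u:\Omega\ra\R$ that is locally bounded from above. It suffices to show that every point $x_0\in\Omega$ has a neighbourhood on which $u$ is bounded from below, since then local boundedness from below on arbitrary compact subsets follows by the usual finite subcover. So fix $x_0\in\Omega$ and choose a compact set $K$ which is a closed metric ball (or any compact neighbourhood) around $x_0$ with $K\subset\Omega$; here I use that the CC-distance induces the Euclidean topology, so metric balls are genuine neighbourhoods.

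First I would apply Lemma~\ref{lowbound} to this $K$, obtaining the constants $\delta_0>0$, $0<b<1$ and the integer $\bar N$ (all depending only on $K$ and $\cX$) with $D_{x_0,\bar N\delta_0}\subset\Omega$, together with the three-case estimate of Corollary~\ref{bddX}. Now fix a single radius, say $\delta=\delta_0/2$, and set $r=b\delta$. Since $u$ is locally bounded from above and $D_{x_0,\bar N\delta_0}\subset\Omega$ is compact, the quantity $M:=\sup_{B_{x_0,\bar N\delta}}u$ is finite (this is where local boundedness from above is used). Then Corollary~\ref{bddX}, applied at the point $x_0$, gives in each of the three cases a finite lower bound for $\inf_{B_{x_0,r}}u$: explicitly,
\[
\inf_{B_{x_0,r}}u\;\geq\;\min\bigl\{\,2u(x_0)-(2^{\bar N}-1)M,\;\;2^{\bar N}u(x_0)-(2^{\bar N}-1)M,\;\;2^{\bar N}u(x_0)-M\,\bigr\}\,>\,-\infty\,,
\]
which is a finite number depending only on $u(x_0)$, $M$ and $\bar N$. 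Hence $u$ is bounded from below on the neighbourhood $B_{x_0,r}$ of $x_0$.

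Finally, to pass from this pointwise-neighbourhood statement to boundedness on an arbitrary compact $K'\subset\Omega$, cover $K'$ by finitely many such balls $B_{x_j,r_j}$, on each of which $u\geq c_j>-\infty$, and take $c=\min_j c_j$. I expect no real obstacle here: the only subtlety is making sure one applies Corollary~\ref{bddX} legitimately, i.e.\ that $K$ is chosen as a compact neighbourhood of the given point with $D_{x_0,\bar N\delta_0}\subset\Omega$ so that $M<\infty$, and that $B_{x_0,b\delta}$ is a genuine neighbourhood of $x_0$ — both of which follow from the hypotheses already in force. The argument is essentially a one-line consequence of the preceding corollary, so the "hard part" is really just the bookkeeping of the three cases, all of which produce finite bounds as soon as $M<\infty$.
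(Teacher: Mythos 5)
Your proof is correct and follows precisely the route the paper intends: the paper states Corollary~\ref{locabbe} as an immediate consequence of Corollary~\ref{bddX}, and you have simply filled in the bookkeeping (fixing a radius, using local boundedness from above to make $M=\sup_{B_{x_0,\bar N\delta}}u$ finite, reading off the three-case lower bound, then covering a compact set). No gaps; this is the same argument.
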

We use throughout the distance function $\dist_d(A,x)=\inf_{a\in A}d(a,x)$, with $A\subset\R^n$.
\begin{The}\label{LipReg}
Let $\cX=\{X_1,\ldots,X_m\}$ be a set of H\"ormander vector fields, let $\Omega\subset\R^n$ open and let $u : \Omega \rightarrow \R$ be a $\cX$-convex function that is locally bounded from above.
It follows that $u$ is locally Lipschitz continuous. More precisely, if $K\subset\Omega$ is compact and
$0<r<\mbox{\rm $\dist$}_d(K,\Omega^c)$, then for every $x,y\in K$ we have
\begin{equation}\label{LipEstd}
|u(x)-u(y)|\leq \frac{C}{r}\,d(x,y)\,\sup_{K_r}|u|\,,
\end{equation}
where $K_r=\{z\in\R^n:\mbox{\rm $\dist$}_d(K,z)\leq r\}\subset\Omega$ and $C>0$ only depends on $K$ and $\cX$.
\end{The}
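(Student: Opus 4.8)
The plan is to reduce the local Lipschitz estimate to a one-dimensional statement about convex functions along horizontal integral curves, exactly as in \cite{Mag06}, but using Morbidelli's covering of Carnot--Carathéodory balls by flows of horizontal vector fields \cite{Mo00} to replace the group-translation arguments. First I would fix a compact $K\subset\Omega$ and a radius $0<r<\dist_d(K,\Omega^c)$, so that $K_r\subset\Omega$ is compact; since $u$ is locally bounded from above, and since we will in fact prove that $u$ is locally Lipschitz, we may assume $\sup_{K_r}|u|=:M<\infty$ (the lower bound follows a posteriori, but one can also argue directly that $\cX$-convexity plus one-dimensional convexity forces local boundedness from below). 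The key elementary fact is this: if $\gamma:[-a,a]\to\Omega$ is an integral curve of a horizontal vector field with unit-speed parametrization (in the sense $\dot\gamma=\sum\alpha_iX_i\circ\gamma$ with the natural normalization), then $t\mapsto u(\gamma(t))$ is convex on $[-a,a]$, and a convex function $\varphi$ on an interval bounded above by $M$ and defined on a slightly larger interval satisfies a gradient bound of the form $|\varphi(t)-\varphi(s)|\le \frac{C}{a}\,|t-s|\,\sup|\varphi|$ on $[-a/2,a/2]$; this is the standard interior Lipschitz estimate for one-variable convex functions, and it is where the factor $1/r$ ultimately comes from.

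Next I would invoke the covering result of Morbidelli: there is $C_1>0$, depending only on $K$ and $\cX$, such that for $x\in K$ and small $\rho$ the ball $B_{x,\rho}$ can be reached from $x$ by a composition of at most $N$ flows of horizontal vector fields, each flow running for time comparable to $\rho$, with all intermediate points remaining in $K_r$; here $N$ depends on $n$ and the step of the H\"ormander system. Given $x,y\in K$, I would connect $x$ to $y$ by such a chain of horizontal flow segments whose total length is comparable to $d(x,y)$ — more precisely, first join $x$ and $y$ by an almost-length-minimizing horizontal curve, then approximate that curve by a controlled number of integral-curve pieces of the $X_i$ via the Chow--Rashevskii/ball-box machinery underlying \cite{Mo00}. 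Along each segment $\gamma_j$ of this chain, which is a reparametrized integral curve of a single horizontal vector field of length $\ell_j$ sitting inside a longer integral curve of length $\gtrsim r$ contained in $K_r$, the one-dimensional estimate above gives
\begin{equation}\label{onesegment}
|u(p_{j})-u(p_{j-1})|\le \frac{C_2}{r}\,\ell_j\,\sup_{K_r}|u|,
\end{equation}
where $p_{j-1},p_j$ are the endpoints of $\gamma_j$. Summing \eqref{onesegment} over $j=1,\dots,N$ and using $\sum_j\ell_j\le C_3\,d(x,y)$ yields \eqref{LipEstd} with $C=C_2C_3$ depending only on $K$ and $\cX$.

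The main obstacle is the quantitative step connecting $x$ to $y$: one must produce the chain of horizontal integral-curve segments with total length $O(d(x,y))$ \emph{and} with every segment extendable to an integral curve of length $\gtrsim r$ that stays in $K_r$, all with constants uniform over $x\in K$. This is precisely the content for which Morbidelli's theorem is designed, but turning the abstract covering into a polygonal horizontal path with length control, and checking that the "thickening" of each segment to length $r$ does not leave $K_r$, requires care — in particular the lengths $\ell_j$ must be bounded by $d(x,y)$ rather than merely by $r$, so one cannot just use a fixed covering but must adapt the chain to the pair $(x,y)$. A secondary technical point is that the integral curves in the definition of $\cX$-convexity use constant coefficients $\alpha_i$, while the curves arising from the CC-geometry are concatenations of such pieces; one must verify that the one-dimensional convexity is preserved under such concatenation (it is, piecewise, with possibly a downward kink at junctions, which only helps the one-sided estimates) and that the speed normalizations match the CC-distance so that the length bookkeeping is consistent. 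Once these geometric inputs are in place, the rest is the routine summation sketched above.
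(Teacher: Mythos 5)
Your reduction to one--dimensional convexity along flows of single horizontal vector fields, plus a chain argument, is exactly the strategy the paper uses; but the ``main obstacle'' you identify (producing a polygonal horizontal path from $x$ to $y$ with total time $O(d(x,y))$, each piece a flow of one $X_i$, extendable by a fixed amount inside $K_r$) is handled in the paper by a structural device that your sketch misses. The paper introduces, alongside $d$, the Franchi--Lanconelli distance $\rho$ of Definition~\ref{d_FP}, whose defining competitors are \emph{by definition} concatenations of flow segments of single vector fields $\pm X_{j_k}$. So if $\rho(x,y)<D$, a curve $\gamma\in\Gamma^c_{x,y}(\bar t)$ with $\bar t<\rho(x,y)+\ep$ \emph{is already} your desired chain, with $\sum_k (t_k-t_{k-1})=\bar t<\rho(x,y)+\ep$; each piece extends by a fixed time $\alpha$ and stays in $K_{D+\alpha}$ because $d(\Phi^{X_{j_k}}(z,t),z)\le|t|$. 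This gives the Lipschitz estimate with respect to $\rho$ with explicit constants, and the passage to $d$ is then a single citation of Proposition~\ref{equivrhod} ($\rho$ and $d$ are equivalent), which is precisely the NSW/Morbidelli input you were trying to re-derive in the form of a ``controlled number of integral-curve pieces.'' Your instinct to approximate an almost-minimizing $d$-geodesic (which has time-varying coefficients and hence does not restrict $u$ to a convex function) by piecewise flows is in effect re-proving $\rho\lesssim d$; isolating that fact as $\rho\sim d$ keeps the proof clean.

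Two smaller points. First, you should not defer the local boundedness from below ``a posteriori'': you need $M=\sup_{K_r}|u|<\infty$ before running the chain estimate, and the paper secures this first via Corollary~\ref{locabbe} (which in turn rests on Lemma~\ref{lowbound} and Morbidelli's covering Theorem~\ref{supset}); so Morbidelli's approximate exponentials are indeed used in this proof, but for the lower bound on $u$, not for producing the chain. Second, the concatenation point you raise as a ``secondary technical point'' is a non-issue once one works with $\rho$: one never needs convexity of $u$ along the full concatenated path, only along each single-vector-field piece, and each piece, after extension by $\alpha$, directly gives the difference-quotient bound $2M/\alpha$ for its endpoints.
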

\begin{proof}
First of all, from Corollary~\ref{locabbe} it follows that $u$ is locally bounded. 
Let us choose $0<D<\dist_d(K,\Omega^c)$ and consider the compact set
\[ K_D=\{z\in\R^n:\dist_d(K,z)\leq D\}\,,\]
that is clearly contained in $\Omega$. Choose any $\alpha>0$ such that 
$D+\alpha<\dist_d(K,\Omega^c)$. Therefore for every $x\in K_D$ and $X\in\cX$, we have
\[
\dist_d\big(K_D,\Phi^X(x,t)\big)\leq d(\Phi^X(x,t),x)\leq |t|\leq\alpha
\]
hence $\Phi^X(x,t)\in K_{D+\alpha}=\{z\in\R^n:\dist_d(K,z)\leq D+\alpha\}\subset\Omega$ for all $|t|\leq\alpha$. 
Hence $\Phi^X$ is defined on $K_D\times[-\alpha,\alpha]$ and it is contained in
the larger compact set $K_{D+\alpha}\subset\Omega$.
Let us fix $x,y \in K$ such that $\rho (x,y)< D$. Let $\ep>0$ be arbitrary chosen such that 
$\rho(x,y)+\ep<D$. Thus, by definition of $\rho$, there exists 
$\rho(x,y)<\bar t<\rho(x,y)+\ep$ and $\gamma\in\Gamma^c_{x,y}(\bar t)$ such that $t_0=0<t_1<\cdots<t_\nu=\bar t$ and
\begin{equation}\label{Phit_k}
\gamma(t)=\Phi^{X_{j_k}}\big(\gamma(t_{k-1}),t-t_{k-1}\big)
\end{equation}
for all $t\in[t_{k-1},t_k]$ and $k=1,\ldots,\nu$, where $1\leq j_1,\ldots,j_\nu\leq m$.
We have that 
\[ 
d(\gamma(t),x)\leq\rho(\gamma(t),x)\leq t\leq\bar t<D,
\]
therefore the whole curve $\gamma$ is contained in $K_D$ and any 
restriction $\gamma|_{[t_{k-1},t_k]}$ can be smoothly extended on $[t_{k-1}-\alpha,t_k+\alpha]$
preserving the same form \eqref{Phit_k}.
Since $u$ is locally bounded, we set 
\[ 
M=\sup_{w\in K_{D+\alpha}}|u(w)|<+\infty.
\]
As a result, the $\cX$-convexity of $u$ implies that the difference quotient
\[
\frac{|u(\gamma(t_k)) - u (\gamma(t_{k-1}))|}{|t_k - t_{k-1}|} 
\]
is not greater than the maximum between $\mbox{\scriptsize 
$|u\big(\Phi^{X_{j_k}}(\gamma(t_{k-1}),t_k+\alpha-t_{k-1})\big) - u (\gamma(t_k))|\, \alpha^{-1}$}$
and $|u\big(\Phi^{X_{j_k}}(\gamma(t_{k-1}),-\alpha)\big) - u (\gamma(t_{k-1}))|\alpha^{-1}$.
This yields proves that
\[
\frac{|u(\gamma(t_k)) - u (\gamma(t_{k-1}))|}{|t_k - t_{k-1}|} \leq \frac{2M}{\alpha}\,.
\]
It follows that
\[
|u(y) - u(x)| \leq \sum_{k=1}^\nu  |u(\gamma(t_k)) - u(\gamma(t_{k-1})| \leq \frac{2M}{T} 
\sum_{k=1}^\nu (t_k - t_{k-1})<\frac{2M}{\alpha}(\rho(x,y)+\ep)\,,
\]
with an arbitrary choice of $\ep>0$. In the case $\rho(x,y)\geq D$, we immediately have
$|u(x)-u(y)|\leq 2M\rho(x,y)/D$, that leads to the inequality
\[
|u(x)-u(y)|\leq \frac{2\rho(x,y)}{\min\{D,\alpha\}}\,\sup_{K_{D+\alpha}}|u|
\]
for every $x,y\in K$, where $D,\alpha>0$ satisfy $D+\alpha<\dist_d(K,\Omega^c)$.
Thus, we choose $r=2D=2\alpha<\dist_d(K,\Omega^c)$. By Proposition~\ref{equivrhod},
it follows that there exists a constant $C>0$, depending on $K$, such that
$4\rho(x,y)\leq C\,d(x,y)$ for all $x,y\in K$, hence concluding the proof.
\end{proof}
\begin{Rem}\rm
The Lipschitz estimate \eqref{LipEstd} restated with respect to the distance $\rho$
has only explicit constants. Precisely, under the assumptions of Theorem~\ref{LipReg} we have
\begin{equation*}
|u(x)-u(y)|\leq \frac{2\rho(x,y)}{\min\{\alpha_1,\alpha_2\}}\,\sup_{K_{\alpha_1+\alpha_2}}|u|\,.
\end{equation*}
\end{Rem}
%
%
%
%
%
%
%
%
\section{$\cL$-weak subsolutions and upper estimates}\label{Sect:WeakSUI}
%
%
%
%
%
%
%
%
The point of this section is to show that locally bounded above $\cX$-convex functions are 
$\cL$-weak subsolutions of \eqref{subLap}, where $\cX=\{X_1,\ldots,X_m\}$ is a family of H\"ormander vector fields. 
This will enable us to apply the following well known result.
\begin{The}\label{upest}
Let $\Omega \subset \R^n$ be an open bounded set, and let $\cX$ be a family of smooth 
H\"ormander vector fields and let $p>0$. Thus, there exists $r_0>0$, depending on $\Omega$ and $\cX$,
and there exists $\kappa\geq1$, depending on $p$, $\Omega$ and $\cX$, such that whenever
$u \in W_\cX^{1,2}(\Omega)$ is a weak $\cL$-subsolution to \eqref{subLap}, we have
\begin{equation}\label{moser}
\mbox{\rm ess}\!\sup_{B_{x,\frac{r}{2}}}  u  \leq \kappa\; \left(\medint_{B_{x,r}} |u(y)|^p dy \right)^\frac{1}{p},
\end{equation}
for every $x \in \Omega$ such that $0<r\leq \min\{r_0,\mbox{\em dist}(\Omega^c,x)\}$.
\end{The}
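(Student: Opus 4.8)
The plan is to invoke the by-now standard Moser iteration machinery for weak subsolutions of sub-Laplacians, which applies verbatim once one knows two structural facts about the Carnot--Carath\'eodory geometry at hand: the local doubling property of the Lebesgue measure with respect to the metric balls $B_{x,r}$, and a weak $(1,2)$-Poincar\'e inequality on those balls. The first of these is precisely Corollary~\ref{doubling}, which furnishes, for any compact $K$, constants $C$ and $r_0$ depending on $K$ so that $|B_{x,2r}|\leq C|B_{x,r}|$ for $x\in K$ and $0<r<r_0$; applying this on $\overline\Omega$ (or on a slightly enlarged compact neighbourhood of $\overline\Omega$) gives doubling uniformly in $x\in\Omega$ at scales below some $r_0=r_0(\Omega,\cX)$. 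The Poincar\'e inequality for H\"ormander vector fields is the classical result of Jerison, which in the form needed here states that there are constants $C_P>0$ and $c\geq 1$, depending on $\Omega$ and $\cX$, such that
\[
\medint_{B_{x,r}}\Bigl|f-\medint_{B_{x,r}}f\Bigr|^2\,dy\ \leq\ C_P\,r^2\,\medint_{B_{x,cr}}\sum_{j=1}^m|X_jf|^2\,dy
\]
for all $f\in W^{1,2}_\cX(B_{x,cr})$ and all admissible $x,r$.

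With these two ingredients in place, the proof proceeds as follows. First I would fix $x\in\Omega$ and a radius $0<r\leq\min\{r_0,\mathrm{dist}(\Omega^c,x)\}$, with $r_0$ shrunk if necessary so that all the dilated balls $B_{x,cr}$ appearing in the Poincar\'e inequality stay inside $\Omega$; since $d$ induces the Euclidean topology and the metric balls are relatively compact, this costs only a further reduction of $r_0$ depending on $\Omega$ and $\cX$. Next, the standard Caccioppoli (energy) estimate for an $\cL$-weak subsolution $u$: testing the subsolution inequality $\sum_i\int X_iu\,X_i^*\eta\,dx\geq 0$ with $\eta=\psi^2(u-k)_+$ for a cutoff $\psi$ and a level $k$, and absorbing, yields
\[
\int \psi^2\sum_{j=1}^m|X_j(u-k)_+|^2\,dx\ \leq\ C\int |X_H\psi|^2\,(u-k)_+^2\,dx ,
\]
where the constant $C$ is absolute (it depends only on $\sup|\mathrm{div}\,X_j|$ on $\Omega$, i.e.\ on $\Omega$ and $\cX$, via the adjoints $X_i^*=-X_i-\mathrm{div}\,X_i$). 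Combining this Caccioppoli inequality with the Poincar\'e inequality and doubling produces a reverse-H\"older / self-improving inequality for $(u-k)_+$ on nested balls; iterating over a geometric sequence of radii $r_j=\tfrac r2(1+2^{-j})$ and levels, in the manner of Moser (or of the De~Giorgi--Nash--Moser presentation in the metric setting, e.g.\ as in Saloff-Coste or Hajl\l asz--Koskela), gives
\[
\operatorname*{ess\,sup}_{B_{x,r/2}}u\ \leq\ \kappa_2\,\Bigl(\medint_{B_{x,r}}(u_+)^2\,dy\Bigr)^{1/2}\ \leq\ \kappa_2\,\Bigl(\medint_{B_{x,r}}|u|^2\,dy\Bigr)^{1/2},
\]
which is \eqref{moser} for the exponent $p=2$, with $\kappa_2$ depending only on the doubling constant, the Poincar\'e constants and $\cX$, hence on $\Omega$ and $\cX$.

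Finally, to pass from $p=2$ to an arbitrary $p>0$ I would use the standard interpolation trick: from the $p=2$ estimate applied on all intermediate balls $B_{x,\theta r}$, $\tfrac12\leq\theta\leq 1$, together with Young's inequality to absorb a power of $\operatorname*{ess\,sup}$, one obtains $\operatorname*{ess\,sup}_{B_{x,r/2}}u\leq \kappa_p\,(\medint_{B_{x,r}}|u|^p)^{1/p}$ for every $p\in(0,2]$, and for $p>2$ it follows immediately from the $p=2$ case by H\"older's inequality and doubling. This yields the stated dependence $\kappa=\kappa(p,\Omega,\cX)$ and $r_0=r_0(\Omega,\cX)$. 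The only point requiring care, and thus the main (though entirely routine) obstacle, is bookkeeping the dependence of all constants: one must check that doubling (Corollary~\ref{doubling}) and the Poincar\'e inequality hold with constants uniform over $x\in\Omega$ at a single scale $r_0$, which is where the compactness of $\overline\Omega$ and the reduction to a fixed bounded open set with finite local spanning step enter; everything else is the verbatim metric-space Moser iteration, so I would simply cite it.
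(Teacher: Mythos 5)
Your proposal is correct and matches the paper's intent: the paper itself gives no proof of Theorem~\ref{upest}, stating only that it ``follows the celebrated Moser iteration technique for weak solutions to elliptic equations in divergence form'' and citing \cite{Moser1960}, \cite{FL83}, \cite{Lu1992}, \cite{HKM1993}, \cite{CDG93}, \cite{HajKos2000}. Your sketch (Caccioppoli estimate from testing with $\psi^2(u-k)_+$, combined with the doubling property of Corollary~\ref{doubling} and Jerison's Poincar\'e inequality, fed into the metric-space De~Giorgi--Nash--Moser iteration, then the standard interpolation/absorption trick to lower the exponent from $2$ to any $p>0$) is precisely the standard argument the paper is invoking, with the same uniformity-of-constants bookkeeping over $\overline\Omega$ that the paper implicitly relies on.
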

The proof of this theorem is standard: it follows the celebrated Moser iteration technique for weak solutions to elliptic equations in divergence form \cite{Moser1960}, that applies to very general frameworks, including Carnot-Carath\'eodory spaces. There are several independent works in this area, so we limit ourselves to mention just a few of them, \cite{Lu1992}, \cite{HKM1993}, \cite{CDG93}. Further discussion of this topic can be found for instance in \cite{HajKos2000}.

In the proof of Theorem~\ref{issub}, we will use the following basic fact.
%
%
%
%
\begin{Lem}\label{divX}
Let $X$ be a vector field on $\R^n$, let $z \in \R^n$ be such that $X(z) \neq 0$
and let $\pi$ be a hyperplane of $\R^n$ transversal to $X(z)$ and passing through $z$.
There exists an open neighbourhood $A$ of $z$ in $\pi$, $\tau>0$ and an open neighbourhood $U$ of
$z$ in $\R^n$ such that the restriction of the flow $\Phi^X$ to $A\times(-\tau,\tau)$ is a diffeomorphism onto $U$. 
Moreover, for every fixed system of coordinates $(\xi_1,\ldots,\xi_{n-1})$ on $\pi$, denoting by $\phi$ the previous 
restriction with respect to these coordinates and by $J_\phi$ its Jacobian, we get
\begin{equation}\label{divJ}
\div X(x)   = \frac{\partial_t J_\phi}{J_\phi} \circ \phi^{-1}(x)\quad\mbox{for all}\quad x\in U.
\end{equation}
\end{Lem}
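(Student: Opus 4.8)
The plan is to set up a flow box around $z$ and then reduce the identity \eqref{divJ} to the classical formula for the evolution of the Jacobian of a flow. First I would invoke the flow-box theorem: since $X(z)\neq0$ and $\pi$ is transversal to $X(z)$ at $z$, the map $(\xi,t)\mapsto\Phi^X(\psi(\xi),t)$, where $\psi$ parametrizes a coordinate patch of $\pi$ near $z$ by $(\xi_1,\dots,\xi_{n-1})$, has at $(\xi(z),0)$ a Jacobian matrix whose first $n-1$ columns span $T_z\pi$ and whose last column is $X(z)\notin T_z\pi$; hence it is invertible there. By the inverse function theorem there exist an open neighbourhood $A$ of $z$ in $\pi$, a number $\tau>0$, and an open set $U\ni z$ in $\R^n$ such that $\phi:A\times(-\tau,\tau)\to U$ is a diffeomorphism. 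Shrinking $\tau$ if necessary so that the flow is defined on $A\times(-2\tau,2\tau)$ (possible by Lemma~\ref{flows}-type ODE arguments), we obtain the first assertion.

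For \eqref{divJ}, the key step is the transport identity for the Jacobian determinant. Write $\phi(\xi,t)=\Phi^X(\psi(\xi),t)$ and let $D\phi$ denote its full $n\times n$ differential in the variables $(\xi,t)$, so $J_\phi=\det D\phi$. Differentiating the ODE $\partial_t\phi=X\circ\phi$ in the spatial parameters $(\xi,t)$ gives $\partial_t(D\phi)=\big((DX)\circ\phi\big)\,D\phi$, the variational equation of the flow. By Jacobi's formula (Liouville's theorem), for any smooth family of matrices $M(t)$ with $M'(t)=B(t)M(t)$ one has $\partial_t\det M=\operatorname{tr}B(t)\,\det M$; applying this with $M=D\phi$ and $B=(DX)\circ\phi$ yields
\[
\partial_t J_\phi=\operatorname{tr}\big((DX)\circ\phi\big)\,J_\phi=\big(\div X\big)\circ\phi\cdot J_\phi\,,
\]
since $\operatorname{tr}DX=\div X$. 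As $J_\phi\neq0$ on $A\times(-\tau,\tau)$ (it is continuous and nonzero at $(\xi(z),0)$, and we may shrink the domain to keep it nonzero), we may divide to get $\div X\circ\phi=\partial_t J_\phi/J_\phi$, and composing with $\phi^{-1}$ gives \eqref{divJ} for all $x\in U$.

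The only mildly delicate point is bookkeeping: one must be careful that $\phi$ is a map of $n$ real variables $(\xi_1,\dots,\xi_{n-1},t)$ (the chart $\psi$ being fixed once and for all), so that $D\phi$ is genuinely $n\times n$ and the variational equation is differentiation in all $n$ arguments — the first $n-1$ initial-condition directions plus the time direction — which all satisfy the same linear equation $\partial_t(\cdot)=(DX)\circ\phi\,(\cdot)$ because $X$ is autonomous. Once this is set up correctly, the rest is the standard Liouville computation and requires no further argument; this is the step I expect to be routine rather than an obstacle, the flow-box construction being the only place where transversality of $\pi$ is used.
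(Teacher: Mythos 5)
The paper states Lemma~\ref{divX} without proof, treating it as a standard flow-box plus Liouville computation, so there is no argument of the authors to compare yours against. Your proof is correct and is exactly that canonical argument: transversality of $\pi$ to $X(z)$ makes $D\phi(\xi(z),0)$ invertible (hence a diffeomorphism via the inverse function theorem); the variational equation $\partial_t(D\phi)=(DX\circ\phi)\,D\phi$ holds for all $n$ columns, including the $t$-column since $\partial_t(\partial_t\phi)=\partial_t(X\circ\phi)=(DX\circ\phi)\,\partial_t\phi$; and Jacobi's formula then yields $\partial_t J_\phi=(\div X\circ\phi)\,J_\phi$, from which \eqref{divJ} follows by dividing by $J_\phi\neq0$ and composing with $\phi^{-1}$.
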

\begin{Rem}\label{notzero}\rm
From the definition of commutator and the fact that the family $\cX$ satisfies the H\"ormander condition, 
it is clear that for each $z \in \R^n$, there exists $X\in\cX$ such that $X(z) \neq 0$.
\end{Rem}
\begin{proof}[Proof of Theorem~\ref{issub}]
As observed in Remark~\ref{notzero}, since $\cX$ is a family of H\"ormander vector fields, 
we must have some $j_1\in\{1,2,\ldots,m\}$ such that $X_{j_1}(x_0) \neq 0$. 
Thus, for each $i=1,\ldots,m$, we define $Y_i = X_i$ if $X_i(x_0) \neq 0$ and $Y_i =X_i+X_{j_1}$ otherwise,
so that all $Y_i$ do not vanish on $x_0$. In view of Lemma~\ref{divX}, for each $i=1,\ldots,m$ we can find 
an open bounded neighbourhood $U_i$ of $x_0$, that is compactly contained in $\Omega$,
an open bounded set $A_i\subset\R^{n-1}$, $\tau_i>0$  
and a diffeomorphism $\phi_i:S_i\ra U_i$, with $S_i=A_i\times(-\tau_i,\tau_i)$, 
$\phi_i$ is the restriction of the flow of $Y_i$ and then it satisfies \eqref{divJ}.
We can find $\delta_0>0$ such that $B_{x_0,\delta_0}$ is compactly contained in $U_i$ for all $i=1,\ldots,m$.
Let us choose any $\varphi \in C^\infty_c(B_{x_0,\delta_0})$ with $\varphi\geq0$. 
Our claim follows if we prove that  
\begin{equation}\label{toprove}
\sum_{i = 1}^m \int_{B_{x_0,\delta_0}}  Y_iu(x)\ Y^*_i \varphi(x)\, dx \geq 0.
\end{equation}
We will prove a stronger fact, namely, the validity of
\[
\int_{B_{x_0,\delta_0}}  Y_iu(x)\ Y^*_i \varphi(x)\, dx \geq 0 \quad\mbox{for all}\quad i =1,\ldots,m\,.
\]
By definition of $\cX$-convexity, we have that $u(\phi_i(\omega,\cdot))$ is convex on the interval where it is
defined for all $i=1,\ldots,m$. By Theorem~\ref{LipReg}, $u$ is locally Lipschitz continuous with respect to 
$d$. Iterating Lemma~\ref{XbarX}, no more than $m-1$ times, and observing that $\cX_1=\{Y_1,\ldots,Y_m\}$ is 
also a family of H\"ormander vector fields, its associated distance $d_1$ is equivalent to $d$, that is 
obtained from $\cX$. Theorem \ref{existenceDer} and Proposition~\ref{dirDer} imply that 
$u \in W_{\cX,loc}^{1,\infty}(\Omega)$ and the pointwise derivative 
\[
\partial_{Y_i} u(x)  = \frac{d}{dt}u (\Phi^{Y_i}(x,t))|_{t = 0} 
\]
exists for almost every $x\in\Omega$ and coincides with the distributional derivative $Y_iu$,
up to a negligible set. In particular, there exists $L>0$ such that $|Y_iu| \leq L$ almost everywhere 
in $U_i$, where $Y_iu$ is the distributional derivative of $u$ along $Y_i$. 
Since $\phi_i$ sends negligible sets into negligible sets, we have that
\begin{equation}\label{Ydera}
\frac{\partial}{\partial s } u(\phi_i(\omega,s))|_{s=t} = \der_{Y_i}u(\phi(\omega,t))=Y_iu(\phi_i(\omega,t))
\end{equation}
for almost every $(\omega,t) \in S_i$. There exist $0<t_i<\tau_i$ such that $\phi(A_i\times(-t_i,t_i))=U_i'$
still contains $B_{x_0,\delta_0}$, hence for $\ep>0$ sufficiently small, we can consider
\[
(u\circ\phi_i)_\ep (\omega,t) = \int_{-\tau_i}^{\tau_i} (u\circ \phi_i)(\omega,s))\, \nu_\ep(t-s) ds,
\]
for all $t\in(-t_i,t_i)$, where $\nu_\ep$ are one dimensional mollifiers.
Since $(u\circ\phi_i)(\omega,\cdot)$ is convex on $(-\tau_i,\tau_i)$ it is also locally Lipschitz,
with distributional derivative. It follows that 
\begin{equation*}
\frac{\partial}{\partial t} (u\circ \phi_i)_\ep(\omega,t) = 
(\der_{Y_i}u\circ \phi_i)_\ep(\omega,t)
\end{equation*}
for all $\omega\in A_i$ and $t\in(-t_i,t_i)$. Due to \eqref{Ydera}, applying Fubini's theorem it follows
that for almost every $\omega\in A_i$ the pointwise derivative $\der_{Y_i}u(\omega,t)$ equals 
the distributional derivative $Y_iu(\omega,t)$ for almost every $t\in(-\tau_i,\tau_i)$,
that is precisely represented almost everywhere.
As a consequence, we have
\begin{equation}\label{poY}
\frac{\partial}{\partial t} (u\circ \phi_i)_\ep(\omega,t) = (\der_{Y_i}u\circ \phi_i)_\ep(\omega,t)=
\big((Y_iu)\circ\phi_i\big)_\ep(\omega,t)
\end{equation}
for almost every $\omega\in A_i$ and every $t\in(-t_i,t_i)$.
Since $(u\circ\phi)_\ep(\omega,\cdot)$ is smooth and convex for all $\omega\in A_i$, we achieve
\[
\int_{S_i'} \frac{\partial^2}{\partial t^2} (u\circ \phi_i)_\ep(\omega,t)\ \varphi(\phi(\omega,t)) J_{\phi_i}(\omega,t)
\,d\omega dt\geq 0
\]
where $S_i'=A_i\times(-t_i,t_i)$.
%
%
%
%
%
Integrating by parts, it follows that the previous nonnegative integral equals the following one
\[
-\int_{S_i'} \frac{\partial}{\partial t} (u\circ\phi_i)_\ep(\omega,t)\frac{\partial}{\partial t}\left \lbrace \varphi(\phi_i(\omega,t)) J_{\phi_i} \right \rbrace d\omega dt    
\]
that can be written as follows 
\[
-\int_{S_i'} \bigg(\frac{\partial}{\partial t} (u\circ\phi_i)_\ep(\omega,t) \frac{\partial}{\partial t} \varphi(\phi_i(\omega,t)) J_{\phi_i} + \frac{\partial}{\partial t} (u\circ\phi_i)_\ep(\omega,t)\, (\varphi\circ\phi_i)(\omega,t)) \frac{\partial}{\partial t} J_{\phi_i}\bigg)  d\omega dt
\]
Clearly, we have $\frac{\partial}{\partial t} ( \varphi \circ \phi_i)(\omega,t)  = (Y_i\varphi)(\phi_i(\omega,t))$,
hence by Lemma~\ref{divX}, we obtain
\[
-\int_{S_i'}\frac{\partial}{\partial t} (u\circ\phi_i)_\ep(\omega,t) 
\Big((Y_i\varphi)(\phi(\omega,t)) + (\mbox{div}Y_i \circ \phi_i)(\omega,t)\,
(\varphi\circ\phi_i)(\omega,t))\Big)  J_{\phi_i} d\omega dt \geq 0.
\]
%
%
We can then pass to the limit as $\ep\to 0^+$, taking into account that $Y_iu\in L^\infty(U_i)$ and 
that both \eqref{Ydera} and \eqref{poY} hold, getting
\[
-\int_{\phi_i^{-1}(U_i')} (Y_iu)\circ\phi_i\ \left \lbrace (Y_i\varphi\circ\phi_i + (\mbox{div}Y_i \circ \phi_i)\ \varphi\circ\phi_i\right \rbrace\  J_{\phi_i}\ d\omega dt\geq 0.
\]
By a change of variables towards the former coordinates, we obtain 
\[
\begin{aligned}
-\int_{U_i'} Y_i u(x) \left \lbrace (Y_i\varphi)(x) + \mbox{div}Y_i(x)\, \varphi(x)\right \rbrace dx  
= \int_{B_{x_0,\delta_0}} Y_i u(x)\ Y_i^* \varphi (x)\, dx \geq 0\,,
\end{aligned}
\]
that establishes our claim.
\end{proof}
As a consequence of both Theorem~\ref{upest} and Theorem~\ref{issub}, we get the following consequence.
\begin{Cor}\label{up}
Let $\Omega \subset \R^n$ be open and let $p>0$. If $x\in\Omega$, then there exist
$\sigma_x,\delta_x>0$ and $\kappa_x\geq1$, depending on $x$, $\Omega$, $p$ and $\cX$, such that $B_{x,\delta_x}\subset\Omega$, $\sigma_x\leq\delta_x/2$ and whenever $u:\Omega\ra\R$ is $\cX$-convex and locally bounded from above, for all $y\in B_{x,\delta_x/2}$ and $0<r\leq\sigma_x$, we have
\begin{equation}\label{upeq}
\sup_{B_{y,\frac{r}{2}}} u  \leq \kappa_x \left(\, \medint_{B_{y,r}} |u(z)|^p dz \right)^\frac{1}{p}.
\end{equation}
\end{Cor}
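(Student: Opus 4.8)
The plan is to obtain Corollary~\ref{up} by feeding Theorem~\ref{issub} into Theorem~\ref{upest}: Theorem~\ref{issub} turns the $\cX$-convex function into a weak subsolution of a genuine sub-Laplacian near $x$, and Theorem~\ref{upest} is precisely a local supremum/$L^p$-average estimate for such subsolutions. The subtle point is that the sub-Laplacian produced by Theorem~\ref{issub} is built from a modified frame $\cX_1=\{Y_1,\ldots,Y_m\}$, $Y_i=\sum_j a_{ij}X_j$ with $a_{ij}\in\{0,1\}$, so a naive application of Theorem~\ref{upest} yields metric balls of the distance $d_1$ attached to $\cX_1$ rather than of $d$.

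First I would fix $x\in\Omega$ and apply Theorem~\ref{issub} with $x_0=x$, obtaining $\delta_0>0$ and the frame $\cX_1$ with $B_{x,\delta_0}\subset\Omega$ and $u\in W^{1,2}_{\cX_1}(B_{x,\delta_0})$ a weak subsolution of $\sum_{i=1}^m Y_i^2 v=0$ there. As recorded in the proof of Theorem~\ref{issub} (iterate Lemma~\ref{XbarX} at most $m-1$ times), $\cX_1$ is again a H\"ormander family and its Carnot--Carath\'eodory distance $d_1$ is equivalent to $d$; and by Corollary~\ref{locabbe} together with Theorem~\ref{LipReg}, $u$ is locally bounded and continuous, so its supremum and essential supremum over any ball agree. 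Applying Theorem~\ref{upest} to the H\"ormander family $\cX_1$ on $B_{x,\delta_0}$ then produces $r_0>0$ and $\kappa\ge1$, depending on $p$, $\cX_1$ and $\delta_0$ — hence ultimately on $x,\Omega,p,\cX$ — such that $\sup_{\tilde B_{z,s/2}}u\le\kappa(\medint_{\tilde B_{z,s}}|u|^p)^{1/p}$ for $z\in B_{x,\delta_0}$ and admissible radii $s$, where $\tilde B$ denotes $d_1$-balls.

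Finally I would pick $\delta_x\le\delta_0$ and $\sigma_x\le\delta_x/2$ small enough that every ball centred at $y\in B_{x,\delta_x/2}$ with radius comparable to $r\le\sigma_x$ lies in $B_{x,\delta_0}$ with admissible radius, and transfer the estimate from $d_1$-balls to $d$-balls. I expect this transfer to be the main obstacle: since the equivalence constant between $d$ and $d_1$ need not be near $1$, a $d$-ball cannot simply be trapped between two concentric $d_1$-balls whose radii differ only by the factor $2$ appearing in the Moser estimate. The way around it is to note that the Moser iteration behind Theorem~\ref{upest} uses only the doubling property of Lebesgue measure and the Poincar\'e inequality for the horizontal gradient $(Y_1f,\ldots,Y_mf)$, and that both hold with respect to the balls of $d$: the doubling by Corollary~\ref{doubling}, and the Poincar\'e inequality because $d\sim d_1$ and $|Y_\cdot f|$ and $|X_\cdot f|$ are pointwise comparable, so that the standard Poincar\'e inequality for $\cX$ on $d$-balls transfers to $\cX_1$. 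Running the same iteration with $d$-balls then yields $\sup_{B_{y,r/2}}u\le\kappa_x(\medint_{B_{y,r}}|u(z)|^p\,dz)^{1/p}$ for all $y\in B_{x,\delta_x/2}$ and $0<r\le\sigma_x$, with $\kappa_x,\sigma_x,\delta_x$ depending only on $x,\Omega,p,\cX$, which is the claim; if instead one insists on using Theorem~\ref{upest} strictly as a black box, the same conclusion follows by combining its $d_1$-ball version with the equivalence $d\sim d_1$ and Corollary~\ref{doubling} to absorb the mismatch of radii into $\kappa_x$.
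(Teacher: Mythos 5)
Your proposal follows the same two-step plan as the paper: feed Theorem~\ref{issub} into Theorem~\ref{upest} and restrict the radius so that the Moser estimate is applicable on $B_{x,\delta_x}$. The paper's own proof is considerably terser --- it applies Theorem~\ref{upest} to $\cX_1$ on $B_{x,\delta_x}$ and simply writes \eqref{moser1} with balls $B_{y,r}$, never remarking that the balls produced by the theorem are a priori $d_1$-balls rather than $d$-balls --- so you have correctly identified a point the paper elides. Your primary resolution (Moser iteration uses only local doubling of Lebesgue measure and the Poincar\'e inequality for the $Y_i$-gradient, both of which hold with respect to $d$-balls because $d$ and $d_1$ are equivalent and $|Y_i f|$ is pointwise controlled by $|X_\cdot f|$) is the correct and honest way to close that gap, and is almost certainly what the authors tacitly intend when they cite \cite{Moser1960}, \cite{FL83}, etc.\ and call the proof of Theorem~\ref{upest} ``standard''.

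One caution about the fallback you offer at the end. Taking Theorem~\ref{upest} strictly as a black box with $d_1$-balls and then passing to $d$-balls via the equivalence and doubling does \emph{not} recover the corollary exactly as stated. The inclusions go $B^d_{y,r/2}\subset B^{d_1}_{y,cr/2}$ and $B^{d_1}_{y,cr}\subset B^d_{y,c^2r}$ with $c$ the equivalence constant, so one ends up with $\sup_{B^d_{y,r/2}}u\le\kappa'\bigl(\medint_{B^d_{y,c^2r}}|u|^p\bigr)^{1/p}$; doubling lets you compare the normalizing volumes $|B^d_{y,r}|$ and $|B^d_{y,c^2r}|$, but it cannot replace the integral over the larger ball $B^d_{y,c^2r}$ by the integral over the smaller ball $B^d_{y,r}$ --- that inequality goes the wrong way. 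So the black-box route yields the weaker statement with ratio $c^2$ in place of $2$ between the two radii. That weaker form is entirely adequate for the subsequent applications (Theorems~\ref{upK} and \ref{|estim|} only need \emph{some} fixed ratio), but it is not \eqref{upeq} verbatim; your first route is the one that actually proves the statement as written.
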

\begin{proof} Let $x\in\Omega$ and and consider the corrisponding $\delta_x>0$ given by Theorem~\ref{issub}, such that $B_{x,\delta_x}\subset\Omega$ and $u$ is a weak subsolution of \eqref{subLapY} where the vector fields $Y_j$ depend on $x$. In view of Theorem~\ref{upest} applied to the open bounded set $B_{x,\delta_x}$, we get some constants $\kappa_x\geq1$ and $r_x>0$, depending on $B_{x,\delta_x}$, $p$, and the vector fields $Y_j$, such that there holds 
\begin{equation}\label{moser1} \mbox{\rm ess}\sup_{B_{y,\frac{r}{2}}}  u  \leq \kappa_x\; \left(\medint_{B_{y,r}} |u(z)|^p dz \right)^\frac{1}{p}\,, 
\end{equation} 
for all $0<r\leq \min\{r_x,\dist(B_{x,\delta_x}^c,y)\}$. Since for all $y\in B_{x,\delta_x/2}$, we have 
\[
\dist(B_{x,\delta_x}^c,y)\geq\delta_x/2,
\]
setting $\sigma_x=\min\{r_x,\frac{\delta_x}{2}\}$, then \eqref{moser1} holds for all $0<r\leq \sigma_x$ and all $y\in B_{x,\delta_x/2}$. 
\end{proof}
\begin{Rem}\rm 
Notice that we do not need to use the essential supremum in \eqref{upeq},
since $\cX$-convex functions that are locally bounded from above are locally
Lipschitz continuous, due to Theorem~\ref{LipReg}.
\end{Rem}
As a consequence of Corollary~\ref{up}, we can easily establish the following result.
\begin{The}\label{upK}
Let $\Omega \subset \R^n$ be open, let $p>0$ and let $K\subset\Omega$ be compact. 
Then there exists $\sigma>0$ and $\kappa\geq1$, depending on $K$, $\Omega,$ $\cX$ and $p$, such that 
for every $\cX$-convex function $u:\Omega\ra\R$ that is locally bounded from above and for 
every $x\in K$, we have $B_{x,\sigma}\subset\Omega$ and there holds
\begin{equation}\label{upeqK}
\sup_{B_{x,\frac{r}{2}}} u  \leq \kappa \left(\, \medint_{B_{x,r}} |u(z)|^p dy \right)^\frac{1}{p}
\quad \mbox{for all}\quad 0<r\leq\sigma.
\end{equation}
\end{The}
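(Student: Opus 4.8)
The plan is to derive Theorem~\ref{upK} from Corollary~\ref{up} by a standard compactness argument, turning the pointwise data $(\sigma_x,\delta_x,\kappa_x)$ into uniform constants on the compact set $K$. First I would fix $p>0$ and, for each $x\in K$, invoke Corollary~\ref{up} to obtain $\delta_x,\sigma_x>0$ and $\kappa_x\geq1$ with $B_{x,\delta_x}\subset\Omega$, $\sigma_x\leq\delta_x/2$, and the estimate
\begin{equation*}
\sup_{B_{y,\frac{r}{2}}} u \leq \kappa_x\,\Big(\medint_{B_{y,r}}|u(z)|^p\,dz\Big)^{1/p}
\end{equation*}
for all $y\in B_{x,\delta_x/2}$ and $0<r\leq\sigma_x$. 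The family $\{B_{x,\delta_x/2}:x\in K\}$ is an open cover of $K$ (with respect to the $d$-topology, which is the Euclidean topology since $\cX$ is H\"ormander), so by compactness there are finitely many points $x_1,\ldots,x_\ell\in K$ with $K\subset\bigcup_{j=1}^\ell B_{x_j,\delta_{x_j}/2}$.

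Next I would set $\kappa=\max_{1\leq j\leq\ell}\kappa_{x_j}$ and $\sigma=\min_{1\leq j\leq\ell}\sigma_{x_j}>0$. Given any $x\in K$, it lies in some $B_{x_j,\delta_{x_j}/2}$, so applying the estimate from Corollary~\ref{up} at the center $x_j$ with the choice $y=x$ gives, for every $0<r\leq\sigma\leq\sigma_{x_j}$,
\begin{equation*}
\sup_{B_{x,\frac{r}{2}}} u \leq \kappa_{x_j}\,\Big(\medint_{B_{x,r}}|u(z)|^p\,dz\Big)^{1/p}\leq \kappa\,\Big(\medint_{B_{x,r}}|u(z)|^p\,dz\Big)^{1/p},
\end{equation*}
which is exactly \eqref{upeqK}. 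It remains to check $B_{x,\sigma}\subset\Omega$: since $x\in B_{x_j,\delta_{x_j}/2}$ and $\sigma\leq\sigma_{x_j}\leq\delta_{x_j}/2$, the triangle inequality for $d$ gives $B_{x,\sigma}\subset B_{x_j,\delta_{x_j}}\subset\Omega$. Finally, as in the Remark following Corollary~\ref{up}, the essential supremum may be replaced by the genuine supremum because Theorem~\ref{LipReg} guarantees that $u$ is (locally Lipschitz, hence) continuous.

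There is really no serious obstacle here; the statement is a routine upgrade from a pointwise-local assertion to one uniform on compacta, and the only mild point to be careful about is that the open sets $B_{x,\delta_x/2}$ in Corollary~\ref{up} are genuinely open and cover $K$ — which is immediate once one recalls that $d$ induces the Euclidean topology under the H\"ormander condition — and that the constants $\delta_x,\sigma_x,\kappa_x$ supplied there do not depend on the particular $\cX$-convex function $u$. Both facts are already built into Corollary~\ref{up}, so the compactness argument goes through verbatim.
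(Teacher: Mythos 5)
Your proof is correct and is exactly the routine compactness argument the paper has in mind (the paper simply asserts that Theorem~\ref{upK} follows from Corollary~\ref{up} ``easily'' and omits the details). Extracting a finite subcover $\{B_{x_j,\delta_{x_j}/2}\}$, taking $\kappa=\max_j\kappa_{x_j}$ and $\sigma=\min_j\sigma_{x_j}$, applying Corollary~\ref{up} at the covering center $x_j$ with $y=x$, and then checking $B_{x,\sigma}\subset B_{x_j,\delta_{x_j}}\subset\Omega$ via the triangle inequality and $\sigma\leq\sigma_{x_j}\leq\delta_{x_j}/2$ is precisely what is needed, and your remarks on openness of metric balls and on replacing the essential supremum by the genuine supremum are both accurate.
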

%
%
%
%
%
%
%
%
%
%
%
%
\section{Regularity estimates for $\cX$-convex functions}\label{Sect:IntEst}
%
%
%
%
%
%
%
%
In this section we combine the upper and lower estimates for $\cX$-convex functions, that give the proof
of Theorem~\ref{Thm:IE}.
\begin{The}\label{|estim|}
Let $\Omega\subset\R^n$ be open, let $K\subset\Omega$ be compact and let 
$u:\Omega\ra\R$ be a $\cX$-convex function that is locally bounded from above. 
Then there exists $C_0>0$,  $b_0>0$ and $N_0>1$, depending on $K$, such that
for every $x\in K$ there holds
\[
\sup_{B_{x,r}}|u|\leq C_0\,\medint_{B_{x,N_0r}}|u(z)|\,dz
\]
whenever $0<r<b_0$ and $K_0=\{z\in\R^n:\mbox{\rm dist}(K,z)\leq N_0\,	b_0\}\subset\Omega$.
\end{The}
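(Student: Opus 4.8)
The plan is to combine the local integral upper bound (Theorem~\ref{upK}) with the pointwise lower bound (Lemma~\ref{lowbound} and Corollary~\ref{bddX}), and to cover the compact set $K$ by finitely many balls on which all the relevant constants are uniform. First I would fix an open bounded set $\Omega'$ with $K\subset\Omega'$ and $\overline{\Omega'}\subset\Omega$, and apply Lemma~\ref{lowbound} to $K$ and $\Omega'$, obtaining $\delta_0>0$, $0<b<1$ and an integer $\bar N$, all depending only on $K$ and $\cX$, such that for each $x\in K$ there is an integer $1\leq N_x\leq\bar N$ with $D_{x,\bar N\delta_0}\subset\Omega'$ and
\[
2^{N_x}\,u(x)-(2^{N_x}-1)\sup_{B_{x,\bar N\delta}}u\leq\inf_{B_{x,b\delta}}u
\]
for all $0<\delta<\delta_0$. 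Simultaneously I would apply Theorem~\ref{upK} (with, say, $p=1$) to $K$ and $\Omega$, obtaining $\sigma>0$ and $\kappa\geq1$, depending only on $K$, $\Omega$, $\cX$, such that $\sup_{B_{x,\rho/2}}u\leq\kappa\,\medint_{B_{x,\rho}}|u|$ for all $x\in K$ and $0<\rho\leq\sigma$.

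Next I would choose the parameters relating the two radii. Set $N_0=2\bar N/b$ (so that $B_{x,\bar N\delta}\subset B_{x,(b/2)N_0\delta}$, which is the ball whose half-radius version appears in the upper estimate) and pick $b_0>0$ small enough that $(b/2)N_0 b_0 = \bar N b_0\leq\min\{\sigma,\delta_0\}$ and that $K_0=\{z:\dist(K,z)\leq N_0 b_0\}\subset\Omega$; all of this depends only on $K$, $\Omega$, $\cX$. Now fix $x\in K$ and $0<r<b_0$, and write $r=b\delta/2$, i.e. $\delta=2r/b<\delta_0$. The upper estimate applied on the ball $B_{x,\bar N\delta}$ (whose radius $\bar N\delta\leq\bar Nb_0\leq\sigma$, and which is the half-radius of $B_{x,2\bar N\delta}\subseteq B_{x,N_0 r}$ after enlarging $N_0$ slightly if needed, say $N_0=2\bar N\cdot 2/b=4\bar N/b$) gives
\[
\sup_{B_{x,\bar N\delta}}u\leq\kappa\,\medint_{B_{x,2\bar N\delta}}|u(z)|\,dz\leq\kappa\,\medint_{B_{x,N_0 r}}|u(z)|\,dz=:\kappa A,
\]
where $A$ denotes the average on the right. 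Since $B_{x,r}=B_{x,b\delta/2}\subset B_{x,b\delta}$, the quantity $\sup_{B_{x,r}}u$ is at most $\sup_{B_{x,\bar N\delta}}u\leq\kappa A$, which controls $u$ from above on $B_{x,r}$.

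For the lower bound on $B_{x,r}\subset B_{x,b\delta}$, I would feed the estimate $\sup_{B_{x,\bar N\delta}}u\leq\kappa A$ into the three cases of Corollary~\ref{bddX}. In each case $\inf_{B_{x,b\delta}}u$ is bounded below by a fixed linear combination of $u(x)$ and $\sup_{B_{x,\bar N\delta}}u$ with coefficients depending only on $\bar N$; using $|u(x)|\leq\sup_{B_{x,r}}u\leq\kappa A$ (from the previous paragraph, since $x\in B_{x,r}$) together with $\sup_{B_{x,\bar N\delta}}u\leq\kappa A$ and the trivial bound $A\leq\medint_{B_{x,N_0r}}|u|=A$, one gets $\inf_{B_{x,b\delta}}u\geq -c(\bar N)\,\kappa A$ for a constant $c(\bar N)\leq 2^{\bar N}$. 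Hence $-u\leq c(\bar N)\kappa A$ on $B_{x,b\delta}\supset B_{x,r}$ as well. Combining the two one-sided bounds yields $\sup_{B_{x,r}}|u|\leq C_0\,\medint_{B_{x,N_0r}}|u(z)|\,dz$ with $C_0=(c(\bar N)+1)\kappa$ (and $N_0$, $b_0$ as chosen), all depending only on $K$ (and $\Omega$, $\cX$), which is the assertion. The only mildly delicate point is bookkeeping the radii so that every ball appearing in an application of Theorem~\ref{upK}, Lemma~\ref{lowbound} or Corollary~\ref{bddX} stays within the allowed range and inside $\Omega$; this is handled by the single choice of $N_0$ and $b_0$ above, and I would simply state the final $N_0$ (e.g.\ $N_0=4\bar N/b$) and shrink $b_0$ once at the end.
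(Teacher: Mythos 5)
Your overall strategy — combine the upper estimate of Theorem~\ref{upK} with the lower estimate of Lemma~\ref{lowbound}/Corollary~\ref{bddX}, adjusting the radii so that all applications stay within the allowed ranges — is the same as the paper's, and your bookkeeping of $N_0$ and $b_0$ is fine in spirit. But there is a genuine gap in the step where you control $u(x)$ from below. You write
\[
|u(x)|\leq\sup_{B_{x,r}}u\leq\kappa A,
\]
and feed this into the lower bound $\inf u\geq 2^{N_x}u(x)-(2^{N_x}-1)\sup u$. The inequality $|u(x)|\leq\sup_{B_{x,r}}u$ is false: only $u(x)\leq\sup_{B_{x,r}}u$ holds, and if $u(x)$ is very negative, $\sup_{B_{x,r}}u$ tells you nothing about $|u(x)|$. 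What is actually needed is a \emph{lower} bound on $u(x)$ (or $u$ evaluated at the base point of the lemma), since $u(x)$ enters the lower estimate with a positive coefficient $2^{N_x}$; Theorem~\ref{upK} only yields an upper bound on $u$, so it cannot furnish this.

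The paper circumvents this by not applying Lemma~\ref{lowbound} at the center $x$. Instead, given $\delta'>0$, it first observes that there must exist a point $x'\in B_{x,\delta'}$ with $u(x')\geq-\medint_{B_{x,\delta'}}|u(z)|\,dz$ (otherwise $u<-\medint|u|$ everywhere on the ball, contradicting the definition of the average). It then applies the lower estimate at $x'$, using $B_{x,\delta'}\subset B_{x',2\delta'}$ to pass the infimum, so that the term $2^{N_{x'}}u(x')$ is bounded below by $-2^{\bar N}\medint_{B_{x,\delta'}}|u|$, while $\sup_{B_{x',\bar N\cdot 2\delta'/b}}u$ is bounded above by the integral average via Theorem~\ref{upK}. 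Corollary~\ref{doubling} then converts the average over the small ball $B_{x,\delta'}$ into an average over the large ball, giving the two-sided estimate. To repair your argument you would need to insert exactly this auxiliary-point device; without it, the lower bound does not follow from the ingredients you have assembled.
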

\begin{proof}
By Lemma~\ref{lowbound}, we have $\delta_0>0$, $0<b<1$ and a positive integer $\bar N$ such that
for every $y\in K$, we have $D_{y,\bar N\delta_0}\subset\Omega$ and there exists with 
$1\leq N_y\leq\bar N$ such that
\begin{equation}\label{Nbarx}
2^{N_y}\, u(y) - (2^{N_y}-1)\sup_{B_{y,\bar N\delta}} u \leq \inf_{B_{y,b\delta}} u
\end{equation}
for all $0<\delta<\delta_0$. Let us consider $x\in K$ and any $0<\delta'<b\delta_0/4$,
observing that there exists $x'\in B_{x,\delta'}$ such that
\[
u(x')\geq-\,\medint_{B_{x,\delta'}}|u(z)|\,dz\,.
\]
We clearly have $\inf_{B_{x,\delta'}} u\geq \inf_{B_{x',2\delta'}}u$, hence for some
$1\leq N_{x'}\leq\bar N$, we can apply the estimate \eqref{Nbarx} at $x'$, getting
\[
\inf_{B_{x,\delta'}} u\geq 2^{N_{x'}}u(x')-(2^{N_{x'}}-1)\sup_{B_{x',\bar N\frac{2\delta'}{b}}}u.
\]
From the previous inequalities, it follows that
\[
\inf_{B_{x,\delta'}} u\geq -2^{\bar N}\medint_{B_{x,\delta'}}|u(z)|\,dz-
(2^{N_{x'}}-1)\sup_{B_{x,\bar N\frac{4\delta'}{b}}}u.
\]
Theorem~\ref{upK} provides $\sigma>0$ and $\kappa\geq1$ such that, up to choose $\delta_0>0$
possibly smaller, such that $\bar N\delta_0<\sigma/2$, hence $\bar N\frac{8\delta'}{b}<\sigma$
and it follows that
\[
\inf_{B_{x,\delta'}} u\geq -2^{\bar N}\medint_{B_{x,\delta'}}|u(z)|\,dz-
(2^{\bar N}-1)\,\kappa\, \medint_{B_{x,\bar N\frac{8\delta'}{b}}}|u(z)|\,dz.
\]
As a consequence of Corollary~\ref{doubling}, we have $Q_0$
$>0$ and $r_0>0$
such that
\[
|B_{x,\bar N\frac{8\delta'}{b}}|\leq 2^{Q_0}\,\Big(\bar N\frac{8}{b}\Big)^{Q_0}\,|B_{x,\delta'}|,
\]
up to making $\delta_0$ further smaller, namely, satisfying $2\bar N\delta_0<r_0$.
It follows that 
\[
\inf_{B_{x,\delta'}} u\geq -2^{\bar N} \bigg[\kappa+(16)^{Q_0}\Big(\frac{\bar N}{b}\Big)^{Q_0}\bigg] 
\;\medint_{B_{x,\bar N\frac{8\delta'}{b}}}|u(z)|\,dz
\]
and also
\[
\sup_{B_{x,\delta'}}u\leq \kappa\, 2^{Q_0}\,\Big(\bar N\frac{4}{b}\Big)^{Q_0}\,
\medint_{B_{x,\bar N\frac{8\delta'}{b}}}|u(z)|\,dz\,,
\]
that yield a constant $C_0>0$ depending on $K$, such that
\[
\sup_{B_{x,r}}|u|\leq C_0\,\medint_{B_{x,N_0r}}|u(z)|\,dz
\]
for every $0<r<b_0$ and every $x\in K$, with $b_0=b\delta_0/4$ and $N_0=\bar N\frac{8}{b}>1$.
By the previous requirements on $\delta_0$, being $N_0b_0=2\delta_0\bar N$,
we also have 
\[
K_0=\{z\in\R^n:\dist(K,z)\leq N_0b_0\}\subset\Omega,
\]
reaching the conclusion of the proof.
\end{proof}
\begin{The}\label{Thm:lambda}
Let $\Omega\subset\R^n$ be open, let $K\subset\Omega$ be compact and let $\lambda>1$. 
Then there exist $\bar C>0$ and $\bar Q>0$, depending on $K$ and there there exists
$\bar r>0$, depending on both $K$ and $\lambda$, such that for every 
$x\in K$ and every $0<r<\bar r$, each $\cX$-convex function $u:\Omega\ra\R$, that is locally bounded from above satisfies the following estimate
\begin{equation}\label{estlamb}
\sup_{B_{x,r}} |u|\leq \bar C\,\bigg(\frac{\lambda+1}{\lambda-1}\bigg)^{\bar Q}\ \medint_{B_{x,\lambda r}}|u(z)|\,dz\,.
\end{equation}
\end{The}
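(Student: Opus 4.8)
The plan is to reduce the estimate with the general dilation factor $\lambda>1$ to the already-established estimate of Theorem~\ref{|estim|}, which gives the bound with the specific factor $N_0>1$ (depending only on $K$). The point is that $N_0$ may be large, while $\lambda$ may be close to $1$; we compensate for a small $\lambda$ by iterating the basic estimate along a chain of overlapping balls and summing up the resulting geometric factors, which is where the doubling property (Corollary~\ref{doubling}) enters and produces the power $\big(\frac{\lambda+1}{\lambda-1}\big)^{\bar Q}$.

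First I would record a one-step covering estimate. Fix the compact set $K$ and enlarge it slightly to a compact $K'\subset\Omega$; apply Theorem~\ref{|estim|} to $K'$ to obtain $C_0>0$, $b_0>0$ and $N_0>1$ so that $\sup_{B_{y,s}}|u|\leq C_0\medint_{B_{y,N_0 s}}|u|$ for all $y\in K'$ and $0<s<b_0$. Then, given $x\in K$ and a small radius $r$, and given $\lambda>1$, cover $B_{x,r}$ by finitely many balls $B_{y_j,s}$ with $y_j\in B_{x,r}$ and $s=(\lambda-1)r/(2N_0)$ say, chosen so that $N_0 s\leq(\lambda-1)r/2$ and hence $B_{y_j,N_0 s}\subset B_{x,\lambda r/2+r}\subset B_{x,\lambda r}$ (adjusting constants as needed). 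Each ball $B_{y_j,N_0 s}$ is contained in $B_{x,\lambda r}$, so
\[
\sup_{B_{x,r}}|u|\leq\max_j\sup_{B_{y_j,s}}|u|\leq C_0\max_j\medint_{B_{y_j,N_0 s}}|u|\leq C_0\,\frac{|B_{x,\lambda r}|}{\min_j|B_{y_j,N_0 s}|}\medint_{B_{x,\lambda r}}|u|.
\]
It remains to bound the volume ratio. By Theorem~\ref{volume1} (or directly by Corollary~\ref{doubling} iterated $O(\log\frac{N_0}{\lambda-1})$ times), for $x,y_j$ in a fixed compact set and radii below a threshold $r_0=r_0(K)$ we get $|B_{x,\lambda r}|\leq |B_{y_j,2\lambda r}|\leq C\,\big(\tfrac{2\lambda r}{N_0 s}\big)^{Q_0}|B_{y_j,N_0 s}|$ with $Q_0$ the local doubling exponent on $K$, since $\Lambda(x,\cdot)$ is a polynomial of degree $\leq Q_0$. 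Now $\tfrac{2\lambda r}{N_0 s}$ is comparable to $\tfrac{\lambda}{\lambda-1}$ up to the constant $N_0$, and for $\lambda>1$ one has $\tfrac{\lambda}{\lambda-1}\leq\tfrac{\lambda+1}{\lambda-1}$; absorbing the $N_0$-dependent factor into $\bar C$ and setting $\bar Q=Q_0$ yields
\[
\sup_{B_{x,r}}|u|\leq\bar C\,\Big(\frac{\lambda+1}{\lambda-1}\Big)^{\bar Q}\,\medint_{B_{x,\lambda r}}|u(z)|\,dz,
\]
which is \eqref{estlamb}. The threshold $\bar r$ is chosen so that $\lambda r<b_0$ (to apply Theorem~\ref{|estim|}), $2\lambda r<r_0$ (to apply doubling), and $B_{x,\lambda r}\subset\Omega$; this is where the dependence of $\bar r$ on $\lambda$ as well as $K$ comes from, while $\bar C$ and $\bar Q$ depend only on $K$.

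The main technical point — and the only place requiring care — is the volume-ratio estimate: one must check that the number of doublings needed to pass from radius $N_0 s\sim\frac{\lambda-1}{\text{const}}\,r$ up to radius $\sim r$ is $\log_2\frac{1}{\lambda-1}+O(1)$, so that Corollary~\ref{doubling} gives a factor $C^{\log_2\frac{1}{\lambda-1}}=(\lambda-1)^{-\log_2 C}$, i.e. a power of $\frac{1}{\lambda-1}$ with exponent $\bar Q=\log_2 C$ independent of $\lambda$; alternatively, invoking Theorem~\ref{volume1} directly avoids the iteration, since $\Lambda(x,t)/\Lambda(x,t')\leq (t/t')^{Q_0}$ for $t\geq t'$ is immediate from $\Lambda(x,\cdot)$ being a polynomial with nonnegative coefficients of degree $Q_0$. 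A minor subtlety is to keep all center points $y_j$ and all the enlarged balls inside a single compact subset of $\Omega$ on which the constants $C_0,Q_0,r_0,b_0$ are uniform; this is handled by the usual trick of working on a slightly larger compact neighbourhood $K'$ of $K$, exactly as in the proof of Theorem~\ref{LipReg}.
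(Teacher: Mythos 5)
Your proposal is correct and follows essentially the same route as the paper: apply Theorem~\ref{|estim|} on a slightly enlarged compact set, cover the ball $B_{x,r}$ (the paper uses the compact closed ball $D_{x,r}$, which avoids any fuss about extracting a finite subcover) by balls of radius comparable to $(\lambda-1)r/N_0$, apply the fixed-factor estimate on each, and control the resulting volume ratio via the doubling property of Corollary~\ref{doubling}, which is where the $\big(\tfrac{\lambda+1}{\lambda-1}\big)^{\bar Q}$ factor comes from. The only cosmetic differences are the choice of small radius ($\tfrac{(\lambda-1)r}{2N_0}$ vs.\ the paper's $\tfrac{(\lambda-1)r}{N_1}$) and the intermediate comparison ball (you pass through $B_{y_j,2\lambda r}$ while the paper uses $B_{x_{j_0},(\lambda+1)r}$), neither of which affects the argument.
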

\begin{proof}
We fix any $\beta>0$ such that $K_1=\{z\in\R^n:\mbox{\rm dist}(K,z)\leq \beta\}\subset\Omega$
and apply Theorem~\ref{|estim|} to $K_1$, getting the corresponding positive constants $C_1,b_1$
and $N_1>1$. We have in particular 
\[
\{z\in\R^n:\mbox{\rm dist}(K_1,z)\leq N_1\,	b_1\}\subset\Omega.
\]
Taking $0<r<\beta/\lambda$, we have $B_{x,\lambda r}\subset K_1$ for all $x\in K$
and fixing $a=(\lambda-1)/N_1$, it follows that for $0<r<r_1$ and $r_1=\min\{b_1/a,\beta/\lambda\}$,
the following inequality
\[
\sup_{B_{y,ar}}|u|\leq C_1\,\medint_{B_{y,N_1ar}}|u(z)|\,dz
\]
holds for all $y\in K_1$. Now, let us fix $x\in K$. Thus, whenever $0<r<r_1$ we can cover 
the compact set $D_{x,r}$ with a finite number of balls $B_{x_j,ar}$ centered at points of $D_{x,r}$, 
hence there exists $x_{j_0}\in D_{x,r}$ such that
\[
\sup_{B_{x,r}} |u|\leq \sup_{B_{x_{j_0}},ar} |u|\,.
\] 
Since $x_{j_0}\in K_1$ and $ar<b_1$, Theorem~\ref{|estim|} implies that
\[
\sup_{B_{x_{j_0}},ar} |u|\leq C_1\,\medint_{B_{x_{j_0},N_1ar}}|u(z)|\,dz=
C_1\,\medint_{B_{x_{j_0},(\lambda-1) r}}|u(z)|\,dz\,.
\]
As a result, we have proved that 
\[
\sup_{B_{x,r}} |u|\leq C_1\,\frac{|B_{x,\lambda r}|}{|B_{x_{j_0},(\lambda-1) r}|}\
\medint_{B_{x,\lambda r}}|u(z)|\,dz
\leq C_1\,\frac{|B_{x_{j_0},(\lambda+1)r}|}{|B_{x_{j_0},(\lambda-1) r}|}\
\medint_{B_{x,\lambda r}}|u(z)|\,dz
\]
for all $0<r<r_1$, where $r_1$ also depends on $\lambda$. 
Finally, we apply Corollary~\ref{doubling} to $K_0$, getting $r_2>0$ and $\bar Q>0$ such that 
for all $0<r<\min\{r_1,r_2/\lambda+1\}$ our claim \eqref{estlamb} holds with $\bar C=C_1\,2^{\bar Q}$.
\end{proof}

\end{document}